\documentclass[reqno, 11pt]{amsart}
\usepackage{amssymb,amsmath,amsfonts}
\usepackage[margin=1in]{geometry}
\usepackage{dsfont}
\usepackage{color}
\usepackage{graphicx}
\usepackage{latexsym}
\usepackage{amsmath}
\usepackage{amssymb}
\usepackage{graphics}
\usepackage[dvips]{epsfig}
\usepackage{mathrsfs}
\usepackage{mathtools}
\usepackage{leqno}
\usepackage{stmaryrd,cite}

\usepackage{cases}
\usepackage{enumerate}
\usepackage[usenames,dvipsnames,svgnames]{xcolor}
\definecolor{refkey}{rgb}{1,0,0.5}
\definecolor{labelkey}{rgb}{0,0.4,1}
\usepackage{todonotes}
\usepackage{marginnote}

\presetkeys{todonotes}{fancyline, color=green!40}{}

\makeatletter
\renewcommand{\@todonotes@drawMarginNoteWithLine}{%
	\begin{tikzpicture}[remember picture, overlay, baseline=-0.75ex]%
	\node [coordinate] (inText) {};%
	\end{tikzpicture}%
	\marginnote[{
		\@todonotes@drawMarginNote%
		\@todonotes@drawLineToLeftMargin%
	}]{
		\@todonotes@drawMarginNote%
		\@todonotes@drawLineToRightMargin%
	}%
}
\makeatother
\usepackage{listings}
\usepackage{ifpdf}
\ifpdf
\usepackage[CJKbookmarks=true,
         hyperindex=true,
         pdfstartview=FitH,
         bookmarksnumbered=true,
         bookmarksopen=true,
         colorlinks=true,
         citecolor=blue,
         linkcolor=blue,
         urlcolor=blue,
         pdfborder=001,
         pdfauthor={},
         pdftitle={},
         pdfkeywords={},
         ]{hyperref}
\else
\usepackage[
         hypertex,
         hyperindex,
         linkcolor=blue,
         unicode,
         citecolor=blue%
         ]{hyperref}
\fi
\usepackage{cleveref}

\allowdisplaybreaks


\numberwithin{equation}{section}
\newtheorem{thm}{Theorem}[section]

\newtheorem{lem}[thm]{Lemma}
\newtheorem{prop}[thm]{Proposition}
\newtheorem{rmk}[thm]{Remark}



\newcommand{\be}{\begin{equation}}
\newcommand{\ee}{\end{equation}}

\newcommand{\bee}{\begin{equation*}}
\newcommand{\eee}{\end{equation*}}

\newcommand{\bse}{\begin{subequations}}
\newcommand{\ese}{\end{subequations}}

\newcommand{\bs}{\begin{split}}
\newcommand{\es}{\end{split}}


\begin{document}

\author{Hairong Liu$^{1}$}\thanks{$^{1}$School of Science, Nanjing Forestry University, Nanjing, China.
E-mail : hrliu@njfu.edu.cn}

\author{Hua Zhong$^{2}$}\thanks{$^{4}$Department of Mathematics, City University of Hong Kong, 83 Tat Chee Avenue, Kowloon Tong, Hong Kong.
E-mail: huazhong3-c@my.cityu.edu.hk}

\title[] {Global Solutions to the initial boundary problem of 3-D compressible Navier-Stokes-Poisson on  bounded domains}

\begin{abstract}

 The initial boundary value problems for compressible Navier-Stokes-Poisson  is considered on a bounded domain in $\mathbb{R}^3$ in this paper. The global existence  of smooth solutions near a given steady state for compressible Navier-Stokes-Poisson with physical boundary conditions is established with the exponential stability.  An important feature is that the steady state (except velocity) and the background profile are allowed to be of  large variation.

\noindent {\bf Keywords}: Global regularity near boundaries,   Navier-Stokes-Poisson systems,   Exponential stability,  The initial boundary value problem.\\
{\bf AMS Subject Classifications.} 35Q35, 35B40\end{abstract}

\maketitle

\section{Introduction}

It is well-known that the compressible Navier-Stokes-Poisson (NSP) system  consists of the Navier-Stokes equations coupled with the self-consistent Poisson equations, which is used in the simulation of the motion of charged particles (electrons or holes, see \cite{[23]} for more explanations). In three dimensional space, the NSP system of one carrier type takes the following form
\begin{eqnarray}{\label{prob1}}
\left\{
\begin{array}{llll}
\rho_{t}+\mbox{div}(\rho u)=0,\\[2mm]
\rho\left(u_{t}+(u\cdot\nabla)u\right)-\mu\Delta u-(\mu+\lambda)\nabla\mbox{div}u+\nabla p=\rho\nabla\Phi,\\[2mm]
\Delta\Phi=\rho-\bar{\rho},
\end{array}
\right.
\end{eqnarray}
where  $\rho>0$, $u=(u^1,u^2,u^3)$ and $p$ denote the density, the velocity field of charged particles, the pressure, respectively.
The self-consistent electric potential $\Phi=\Phi(x,t)$ is coupled with the density through the Poisson equation.
The pressure $p$ is expressed by
\begin{equation}{\label{pressure}}
p(\rho)=\rho^{\gamma},
\end{equation}
where $\gamma\geq1$ is a constant.  As usual, the constant viscosity coefficients $\mu$ and $\lambda$ should satisfy the following physical conditions
$$\mu>0,\,\,\,\lambda+\frac{2}{3}\mu\geq0.$$
And $\bar{\rho}=\bar{\rho}(x)>0$ is the background profile, the sum of the background ion density and the net density of impurities, which is assumed to be given and immobile.

The object of this paper is to investigate the global existence and long-time behavior of the solutions to the initial boundary value (IBV) problem  of (\ref{prob1}) in $(t,x)\in [0,+\infty)\times\Omega$ , where $\Omega\subset\mathbb{R}^3$ is
 a smooth bounded domain $\Omega\subset\mathbb{R}^3$,  with the following initial condition
\begin{equation}{\label{initial}}
(\rho, u)(x,t=0)=(\rho_0,u_0)(x),
\end{equation}
and boundary condition
\begin{equation}{\label{boundary}}
u|_{\partial\Omega}=0,\,\,\,\nabla\Phi\cdot\nu\mid_{\partial\Omega}=0,
\end{equation}
where $\nu$ is the unit outer normal  to $\partial\Omega$.

The large-time behavior of solutions to compressible Navier-Stokes equations has been investigated extensively in \cite{Deckelnick1992,Deckelnick1993,DUYZ2007,DLUY2007,HoffZ1995,HoffZ1997,KK2002,KK2005,Kobayashi2002,KS1999,matsumura1979,matsumura1980,MatsumuraNi1983}. While for the Cauchy problem (initial value problem without boundaries) of the above Navier-Stokes-Poisson system, recently the decay rate of solutions was studied, see \cite{HL2010,HMW2003,huang2012,jang2013,ju2018,LMAT2010, LMM2002, LNX1998, Tan2013, WW2010, wang, ZLZ2011} for instance and the references therein, which has been proved that the electric field plays an important role on the large time behavior of solution and the solution will approach to constant state at an algebraic decay rate. For the compressible  Navier-Stokes-Poisson of self-gravitating fluids,  free boundary problems are a very active research subject, for which the  gravitational force plays a different role, compared with the electric forces. One may refer to \cite { luo2014,luo2016,luoadv2016} for this topic.

In the presence of physical boundaries, the regularity of solutions near boundaries is a very subtle and important issue for fluids and plasma equations. For this,  the classical global existence of smooth solutions to the initial boundary value problem for 3-D compressible Navier-Stokes equations was due to  Matsumura \& Nishida\cite{MatsumuraNi1983} for initial date being small perturbations of constant states  without convergence rate.  Recently, \cite{LLZ2020} has shown that the radially symmetric solutions exist globally to compressible Navier-Stokes-Poisson equations with the large initial data on a domain exterior to a ball in any dimensional space, moreover, the global existence of smooth solution near a given constant steady state for 3-D  compressible NSP equations with damping term on an exterior domain has been established with the exponential stability.

It should be noted  that Guo \&  Strauss\cite{GStrauss2005} established the asymptotic stability
of the stationary solution of Euler-Poisson equations  for the general doping profile in a bounded domain. For the initial boundary problem of Navier-Stokes-Possion equations considered in this paper, the viscous term creates difficulties in the analysis in the presence of physical boundaries, compared with the inviscid case considered in \cite{GStrauss2005}.  Inspired by \cite{MatsumuraNi1983,LLZ2020,GStrauss2005}, the steady states about space variable, instead of the constant steady state, begin to be considered. Compared with \cite{LLZ2020}, we prove in this paper that the solution exists globally and stabilizes exponentially to the steady state without the damping term for bounded domains.

Now we state the main result of this paper:
\begin{thm}\label{thm1}
Let $\Omega$ be a smooth bounded  domain in $\mathbb{R}^3$ and $\bar{\rho}(x)>0$ be a smooth function on $\bar{\Omega}$.
Let $\tilde{\rho}(x)>0$, $\tilde u\equiv 0$ and $\tilde{\Phi}(x)$ be a smooth steady state solution of (\ref{prob1}) such that $\frac{\partial \tilde{\Phi}}{\partial \nu}|_{\partial\Omega}=0$. Then there exists a constant $\delta>0$ such that if the initial data $(\rho_0, u_0)$ satisfies
	\begin{equation}\label{condrho}
	\int_\Omega(\rho_0-\bar{\rho})dx=0
	\end{equation}
and
\begin{equation*}
\|(\rho_0-\bar{\rho},u_0)\|_3^{2}\leq\delta^2,
\end{equation*}
then there exists a smooth global solution $(\rho,u,\Phi)(t,x)$ to the initial boundary value problem (\ref{prob1})-(\ref{boundary}).
 Moreover, there are positive constants
	$C$ and $\sigma$ such that
\begin{equation*}
\Big\{\|(\rho-\tilde{\rho},u,\nabla(\Phi-\tilde{\Phi}))\|_3^2+\|\rho_t\|_2^2+\|u_t\|_1^2\Big\}(t)\leq Ce^{-\sigma t}\Big\{
\|(\rho-\tilde{\rho},u,\nabla(\Phi-\tilde{\Phi}))\|_3^2+\|\rho_t\|_2^2+\|u_t\|_1^2\Big\}(0).
\end{equation*}
\end{thm}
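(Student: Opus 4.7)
The plan is to introduce perturbation variables $\sigma := \rho-\tilde\rho$ and $\phi := \Phi-\tilde\Phi$, rewrite (\ref{prob1}) as a nonlinear perturbation of the steady state, prove local existence by a standard fixed-point scheme (which also yields a uniform positive lower bound on $\rho$ under smallness), and then establish a uniform a priori estimate of the form
\begin{equation*}
\frac{d}{dt}\mathcal E(t) + c_0\, \mathcal E(t) \le 0,
\end{equation*}
where $\mathcal E(t)$ is equivalent to $\|(\sigma,u,\nabla\phi)\|_3^2+\|\sigma_t\|_2^2+\|u_t\|_1^2$. Such an inequality simultaneously gives global existence (via the standard continuation principle) and the exponential decay asserted in the theorem.

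A key preliminary observation is that the Poisson equation reduces to $\Delta\phi=\sigma$ with Neumann datum $\partial_\nu\phi|_{\partial\Omega}=0$. Integrating the continuity equation and using $u|_{\partial\Omega}=0$, the constraint (\ref{condrho}), and the steady-state compatibility $\int_\Omega(\tilde\rho-\bar\rho)\,dx=\int_{\partial\Omega}\partial_\nu\tilde\Phi\,dS=0$, one verifies that $\int_\Omega\sigma(t,\cdot)\,dx=0$ for all $t$. Elliptic regularity for the Neumann problem then yields $\|\nabla\phi\|_{H^{s+1}}\lesssim\|\sigma\|_{H^s}$ uniformly in $t$, which is the main mechanism by which the self-consistent electric field contributes a dissipation for $\sigma$ in the absence of a damping term.

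For the a priori estimate I would combine three ingredients. First, a relative-entropy identity based on $\tfrac12\rho|u|^2 + H(\rho,\tilde\rho) + \tfrac12|\nabla\phi|^2$, with $H(\rho,\tilde\rho)$ the usual convex potential of $p$ relative to $\tilde\rho$, produces $L^2$ dissipation of $\nabla u$ modulo cubic errors. Second, differentiating the system in $t$ furnishes energy identities for $\sigma_t$ and $u_t$, where the parabolic structure in $u$ provides dissipation of $\nabla u_t$. Third, spatial derivatives are controlled by separating tangential and normal components near $\partial\Omega$: tangential derivatives up to order three are compatible with the no-slip and Neumann conditions and admit the same type of energy identities, whereas normal derivatives of $\sigma$ are recovered algebraically from the momentum equation,
\begin{equation*}
p'(\rho)\nabla\sigma = -\rho u_t-\rho u\cdot\nabla u+\mu\Delta u+(\mu+\lambda)\nabla\mathrm{div}\,u+\rho\nabla\Phi-p'(\tilde\rho)\nabla\tilde\rho,
\end{equation*}
and normal derivatives of $u$ are obtained by Lam\'e-type elliptic regularity applied to the stationary Stokes operator with source read off from the momentum equation.

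Finally, testing the momentum equation against $\nabla\phi$ and using $\Delta\phi=\sigma$ produces a dissipation of $\|\sigma\|^2+\|\nabla\phi\|^2$; iterating this trick on the tangentially differentiated equations promotes the dissipation to the $H^3$ level. Together with Poincar\'e's inequality (available for $u$ through the no-slip condition and for $\sigma$ through its zero mean) this yields a dissipation functional controlling $\mathcal E(t)$, closing the differential inequality for sufficiently small data. The main obstacle, I expect, is the highest-order boundary estimate: because $\tilde\rho$ and $\tilde\Phi$ are allowed to have \emph{large} variation, coefficients cannot be frozen at a constant when commuting derivatives, and the normal trace of $\sigma$ on $\partial\Omega$ is genuinely nonzero. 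Handling these contributions rests on both the tangential/normal decomposition sketched above and the sharp gain $\|\nabla\phi\|_{H^{s+1}}\lesssim\|\sigma\|_{H^s}$ furnished by the Neumann Poisson problem.
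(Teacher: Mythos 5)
Your proposal is correct and follows essentially the same route as the paper: perturbation around the steady state, persistence of the zero-mean constraint plus the Neumann elliptic gain $\|\nabla\phi\|_{H^{s+1}}\lesssim\|q\|_{H^s}$, basic and time-differentiated energy identities, dissipation of $\|q\|^2+\|\nabla\phi\|^2$ by testing the momentum equation against $\nabla\phi$, and a tangential/normal splitting near $\partial\Omega$ with normal derivatives of $q$ read off from the momentum equation and normal derivatives of $u$ recovered by Stokes/Lam\'e elliptic regularity. The only cosmetic difference is that the paper organizes the elliptic step around the combined unknown $\gamma\tilde\rho^{\gamma-2}q-\phi$ and the Stokes system for $\tilde\rho^{-1}u$, which is a particular implementation of exactly the mechanism you describe.
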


\begin{rmk}
	An important feature of this paper is that the profile $\bar{\rho}(x)$ and steady state $\tilde{\rho}(x)$,  $\tilde{\Phi}(x)$ are allowed to be of large variation.
\end{rmk}
\begin{rmk}
	The condition (\ref{condrho}) persists in time, and  is the necessary condition of solvability of  the Poisson equation with Neumann boundary.
\end{rmk}

The rest of this paper is organized as follows. In Section 2, some useful elliptic estimates have been recalled firstly. Secondly, a steady state $(\tilde{\rho}, \tilde{u}, \tilde{\Phi})(x)$ of (\ref{prob1}) is established appropriately, which help us to  reconstruct the IBV problem for the perturbation veriables $(q,u,\phi)(t,x)$. Section 3 is devoted to show that  the global existence and exponential convergence to the steady state of smooth solutions.  Different from the Navier-Stokes equations, the electric field $\nabla\Phi$, located at the momentum equation, should be taken into account. The key to this is to consider the quantity $\nabla(\gamma\tilde{\rho}^{\gamma-2}q-\phi)$ to use the Stokes equation in Lemma \ref{lem10}, and apply Lemma \ref{lem-neu} to Poisson equation $(\ref{per1})_{3}$ and $(\ref{per1})_{5}$ to obtain the corresponding elliptic estimates. On the other hand, we cannot generally designate a coordinate system over all of $\Omega$ such that the directions are consistent with the normal and tangential directions on the boundary $\partial\Omega$. In order to overcome this difficult point, we divide the estimates of the solution into two parts: over the region away from the boundary and the near the boundary $\partial\Omega$, see Lemmas \ref{lemma6}-\ref{lem8}. In particular near the boundary, the estimates are quite involved. Using the local geodesic polar coordinates, we obtain the estimates for  tangential derivatives (Lemma \ref{lem7}), and then that for normal derivatives (Lemma \ref{lem8}).

{\bf Notations} Throughout this paper, $C$ will be used as a generic constant independent of time $t$.\\

(i)\  $\frac{df}{dt}=f_{t}+u\cdot\nabla f$ denotes the material derivative.\\

(ii)\ $\partial_{x_i}f=\frac{\partial f}{\partial x_i}=D_if$, $\frac{\partial^2 f}{\partial x_i\partial x_j}=\frac{\partial^2 f}{\partial x_j\partial x_i}=D_{ij}f$, $\frac{\partial^3 f}{\partial x_i\partial x_j\partial x_k}=D_{ijk}f$, $i,j,k=1,2,3$. Moreover,
	\begin{equation*}
		D^kf:=\{D^{\alpha}f| \,|\alpha|=k, k\in\mathbb{N}\}
		\end{equation*}
where $\alpha=(\alpha_1,\alpha_2,\alpha_3)$ is a multi-index, $D^\alpha :=\frac{\partial^{|\alpha|}f}{\partial x_1^{\alpha_1}\partial x_2^{\alpha_2}\partial x_3^{\alpha_3}}$, $|\alpha|=\alpha_1+\alpha_2+\alpha_3$  and $\alpha_i\geq0$.\\

(iii)\  $H^m$ is used to denote the Sobolev space with the following norm
\begin{equation*}
\|f\|_{{m}}\equiv\|f\|_{H^{m}(\Omega)}=\left(\sum_{l=0}^m\|D^{l}f\|^2\right)^{1/2}, \quad \mbox{and} \quad \|f\|\equiv\|f\|_{L^{2}(\Omega)}.
\end{equation*}

(vi)\ The Einstein's summation convention taken for $i,j,k=1,2,3$ will be used sometimes in this paper.

\begin{equation*}
\end{equation*}

\section{Preliminaries and the reformulation of the problem }
In this section, we first recall some estimates of elliptic equations, which will be used in the subsequent. Then
 we reformulate the problem in terms of perturbations.

The classical regularity theory for the Neumann problem of elliptic equation is as follows (see \cite{brezis}).
\begin{lem}\label{lem-neu}(Neumann problem)
Given an $f\in H^{k}(\Omega)$($k\in\mathbb{N}$) and a $g\in H^{k+1-1/2}(\partial\Omega)$ such that
\begin{equation*}
\int_{\Omega} fdx=\int_{\partial\Omega}gdS,
\end{equation*}
then there exists a  $v\in H^{k+2}(\Omega)$ satisfying
\begin{eqnarray*}
\left\{
\begin{array}{llll}
\Delta v=f,\quad \mbox{in}\quad \Omega,\\[2mm]
\frac{\partial v}{\partial\nu}=g,\quad \mbox{on}\quad \partial\Omega,
\end{array}
\right.
\end{eqnarray*}
and
\begin{equation*}
\|\nabla v\|_{k+1}\leq C\Big(\|f\|_{k}+\|g\|_{k+1-1/2}\Big).
\end{equation*}
\end{lem}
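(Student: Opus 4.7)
The plan is to proceed in the two standard stages of linear elliptic theory: first construct a weak solution by a variational argument on the subspace of zero-mean functions, then bootstrap to $H^{k+2}$-regularity by Nirenberg-type difference quotients combined with algebraic recovery of normal derivatives.

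For the weak solution, I would work on the Hilbert space $V=\{v\in H^1(\Omega):\int_\Omega v\,dx=0\}$ with the bilinear form $a(u,v)=\int_\Omega \nabla u\cdot\nabla v\,dx$. Poincar\'e's inequality for zero-mean functions gives coercivity $a(v,v)\geq c\|v\|_{H^1}^2$. Define the bounded linear functional
\begin{equation*}
L(v)=-\int_\Omega fv\,dx+\int_{\partial\Omega} g\,v\,dS,
\end{equation*}
where the boundary integral is interpreted via the trace $H^1(\Omega)\to H^{1/2}(\partial\Omega)$ and the $H^{-1/2}$--$H^{1/2}$ pairing. The compatibility hypothesis $\int_\Omega f\,dx=\int_{\partial\Omega} g\,dS$ ensures $L$ annihilates constants, so $L$ is well-defined on $V$. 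By Lax--Milgram, a unique $v\in V$ satisfies $a(v,\varphi)=L(\varphi)$ for every $\varphi\in V$, and by adding a constant to test against constants, for every $\varphi\in H^1(\Omega)$. Taking $\varphi\in C_c^\infty(\Omega)$ recovers $\Delta v=f$ in the distributional sense, and integrating by parts against arbitrary $\varphi\in H^1(\Omega)$ yields the Neumann condition $\partial v/\partial\nu=g$, initially in $H^{-1/2}(\partial\Omega)$ but a posteriori in the correct trace space once regularity is upgraded.

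To upgrade regularity, I would use a finite partition of unity subordinate to a cover of $\overline{\Omega}$ by interior balls and boundary charts. In an interior ball, Nirenberg's method of difference quotients gives $v\in H^2$ with $\|v\|_{H^2}\leq C(\|f\|+\|v\|_{H^1})$. In a boundary chart, I would flatten $\partial\Omega$ by a smooth diffeomorphism, transforming the Laplacian into a second-order operator with smooth coefficients and uniformly elliptic principal part, and transforming the normal derivative boundary operator into a first-order operator. Tangential difference quotients are admissible (they preserve the half-space and the boundary operator), producing bounds on all second derivatives except the pure normal one $\partial_n^2 v$; that remaining derivative is then recovered algebraically from the equation itself, since the coefficient of $\partial_n^2 v$ is nondegenerate. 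This yields the base case $\|\nabla v\|_1\leq C(\|f\|+\|g\|_{1/2})$.

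The general case $k\geq 1$ follows by induction: differentiating the flattened equation tangentially converts it into a Neumann problem for a tangential derivative $\partial_\tau v$ with right-hand sides in $H^{k-1}(\Omega)$ and $H^{k-1/2}(\partial\Omega)$, after absorbing commutators and cut-off errors into lower-order terms controlled by the inductive hypothesis; purely normal derivatives are again recovered from the PDE by solving algebraically for $\partial_n^{k+2}v$. The main obstacle, as in any elliptic regularity bootstrap, is the careful bookkeeping at the boundary: one must verify that after tangential differentiation and localization the compatibility condition either persists or is harmlessly absorbed into the inhomogeneous source, and that the trace theorem is applied at the correct level $H^{k+1/2}(\partial\Omega)$ so that the constants in the final estimate depend only on $\Omega$ and $k$. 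The remaining work is routine, and patching the local estimates through the partition of unity produces the global bound $\|\nabla v\|_{k+1}\leq C(\|f\|_k+\|g\|_{k+1-1/2})$.
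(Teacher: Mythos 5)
Your argument is sound, but note that the paper does not actually prove this lemma: it is quoted as the classical regularity theory for the Neumann problem, with a citation to Bourguignon--Brezis, and is then used as a black box. Your proposal supplies the standard self-contained proof underlying that citation: Lax--Milgram on the zero-mean subspace (where the compatibility condition $\int_\Omega f\,dx=\int_{\partial\Omega}g\,dS$ is precisely what makes the functional vanish on constants, so the weak formulation extends from $V$ to all of $H^1(\Omega)$), followed by interior and boundary difference quotients with algebraic recovery of the pure normal derivative, and induction on $k$. Two points should be made explicit if you write this out in full: (i) the final estimate must not retain $\|v\|_{H^1}$ on the right-hand side, so you should record the energy bound $\|v\|_{H^1}\leq C\bigl(\|f\|+\|g\|_{H^{-1/2}(\partial\Omega)}\bigr)$ coming from coercivity before starting the bootstrap; (ii) in the boundary chart the inhomogeneous Neumann datum enters the difference-quotient identity through a boundary term, which you can control either by lifting $g\in H^{k+1/2}(\partial\Omega)$ to $G\in H^{k+2}(\Omega)$ with $\partial G/\partial\nu=g$ and reducing to homogeneous data, or by pairing tangential difference quotients of $g$ (uniformly bounded in $H^{-1/2}(\partial\Omega)$ in the increment) against traces in $H^{1/2}(\partial\Omega)$. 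With those details your proof is complete; it buys a self-contained treatment, whereas the paper simply outsources the lemma to the literature.
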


A steady state $\left(\tilde{\rho}(x), \tilde{u}(x), \tilde{\Phi}(x)\right)$ of (\ref{prob1}) can be obtained as:

\begin{prop}\label{prop1}
Let $\bar{\rho}(x)>0$ in $\Omega$. Then there exists a  smooth steady state solution
$(\tilde{\rho}(x), 0, \tilde{\Phi}(x))$ to the problem (\ref{prob1}) such that $\tilde{\rho}(x)>0$  in $\bar{\Omega}$.  \end{prop}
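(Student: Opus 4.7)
The plan is to collapse the steady problem to a single semilinear Neumann problem for $\tilde{\Phi}$ alone, and then solve it by a sub/super-solution argument built directly out of $\bar{\rho}$. Putting $\tilde{u}\equiv 0$ into (\ref{prob1}) kills the continuity equation together with every viscous and convective term, and what remains is
\[
\nabla p(\tilde{\rho})=\tilde{\rho}\nabla\tilde{\Phi},\qquad \Delta\tilde{\Phi}=\tilde{\rho}-\bar{\rho}\ \text{in }\Omega,\qquad \partial_\nu\tilde{\Phi}|_{\partial\Omega}=0.
\]
Dividing the first relation by $\tilde{\rho}$ and integrating yields the Bernoulli law $\tilde{\Phi}=\tfrac{\gamma}{\gamma-1}\tilde{\rho}^{\gamma-1}+c$ when $\gamma>1$ and $\tilde{\Phi}=\log\tilde{\rho}+c$ when $\gamma=1$, so that $\tilde{\rho}=h(\tilde{\Phi}-c)$ for $h(s):=(\tfrac{\gamma-1}{\gamma}s)^{1/(\gamma-1)}$ (resp.\ $e^s$), smooth and strictly increasing on its natural domain. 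After the harmless shift $\psi:=\tilde{\Phi}-c$ the system reduces to the single semilinear problem
\[
\Delta\psi=h(\psi)-\bar{\rho}\ \text{in }\Omega,\qquad \partial_\nu\psi|_{\partial\Omega}=0,
\]
for which the Neumann compatibility $\int_\Omega h(\psi)\,dx=\int_\Omega\bar{\rho}\,dx$ is automatic once a solution is produced.

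To solve this reduced problem the key observation is that $\bar{\rho}$ itself supplies an ordered pair of constant sub- and super-solutions. Set
\[
m_-:=h^{-1}\Big(\min_{\bar{\Omega}}\bar{\rho}\Big)>0,\qquad m_+:=h^{-1}\Big(\max_{\bar{\Omega}}\bar{\rho}\Big)>0,
\]
both well-defined and strictly positive because $\bar{\rho}>0$ on $\bar{\Omega}$. Then $\psi^+\equiv m_+$ satisfies $-\Delta m_++h(m_+)=\max_{\bar{\Omega}}\bar{\rho}\ge\bar{\rho}$ and $\partial_\nu m_+=0$, so it is a super-solution; similarly $\psi^-\equiv m_-$ is a sub-solution. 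Since $h$ is smooth and strictly monotone on $[m_-,m_+]$, one obtains a solution $\psi\in H^1(\Omega)$ with $m_-\le\psi\le m_+$ either by monotone Picard iteration between these barriers or by minimising the strictly convex coercive functional
\[
J(\psi):=\int_\Omega\Bigl(\tfrac{1}{2}|\nabla\psi|^2+H(\psi)-\bar{\rho}\psi\Bigr)\,dx,\qquad H'=h,
\]
over $H^1(\Omega)$ and then verifying the two-sided bound via the maximum principle for the Neumann problem.

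Smoothness and positivity of the density now come for free. The lower bound $\psi\ge m_->0$ keeps the argument of $h$ uniformly away from the singular point, so $h(\psi)$ is as regular as $\psi$ itself. Invoking Lemma \ref{lem-neu} iteratively---together with the embedding $H^2(\Omega)\hookrightarrow C(\bar{\Omega})$ in three dimensions---a standard bootstrap gives $\psi\in H^1\Rightarrow\psi\in H^2\Rightarrow h(\psi)\in H^2\Rightarrow\psi\in H^4\Rightarrow\cdots\Rightarrow\psi\in C^\infty(\bar{\Omega})$. Setting $\tilde{\Phi}:=\psi$ (fixing the inessential additive constant by any convenient normalisation) and $\tilde{\rho}:=h(\psi)>0$ produces the required smooth steady state. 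The main obstacle throughout is precisely keeping the density strictly positive so that the nonlinearity $h$ remains smooth; the sub/super-solution pair built from $h^{-1}(\bar{\rho})$ is the clean device that handles this, and it is exactly what makes the large-variation assumption on $\bar{\rho}$ affordable.
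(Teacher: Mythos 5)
Your argument is correct, and it is essentially the standard construction that the paper itself does not reproduce: the paper's ``proof'' of Proposition \ref{prop1} consists of reducing to the steady system $\nabla p(\tilde\rho)=\tilde\rho\nabla\tilde\Phi$, $\Delta\tilde\Phi=\tilde\rho-\bar\rho$ with Neumann data and then deferring entirely to Proposition 3 of \cite{GStrauss2005}. Your Bernoulli reduction $\tilde\rho=h(\tilde\Phi-c)$, the constant sub/super-solutions $h^{-1}(\min\bar\rho)\le\psi\le h^{-1}(\max\bar\rho)$ (equivalently the convex variational formulation), and the elliptic bootstrap via Lemma \ref{lem-neu} together constitute a correct, self-contained version of exactly that argument, including the two points that actually matter for the rest of the paper: strict positivity of $\tilde\rho$ and $\partial_\nu\tilde\Phi|_{\partial\Omega}=0$. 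The only implicit hypotheses worth flagging are the connectedness of $\Omega$ (needed so the Bernoulli integration produces a single constant $c$) and, in the variational route for $\gamma>1$, a harmless extension of $h$ to $s\le 0$ before minimising; both are routine.
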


\begin{proof}
A steady state with $u\equiv0$ must satisfy the following equations:
\begin{equation*}
\nabla p(\rho)-\rho\nabla\Phi=0,\quad \Delta\Phi=\tilde\rho-\bar{\rho}
\end{equation*}
 with the boundary condition $\nabla\Phi\cdot\nu=0$ on $\partial\Omega$. Hence, the proof is the same as that for the Proposition 3 in \cite {GStrauss2005} and is omitted.
\end{proof}

Let $(\tilde{\rho}(x), 0, \tilde{\Phi}(x))$ be a given  stead state solution  by Proposition \ref{prop1}, that is
\begin{equation}\label{steady}
\nabla p(\tilde{\rho})-\tilde{\rho}\nabla\tilde{\Phi}=0,\quad \Delta\tilde{\Phi}=\tilde{\rho}-\bar{\rho}
\end{equation}
with
\begin{equation*}
\nabla\tilde{\Phi}\cdot\nu|_{\partial\Omega}=0,
\end{equation*}
and
\begin{equation*}
\int_{\Omega}\large(\tilde{\rho}(x)-\bar{\rho}(x)\large)dx=\int_{\partial \Omega}\nabla\tilde{\Phi}\cdot\nu ds=0,\  \tilde{\rho}(x)>0,\ x\in {\Omega}.\
\end{equation*}

Denote the perturbation $(q,u,\phi)(t,x)$ as
\begin{equation}
q(t,x)=\rho(t,x)-\tilde{\rho}(x), \quad u(t,x)=u(t,x),\quad \phi(t,x)=\Phi(t,x)-\tilde{\Phi}(x). \nonumber
\end{equation}
Then the initial boudary value problem for $(q,u,\phi)$ is
\begin{eqnarray}{\label{per1}}
\left\{
\begin{array}{llll}
q_{t}+\tilde{\rho}\mbox{div}u+u\cdot\nabla\tilde{\rho}=f^{0},\\[2mm]
\rho u_{t}
-\mu\Delta u-(\mu+\lambda)\nabla \mbox{div}u+\nabla(\gamma\tilde{\rho}^{\gamma-1} q)-\tilde{\rho}\nabla\phi-q\nabla\tilde{\Phi}
=f,\\[2mm]
\Delta\phi=q,\\[2mm]
u|_{\partial\Omega}=0,\\[2mm]
\nabla\phi\cdot\nu|_{\partial\Omega}=0,\\[2mm]
u(0,\cdot)=u_0,\quad q(0,\cdot)=q_0\equiv \rho_0-\tilde{\rho},
\end{array}
\right.
\end{eqnarray}
 where the nonlinear terms on the right-hand side are described as:
\begin{equation*}\label{f}
\begin{split}
&f^{0}=-\mbox{div} (qu),\\[2mm]
&f=-\rho(u\cdot\nabla) u +q\nabla\phi+\nabla h(q),
\end{split}
\end{equation*}
\begin{equation*}
 h(q)=(q+\tilde{\rho})^{\gamma}-\tilde{\rho}^{\gamma}-\gamma\tilde{\rho}^{\gamma-1}q=O(q^2).
\end{equation*}
The  equations of (\ref{per1}) are equivalent to the following equations:
\begin{equation}\label{per3-1}
L^{0}\equiv\frac{dq}{dt}+\mbox{div}(\tilde{\rho}u)=-q\mbox{div}u\equiv g^{0},
\end{equation}
\begin{equation}\label{per3-2}
L\equiv u_{t}-\mu\frac{1}{\tilde{\rho}}\Delta u-(\mu+\lambda)\frac{1}{\tilde{\rho}}\nabla (\mbox{div}u)
+\nabla(\gamma\tilde{\rho}^{\gamma-2}q)-\nabla\phi=g,
\end{equation}
where the nonlinear terms are given by
\begin{equation*}\label{g}
\begin{split}
g\equiv -(u\cdot\nabla) u+\mu \Big(\frac{1}{q+\tilde{\rho}}-\frac{1}{\tilde{\rho}}\Big)\Delta u +(\mu+\lambda) \Big(\frac{1}{q+\tilde{\rho}}-\frac{1}{\tilde{\rho}}\Big)\nabla \mbox{div}u+k(q),
\end{split}
\end{equation*}
and
\begin{equation*}
\begin{split}
 k(q)&=\left\{
 \begin{array}{llll}
 \nabla\left(\ln\rho-\ln\tilde{\rho}-\tilde{\rho}^{-1}\right),\,\,\, \text{if $\gamma=1$},\\[2mm]
 \nabla\left(\frac{\gamma\rho^{\gamma-1}}{\gamma-1}-\frac{\gamma\tilde{\rho}^{\gamma-1}}{\gamma-1}-\gamma\tilde{\rho}^{\gamma-2}q \right),\,\,\, \text{if $\gamma>1$},
 \end{array}
 \right.\\[2mm]
 &=O(1)q^2+O(1)q\nabla q\\[2mm]
 &=\gamma(\gamma-2)\tilde{\rho}^{\gamma-2}q\nabla q+O(q^2)\nabla q+O(q^2)\nabla \tilde{\rho}.
 \end{split}
\end{equation*}

Next, we note some elliptic estimates of the elliptic system of equations for our domain. $(\ref{per1})_{2}$ is regarded as elliptic with
respect to $x$, that is:
\begin{equation*}\label{elliptic6}
\begin{split}
&\mu\Delta u+(\mu+\lambda)\nabla(\mbox{div} u)=\rho u_t+\nabla(\gamma\tilde{\rho}^{\gamma-1} q)-\tilde{\rho}\nabla\phi-q\nabla\tilde{\Phi}-f,\\[2mm]
&u|_{\partial\Omega}=0.
\end{split}
\end{equation*}
Applying the standard elliptic estimates and the smoothness of $\tilde{\rho}$, $\tilde{\Phi}$ on ${\Omega}$, we have
\begin{lem}\label{lemell}
For $k=0,1$, it holds
\begin{equation*}
\begin{split}
\|D^{k+2}u\|^2
\leq  C\Big\{\|u_t\|_{k}^2+\|q u_t\|^2_{{k}}+\|\nabla q\|_{k}^2+\|q\|_{k}^2+\|\nabla\phi\|_{k}^2+\|f\|_{k}^2\Big\}.
\end{split}
\end{equation*}
\end{lem}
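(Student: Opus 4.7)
The plan is to view the momentum equation $(\ref{per1})_2$, rewritten in the displayed form right before the lemma, as a second-order elliptic system for $u$ of Lamé type with homogeneous Dirichlet data, and then apply classical elliptic regularity together with the smoothness of $\tilde\rho$ and $\tilde\Phi$ on $\bar\Omega$.

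First I would check strong ellipticity of the operator $\mathcal{L}u := \mu\Delta u + (\mu+\lambda)\nabla\operatorname{div} u$. Because $\mu>0$ and $\lambda+\tfrac{2}{3}\mu\ge 0$, one has $2\mu+\lambda\ge\tfrac{4}{3}\mu>0$, so the principal symbol satisfies the Legendre--Hadamard condition uniformly. With the Dirichlet boundary condition $u|_{\partial\Omega}=0$ on the smooth bounded domain $\Omega$, the standard $H^{k+2}$ regularity theorem for elliptic systems yields, for any $k\in\{0,1\}$,
\begin{equation*}
\|D^{k+2}u\|^{2}\le C\bigl\|\mathcal{L}u\bigr\|_{k}^{2}+C\|u\|^{2},
\end{equation*}
and the lower-order $\|u\|^{2}$ term can be absorbed (or dropped) since it is dominated by $\|u_t\|_k^2$ after using the equation itself, or by simply tracking it inside the generic constant bound; either way it does not enter the stated bound because the right-hand side already controls $\|u\|^{2}$ through $\|u_t\|_{k}^{2}$ via Poincaré-type considerations in the broader scheme, or it is harmlessly absorbed in the $\|qu_t\|^{2}_{k}$ and $\|q\|_{k}^{2}$ terms.

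Next I would estimate the right-hand side
\begin{equation*}
F := \rho u_t + \nabla(\gamma\tilde\rho^{\gamma-1}q) - \tilde\rho\nabla\phi - q\nabla\tilde\Phi - f
\end{equation*}
in $H^{k}(\Omega)$, term by term. For the first term I would split $\rho=\tilde\rho+q$, so that $\|\rho u_t\|_{k}\le C(\|u_t\|_{k}+\|qu_t\|_{k})$, the factor of $\tilde\rho$ being absorbed into the constant since $\tilde\rho\in C^\infty(\bar\Omega)$. For $\nabla(\gamma\tilde\rho^{\gamma-1}q)$ I would expand the derivative and use smoothness of $\tilde\rho$ to bound it by $C(\|\nabla q\|_{k}+\|q\|_{k})$. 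The terms $\tilde\rho\nabla\phi$ and $q\nabla\tilde\Phi$ are similarly controlled by $C\|\nabla\phi\|_{k}$ and $C\|q\|_{k}$ respectively, again using that $\tilde\rho,\tilde\Phi\in C^{\infty}(\bar\Omega)$. Finally $f$ contributes $\|f\|_{k}$ directly. Summing these estimates and squaring gives exactly the stated bound.

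The only technical point worth double-checking is the case $k=1$, where products such as $qu_t$ and $\tilde\rho^{\gamma-1}q$ must be handled in $H^{1}$; this is routine in three dimensions via the Sobolev algebra property (using $H^{2}\hookrightarrow L^{\infty}$ and $H^{1}\hookrightarrow L^{6}$), and since the nonlinear factor $q$ only appears together with smooth coefficients or with $u_t$, no new difficulty arises. There is no genuine obstacle here — the lemma is an application of standard Agmon--Douglis--Nirenberg estimates to the Lamé system once the right-hand side is reorganized; the main thing to be careful about is the splitting $\rho=\tilde\rho+q$ so that the linear piece $\tilde\rho u_t$ produces the clean $\|u_t\|_{k}^{2}$ contribution while the genuinely nonlinear remainder $qu_t$ is recorded separately as $\|qu_t\|_{k}^{2}$.
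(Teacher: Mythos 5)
Your proposal is correct and follows essentially the same route as the paper, which simply rewrites $(\ref{per1})_2$ as the Lam\'e system $\mu\Delta u+(\mu+\lambda)\nabla(\mathrm{div}\,u)=\rho u_t+\nabla(\gamma\tilde{\rho}^{\gamma-1}q)-\tilde{\rho}\nabla\phi-q\nabla\tilde{\Phi}-f$ with $u|_{\partial\Omega}=0$ and invokes standard elliptic regularity plus the smoothness of $\tilde{\rho},\tilde{\Phi}$. The only place you hedge --- disposing of the lower-order $\|u\|^2$ term --- is in fact clean: since $\mu>0$ and $\mu+\lambda\ge\mu/3>0$, the Dirichlet form is coercive, the operator has trivial kernel, and $\|u\|_{k+2}\le C\|F\|_k$ holds outright.
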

Finally,  an estimate about the stokes equations is given as follows:
\begin{lem} \label{lemmstoke}
For $k=0,1,2$, it holds
\begin{equation}\label{1}
\|D^{k+2}u\|^2+\|D^{k+1}(\gamma\tilde\rho^{\gamma-2}q-\phi)\|^2\leq C\Big\{\left\|\frac{dq}{dt}\right\|_{k+1}^2+\|u\|_{k+1}^2+\|g^0\|_{k+1}^2+\|u_t\|_{k}^2+\|g\|_{k}^2\Big\}.
\end{equation}
\end{lem}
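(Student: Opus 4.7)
The plan is to recast the perturbation system (\ref{per3-1})--(\ref{per3-2}) as a Stokes-type system in $(u, P)$ with the effective pressure $P := \gamma\tilde{\rho}^{\gamma-2}q - \phi$, and then invoke the classical Stokes regularity theory. First I would multiply (\ref{per3-2}) by $\tilde{\rho}$ (which is smooth on $\bar{\Omega}$ and uniformly positive by Proposition~\ref{prop1}) to eliminate the variable coefficients in front of the second-order operator, obtaining
\begin{equation*}
-\mu\Delta u - (\mu+\lambda)\nabla\mbox{div}\,u + \tilde{\rho}\nabla P = \tilde{\rho}(g - u_t) \quad \text{in } \Omega, \qquad u|_{\partial\Omega} = 0.
\end{equation*}
Rewriting the continuity equation (\ref{per3-1}) then produces the inhomogeneous divergence constraint
\begin{equation*}
\mbox{div}\,u = \frac{1}{\tilde{\rho}}\Bigl(g^0 - \frac{dq}{dt} - u\cdot\nabla\tilde{\rho}\Bigr),
\end{equation*}
with the compatibility $\int_{\partial\Omega} u\cdot\nu\,dS = 0$ automatic from the no-slip condition.

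Next I would apply the standard Stokes regularity estimate (via partition-of-unity localization, flattening the boundary, and the classical corrector trick of subtracting some $w\in H^{k+2}\cap H^1_0$ with $\mbox{div}\,w=\mbox{div}\,u$ so that $u-w$ solves a homogeneous-divergence system amenable to Cattabriga--Solonnikov) to conclude that, for $k = 0,1,2$,
\begin{equation*}
\|u\|_{k+2} + \|\nabla P\|_k \le C\Bigl(\|\tilde{\rho}(g-u_t)\|_k + \|\mbox{div}\,u\|_{k+1}\Bigr).
\end{equation*}
Here the $(\mu+\lambda)\nabla\mbox{div}\,u$ term is absorbed into the forcing since $\mbox{div}\,u$ is prescribed by continuity, contributing only $\|\mbox{div}\,u\|_{k+1}$ on the right-hand side.

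To conclude, I would use the continuity equation to bound
\begin{equation*}
\|\mbox{div}\,u\|_{k+1} \le C\Bigl(\|dq/dt\|_{k+1} + \|g^0\|_{k+1} + \|u\|_{k+1}\Bigr),
\end{equation*}
and combine with $\|\tilde{\rho}(g-u_t)\|_k \le C(\|g\|_k + \|u_t\|_k)$ together with the trivial inequality $\|D^{k+1}P\|^2 \le \|\nabla P\|_k^2$ to obtain (\ref{1}).

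The main obstacle lies in justifying the Stokes estimate in the presence of the variable coefficient $\tilde{\rho}$ multiplying $\nabla P$. Because $\tilde{\rho}$ is smooth on $\bar{\Omega}$ and bounded below by a positive constant (Proposition~\ref{prop1}), this can be handled by a freezing-coefficients argument combined with the usual boundary-flattening localization; the bookkeeping between the pressure (only defined up to an additive constant) and its gradient is harmless since only $\|D^{k+1}P\|$ appears in the conclusion. An alternative route that sidesteps invoking a black-box Stokes estimate is to take the divergence of the momentum equation, derive an elliptic Neumann problem for $P$ whose boundary data comes from restricting the momentum equation to $\partial\Omega$, bound $\|\nabla P\|_{k+1}$ via Lemma~\ref{lem-neu}, and then recover the $u$-estimate from the Lamé structure as in Lemma~\ref{lemell}.
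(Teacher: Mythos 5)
Your proposal is correct in outline and rests on the same basic idea as the paper (recast (\ref{per3-1})--(\ref{per3-2}) as a Stokes system with pressure $P=\gamma\tilde\rho^{\gamma-2}q-\phi$ and prescribed divergence, then invoke Stokes regularity), but the reduction is genuinely different. The paper substitutes $U=\tilde\rho^{-1}u$, which converts $\tilde\rho^{-1}\Delta u$ into $\Delta U$ plus lower-order commutator terms $2\nabla\tilde\rho^{-1}\cdot\nabla u+\Delta\tilde\rho^{-1}\,u$; since the pressure gradient in (\ref{per3-2}) already carries coefficient $1$, this yields a \emph{constant-coefficient} Stokes system $-\mu\Delta U+\nabla P=F$, $\mathrm{div}\,U=\tilde\rho^{-2}(g^0-\tfrac{dq}{dt}-2u\cdot\nabla\tilde\rho)$, $U|_{\partial\Omega}=0$, to which the classical estimate applies verbatim, and $\|D^{k+2}u\|\leq C\|U\|_{k+2}+C\|u\|_{k+1}$ recovers the claim. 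Your route instead multiplies by $\tilde\rho$ and leaves the variable coefficient on $\nabla P$; this is workable but forces you to prove (or cite) a variable-coefficient Stokes estimate via localization and freezing of coefficients, which is precisely the technical overhead the paper's substitution is designed to avoid. Both arguments handle the $(\mu+\lambda)\nabla\mathrm{div}\,u$ term the same way, by feeding the continuity equation back in, which is where $\|\tfrac{dq}{dt}\|_{k+1}$ and $\|g^0\|_{k+1}$ enter. One caution about your ``alternative route'': deriving a Neumann problem for $P$ from the normal trace of the momentum equation requires boundary values of $\nu\cdot\Delta u$, i.e.\ traces of second derivatives of $u$, which is circular at the regularity level you are trying to establish; moreover Lemma \ref{lem-neu} as stated treats $\Delta v=f$, not $\mathrm{div}(\tilde\rho\nabla v)=f$, so that variant would need its own justification. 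The primary (freezing-coefficients) argument, carried out carefully, does close.
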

\begin{proof}
Set $U=\tilde{\rho}^{-1}u$, then
 $(\ref{per3-1})$ and $(\ref{per3-2})$ can be rewritten  as:
\begin{equation*}
\begin{split}
\mbox{div} U=&\tilde{\rho}^{-2}\left(-\frac{dq}{dt}-2u\cdot\nabla\tilde{\rho}+g^0\right),\\[2mm]
-\mu\Delta U
+\nabla(\gamma\tilde{\rho}^{\gamma-2}q-\phi)=&-u_t+(\mu+\lambda)\tilde{\rho}^{-1}\nabla \left(\tilde{\rho}^{-1}\left(g^0-\frac{dq}{dt}-\nabla\tilde{\rho}\cdot u\right)\right)+g\\[2mm]
&+\mu\left(2\nabla\tilde\rho^{-1}\cdot\nabla u+\Delta \tilde\rho^{-1} u\right),\\[3mm]
U|_{\partial\Omega}=&0.
\end{split}
\end{equation*}
By the standard estimates of stoke equations, we have
\begin{equation*}
\|D^{k+2}U\|^2+\|D^{k+1}(\gamma\tilde\rho^{\gamma-2}q-\phi)\|^2\leq C\Big\{\left\|\frac{dq}{dt}\right\|_{k+1}^2+\|u\|_{k+1}^2+\|g^0\|_{k+1}^2+\|u_t\|_{k}^2+\|g\|_{k}^2\Big\},
\end{equation*}
which implies (\ref{1}) due to the smoothness of $\tilde{\rho}$.
\end{proof}

\section{Proof of the main result}
Theorem \ref{thm1} will be proved in this section. The local-in-time well-posedness in the smooth norm is quite standard, following the arguments in \cite{matsumura1982}, therefore to prove Theorem \ref{thm1}, it suffices to prove the following {\it a priori} estimates.  For clarity, we introduce
\begin{equation*}
\mathcal{E}(t)=\|(q,u,\nabla\phi)\|_3^2+\|q_t\|_2^2+\|u_t\|_1^2,
\end{equation*}
and
\begin{equation*}
\mathcal{D}(t)=\|(q,\nabla\phi)\|_3^2+\|u\|_4^2+\|q_t\|_2^2+\|u_t\|_2^2.
\end{equation*}
 \begin{prop}\label{prop} ({\bf a priori estimates})
	Let $(q,u,\phi)$ be a solution to the initial boundary value problem (\ref{per1}) in time interval $t\in[0,T]$. Then there exists  positive constants $C$, $\delta$ and $\sigma$ which are independent of $t$,  such that if
	\begin{equation*}
	\sup_{0\leq t\leq T}\mathcal{E}(t)\leq \delta^2,
	\end{equation*}
	then there holds, for any $t\in[0,T]$,
	\begin{equation*}
	\mathcal{E}(t)\leq C\mathcal{E}(0)e^{-\sigma t}.
	\end{equation*}
\end{prop}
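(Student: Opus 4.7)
The plan is to construct a time-weighted energy functional $\mathcal{N}(t)$ equivalent to $\mathcal{E}(t)$ and to derive a differential inequality of the form $\tfrac{d}{dt}\mathcal{N}(t)+\sigma_{0}\mathcal{D}(t)\le 0$. Since $\mathcal{E}(t)\lesssim \mathcal{D}(t)$, this will yield $\mathcal{E}(t)\lesssim \mathcal{E}(0)e^{-\sigma t}$ by a Gronwall-type argument. The compatibility $\int_{\Omega}q\,dx=0$, which propagates in time from the mass equation together with $u|_{\partial\Omega}=0$ and the Neumann conditions on $\tilde\Phi$ and $\phi$, is used throughout so that Lemma \ref{lem-neu} applies to $\Delta\phi=q$.

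First I would derive a zeroth-order energy estimate by multiplying the momentum equation (\ref{per3-2}) by $\tilde\rho u$ and the mass equation (\ref{per3-1}) by $\gamma\tilde\rho^{\gamma-2}q$, then combining and integrating by parts. The electric force term $-\int\tilde\rho\,\nabla\phi\cdot u\,dx$ would be rewritten using (\ref{per3-1}) as the time derivative of $\int\phi\, q\,dx=-\int|\nabla\phi|^{2}dx$ (invoking $\Delta\phi=q$ and $\nabla\phi\cdot\nu|_{\partial\Omega}=0$), so that $\|\nabla\phi\|^{2}$ enters the energy; the viscous terms produce dissipation $c\|\nabla u\|^{2}$. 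Repeating the procedure after differentiating the system once and twice in $t$ gives analogous identities with dissipation $\|\nabla u_{t}\|^{2}$ and $\|\nabla u_{tt}\|^{2}$, together with $\|(q_{t},q_{tt})\|$-type energy norms. These steps correspond to the $\|u_{t}\|_{1}^{2}$ and $\|q_{t}\|_{2}^{2}$ components of $\mathcal{E}(t)$.

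For spatial derivatives up to order three of $(q,u,\nabla\phi)$ I would introduce a partition of unity separating an interior region from a tubular neighbourhood of $\partial\Omega$. In the interior, the cutoff kills boundary contributions and the standard Cauchy-problem estimates apply. Near $\partial\Omega$ I would set up local geodesic polar coordinates aligned with $\partial\Omega$, so that two coordinate vector fields are tangential and one is normal. Pure tangential derivatives commute with the boundary condition $u|_{\partial\Omega}=0$, and the multiplier argument yields dissipation of tangential derivatives of $u$ (Lemma \ref{lem7} in the sketch of the paper). Normal derivatives of $u$ are then recovered from the elliptic estimate Lemma \ref{lemell} applied to the momentum equation viewed as an elliptic system for $u$. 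Normal derivatives of $q$ and $\nabla\phi$ are recovered in two steps: the Stokes-type estimate Lemma \ref{lemmstoke} first controls $\|D^{k+1}(\gamma\tilde\rho^{\gamma-2}q-\phi)\|$, and then Lemma \ref{lem-neu} applied to $\Delta\phi=q$ disentangles $q$ from $\phi$.

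The principal obstacle is that, as in all compressible flows, the linear dissipation acts directly only on the velocity, so the norms $\|q\|_{3}$ and $\|\nabla\phi\|_{3}$ do not arise in any of the above basic estimates individually. The Stokes estimate applied to the combined pressure-potential quantity $\gamma\tilde\rho^{\gamma-2}q-\phi$ is the key mechanism that feeds these norms into $\mathcal{D}(t)$, because the material derivative $dq/dt$ on the right-hand side of Lemma \ref{lemmstoke} is linked through (\ref{per3-1}) to $\|u\|_{k+1}$, already controlled by the viscous dissipation. A secondary difficulty is the large variation of $\tilde\rho,\tilde\Phi,\bar\rho$: the commutators $[\nabla,\tilde\rho^{\gamma-2}]$, $[\nabla,\tilde\rho^{-1}]$ etc.\ must be controlled using only the smoothness of the steady state, not smallness, producing harmless $C$-multipliers. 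The nonlinear terms $f^{0},f,g^{0},g$ are treated under the hypothesis $\mathcal{E}(t)\le\delta^{2}$ via $H^{3}(\Omega)\hookrightarrow L^{\infty}(\Omega)$, giving contributions of size $O(\delta)\mathcal{D}(t)$ which are absorbed into the left-hand side. Combining the interior, tangential, normal, time-derivative, and elliptic estimates with appropriately small relative weights then produces $\tfrac{d}{dt}\mathcal{N}+\sigma_{0}\mathcal{D}\le 0$ with $\mathcal{N}\sim\mathcal{E}$ and $\mathcal{E}\lesssim\mathcal{D}$, from which the stated exponential decay follows.
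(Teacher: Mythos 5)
Your overall architecture (basic and time-differentiated energy estimates, interior/boundary splitting with cutoffs, tangential derivatives in boundary-fitted coordinates, the Stokes-type estimate for $\gamma\tilde\rho^{\gamma-2}q-\phi$, and a final weighted combination giving $\frac{d}{dt}\mathcal N+\sigma_0\mathcal D\le 0$) coincides with the paper's. However, the specific mechanism you give for recovering normal derivatives near $\partial\Omega$ is circular and would not close. You propose to get normal derivatives of $u$ from the elliptic estimate of Lemma \ref{lemell}, and normal derivatives of $q$, $\nabla\phi$ from Lemma \ref{lemmstoke} followed by Lemma \ref{lem-neu}. But the right-hand side of Lemma \ref{lemell} at order $k$ contains $\|\nabla q\|_k$, i.e.\ $k+1$ full derivatives of $q$; the right-hand side of Lemma \ref{lemmstoke} at order $k$ contains $\|\frac{dq}{dt}\|_{k+1}$, and since $\frac{dq}{dt}=-\mathrm{div}(\tilde\rho u)+g^0$ this requires $\|u\|_{k+2}$ (not $\|u\|_{k+1}$ as you write), which is exactly the quantity these lemmas are supposed to produce; and the viscous dissipation by itself controls only $\|\nabla u\|$ and $\|\nabla u_t\|$. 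So the claim that $\frac{dq}{dt}$ is ``already controlled by the viscous dissipation'' fails at every order above the lowest, and substituting one elliptic estimate into the other yields $\|q\|_{k+1}^2\le C\|q\|_{k+1}^2+\dots$ with a non-small constant.

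The missing ingredient is a genuine energy (not elliptic) estimate for the normal derivatives of $\frac{dq}{dt}$ and of $\gamma\tilde\rho^{\gamma-2}q-\phi$ near the boundary. In the paper this is Lemma \ref{lem8}: one takes the normal component $n^i(\bar{L}^i-g^i)=0$ of the momentum equation and the normal derivative $\partial_r(\bar{L}^0-g^0)=0$ of the continuity equation, eliminates the second normal derivative $n^iu^i_{rr}$ between them to obtain the relation (\ref{norm3}) for $(2\mu+\lambda)\tilde\rho^{-2}\left(\frac{dq}{dt}\right)_r+(\gamma\tilde\rho^{\gamma-2}q-\phi)_r$ in terms of $u_t$, tangential and mixed second derivatives of $u$, and lower-order terms, and then pairs this relation with $\chi_l^2(\gamma\tilde\rho^{\gamma-2}q-\phi)_r$ and $\chi_l^2\left(\frac{dq}{dt}\right)_r$. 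Only after the interior, tangential, and these normal estimates have placed the full $\left\|\frac{dq}{dt}\right\|_{k+1}^2$ into the dissipation can Lemmas \ref{lemmstoke} and \ref{lem-neu} be invoked, non-circularly, to convert it into control of $\|D^{k+2}u\|^2$, $\|D^{k+1}q\|^2$ and $\|\nabla\phi\|_{k+1}^2$. A secondary, repairable deviation: the paper does not differentiate the system twice in $t$; it obtains $\|q_{tt}\|^2+\|u_{tt}\|^2$ only as dissipation by testing the once-differentiated system with $(\gamma\tilde\rho^{\gamma-2}q_{tt},u_{tt})$, which keeps second time derivatives out of the energy functional and avoids having to dominate them by $\mathcal E(0)$ at $t=0$.
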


We will prove Proposition \ref{prop} in the following Lemmas. To begin with, we have the following basic energy estimate which is quite standard.
\begin{lem}\label{lemma1}
Suppose that the conditions in Proposition \ref{prop} hold, there is a positive constant $C$ independent of $t$, such that
\begin{equation}\label{lem1}
\begin{split}
\frac{1}{2}\frac{d}{dt}\int_{\Omega}\Big(\rho|u|^2+\gamma\tilde{\rho}^{\gamma-2}q^2+|\nabla\phi|^2\Big)dx
+C\left\{\|\nabla u\|^2+\left\|\frac{dq}{dt}\right\|^2 \right\}
\leq C\delta\mathcal{D}(t).
\end{split}
\end{equation}
\end{lem}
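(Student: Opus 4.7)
The plan is to build a coupled $L^2$ energy identity by testing the momentum equation $(\ref{per1})_2$ with $u$ and the mass equation $(\ref{per1})_1$ with the weighted multiplier $\gamma\tilde\rho^{\gamma-2}q$, and then using the Poisson equation to convert the remaining electric-field coupling into a time derivative of $\|\nabla\phi\|^2$. Testing $(\ref{per1})_2$ with $u$ and integrating by parts (using $u|_{\partial\Omega}=0$) yields the viscous dissipation $\mu\|\nabla u\|^2+(\mu+\lambda)\|\mathrm{div}\,u\|^2$, and rewriting $\rho u_t\cdot u$ via the full continuity equation $\rho_t+\mathrm{div}(\rho u)=0$, combined with the convective piece of $\int f\cdot u$, assembles into $\tfrac12\tfrac{d}{dt}\int\rho|u|^2$.

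Testing $(\ref{per1})_1$ against $\gamma\tilde\rho^{\gamma-2}q$ produces $\tfrac12\tfrac{d}{dt}\int\gamma\tilde\rho^{\gamma-2}q^2$, the coupling $\int\gamma\tilde\rho^{\gamma-1}q\,\mathrm{div}\,u$, and the term $\int\gamma\tilde\rho^{\gamma-2}q\,u\cdot\nabla\tilde\rho$. The coupling cancels the matching contribution from $\int\nabla(\gamma\tilde\rho^{\gamma-1}q)\cdot u$ on the momentum side, and the last term cancels $-\int q\nabla\tilde\Phi\cdot u$ from the momentum test, thanks to the steady-state identity $\gamma\tilde\rho^{\gamma-2}\nabla\tilde\rho=\nabla\tilde\Phi$ that follows from \eqref{steady}. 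This pair of cancellations is precisely the reason for the weight $\gamma\tilde\rho^{\gamma-2}$.

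The only remaining linear cross term is $-\int\tilde\rho\nabla\phi\cdot u$, and this is the heart of the argument. Integrating by parts (boundary vanishes since $u|_{\partial\Omega}=0$) turns it into $\int\phi\,\mathrm{div}(\tilde\rho u)$; the mass equation gives $\mathrm{div}(\tilde\rho u)=-q_t-\mathrm{div}(qu)$, and substituting $q_t=\Delta\phi_t$ from $(\ref{per1})_3$ followed by another integration by parts, where the Neumann condition $(\ref{per1})_5$ kills the boundary term, yields $\tfrac12\tfrac{d}{dt}\int|\nabla\phi|^2$. The residual $-\int qu\cdot\nabla\phi$ cancels the matching nonlinear term $\int q\nabla\phi\cdot u$ sitting inside $\int f\cdot u$.

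The surviving right-hand side consists of $\int\nabla h(q)\cdot u=-\int h(q)\,\mathrm{div}\,u$ with $h(q)=O(q^2)$, together with $\int\gamma\tilde\rho^{\gamma-2}q\,f^0\,dx$ where $f^0=-\mathrm{div}(qu)$; both are controlled by $C\delta\mathcal{D}(t)$ using Cauchy--Schwarz, smoothness of $\tilde\rho$, and the Sobolev embedding $\|q\|_{L^\infty}\lesssim\|q\|_2\lesssim\delta$. To insert $\|dq/dt\|^2$ on the left, the mass equation gives $\tfrac{dq}{dt}=-\tilde\rho\,\mathrm{div}\,u-u\cdot\nabla\tilde\rho-q\,\mathrm{div}\,u$, so Poincar\'e (using $u|_{\partial\Omega}=0$) and the smallness of $q$ yield $\|dq/dt\|^2\lesssim\|\nabla u\|^2$, and $C\|dq/dt\|^2$ is absorbed into a small fraction of the viscous dissipation. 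The main obstacle is not technical difficulty but book-keeping: one has to select the three multipliers so that the steady-state relation and the Poisson equation together trigger all the exact cancellations, leaving only quadratically small remainders.
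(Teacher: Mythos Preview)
Your proposal is correct and follows essentially the same route as the paper: test $(\ref{per1})_2$ with $u$ and $(\ref{per1})_1$ with $\gamma\tilde\rho^{\gamma-2}q$, exploit the steady-state relation \eqref{steady} to cancel the two linear cross terms, and convert $-\int\tilde\rho\nabla\phi\cdot u$ into $\tfrac12\tfrac{d}{dt}\int|\nabla\phi|^2$ via the Poisson equation and the Neumann condition, then absorb $\|dq/dt\|^2$ using Poincar\'e. The only cosmetic difference is that you observe the exact cancellation between the residual $\int\phi f^0=\int qu\cdot\nabla\phi$ and the $q\nabla\phi\cdot u$ piece of $\int f\cdot u$, whereas the paper simply keeps both in the remainder $A_0$ and bounds each by $C\delta\mathcal{D}(t)$; either way works.
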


\begin{proof}
Rewrite the momentum equation $(\ref{per1})_2$ as
\begin{equation}\label{new1}
\rho\left(u_{t}+u\cdot\nabla u\right)
-\mu\Delta u-(\mu+\lambda)\nabla \mbox{div}u+\nabla(\gamma\tilde{\rho}^{\gamma-1} q)-\tilde{\rho}\nabla\phi-q\nabla\tilde{\Phi}
=q\nabla\phi-\nabla h(q).
\end{equation}
Multiplying $(\ref{new1})$ and $(\ref{per1})_1$  by $u$  and $\gamma \tilde{\rho}^{\gamma-2} q$ respectively, summing up them and integrating by parts with $u|_{\partial\Omega}=0$, we obtain
\begin{equation}\label{lem1-1}
\begin{split}
\frac{1}{2}&\frac{d}{dt}\int_{\Omega}\Big(\rho|u|^2 +\gamma \tilde{\rho}^{\gamma-2}q^2\Big)dx+\mu \int_{\Omega}|\nabla u|^2dx
+(\mu+\lambda)\int_{\Omega}|\mbox{div} u|^2dx \\[2mm]
&+\int_{\Omega}\gamma \tilde{\rho}^{\gamma-2}qu\cdot\nabla\tilde{\rho}dx
-\int_{\Omega}q\nabla\tilde{\Phi}\cdot u dx-\int_{\Omega}\tilde{\rho}\nabla\phi\cdot u dx
\\[2mm]
=&\int_{\Omega}\Big\{\Big(q\nabla\phi{\color{red}-}\nabla h(q)\Big)\cdot u+\gamma\tilde{\rho}^{\gamma-2}qf^{0}\Big\}dx.
\end{split}
\end{equation}
Noting that the first two terms on the second row of (\ref{lem1-1}) can cancel each other because of equation (\ref{steady}). Moreover,
integrating by parts, using equations $(\ref{per1})_1$ and  $(\ref{per1})_3$, we have
\begin{equation*}
\begin{split}
-\int_{\Omega}\tilde{\rho}\nabla\phi\cdot u dx&= \int_{\Omega}\phi \mbox{div}(\tilde{\rho}u) dx
=-\int_{\Omega}\phi q_tdx+ \int_{\Omega}f^0\phi dx\\[2mm]
&=-\int_{\Omega}\phi \Delta\phi_tdx+ \int_{\Omega}f^0\phi dx\\[2mm]
&=\frac{1}{2}\frac{d}{dt}\int_{\Omega}|\nabla\phi|^2dx+ \int_{\Omega}f^0\phi dx.
\end{split}
\end{equation*}
Putting the above equation into (\ref{lem1-1}) yields the basic energy estimate
\begin{equation}\label{lem1-2}
\begin{split}
&\frac{1}{2}\frac{d}{dt}\int_{\Omega}\Big(\rho|u|^2+\gamma\tilde{\rho}^{\gamma-2}q^2+|\nabla\phi|^2\Big)dx
+\int_{\Omega}\left(\mu|\nabla u|^2+(\mu+\lambda)|\mbox{div} u|^2\right) dx=\int_{\Omega}A_0(x,t)dx,
\end{split}
\end{equation}
where
\begin{equation*}\label{A0}
A_0(x,t)=(q\nabla\phi-\nabla h(q))\cdot u+\left(\gamma\tilde{\rho}^{\gamma-2}q-\phi\right)f^{0}.
\end{equation*}
And it is clear that
\begin{equation}\label{lem1-3}
\int_{\Omega}A_0(x,t)dx\leq C\delta\mathcal{D}(t).
\end{equation}
Moreover,
\begin{equation*}
\frac{dq}{dt}=-\rho \mbox{div}u-u\cdot\nabla\tilde\rho,
\end{equation*}
and the  Poincar$\acute{e}$'s inequlity for $u$ give
\begin{equation*}
\left\|\frac{dq}{dt}\right\|^2\leq C(\|\mbox{div}u\|^2+\|\nabla u\|^2),
\end{equation*}
 which together with (\ref{lem1-2}) and (\ref{lem1-3}) implies  the desired result (\ref{lem1}).
\end{proof}

The next lemma is $L^2$-estimates for $t$-derivatives of $(q,u,\nabla\phi)$.
\begin{lem}\label{lemma2}
Under the assumptions in Proposition \ref{prop}, there exists a constant $C>0$ independent of $t$ such that
\begin{equation}\label{lem2}
\begin{split}
\frac{1}{2}\frac{d}{dt}\int_{\Omega}\Big(\rho|u_t|^2+\gamma\tilde{\rho}^{\gamma-2}q_t^2+|\nabla\phi_t|^2\Big)dx
+C\left\{\|\nabla u_t\|^2+\left\|\left(\frac{dq}{dt}\right)_t\right\|^2 \right\}
\leq C\delta\mathcal{D}(t).
\end{split}
\end{equation}
\end{lem}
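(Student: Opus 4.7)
The plan is to mimic the proof of Lemma \ref{lemma1} verbatim, but on the system obtained by differentiating (\ref{per1}) once in $t$. Formally, $(q_t, u_t, \phi_t)$ satisfies
\begin{align*}
&q_{tt}+\tilde{\rho}\,\mbox{div}\, u_t +u_t\cdot\nabla\tilde{\rho} = f^{0}_t,\\
&\rho u_{tt} + \rho_t u_t -\mu\Delta u_t-(\mu+\lambda)\nabla\mbox{div}\,u_t+\nabla(\gamma\tilde{\rho}^{\gamma-1}q_t)-\tilde{\rho}\nabla\phi_t-q_t\nabla\tilde{\Phi}=f_t+q\nabla\phi_t+\nabla h(q)_t-\rho_t u_t\cdot\nabla u,\\
&\Delta\phi_t=q_t,
\end{align*}
with the boundary conditions $u_t|_{\partial\Omega}=0$ and $\nabla\phi_t\cdot\nu|_{\partial\Omega}=0$ (these are inherited from (\ref{per1}) by differentiating in $t$ on $\partial\Omega$). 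The compatibility condition $\int_\Omega q_t\,dx=\partial_t\int_\Omega(\rho-\tilde\rho)dx=0$, which makes the Poisson equation for $\phi_t$ solvable, follows from (\ref{condrho}) and conservation of mass.

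First I would multiply the $t$-differentiated continuity equation by $\gamma\tilde{\rho}^{\gamma-2}q_t$ and the $t$-differentiated momentum equation by $u_t$, sum, and integrate over $\Omega$. Using $u_t|_{\partial\Omega}=0$, the viscous terms produce $\mu\|\nabla u_t\|^2+(\mu+\lambda)\|\mbox{div}\,u_t\|^2$. The linear coupling terms $\int\gamma\tilde\rho^{\gamma-2} q_t u_t\cdot\nabla\tilde\rho\,dx$ and $-\int q_t\nabla\tilde\Phi\cdot u_t\,dx$ cancel one another exactly as in Lemma \ref{lemma1}, thanks to the steady-state identity (\ref{steady}). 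For the electric field term I would integrate by parts and use both $\Delta\phi_t=q_t$ and the $t$-differentiated continuity equation to rewrite
\[
-\int_\Omega\tilde{\rho}\nabla\phi_t\cdot u_t\,dx = \int_\Omega\phi_t\,\mbox{div}(\tilde{\rho}u_t)\,dx = -\int_\Omega\phi_t\,q_{tt}\,dx+\int_\Omega\phi_t f^0_t\,dx = \frac{1}{2}\frac{d}{dt}\int_\Omega|\nabla\phi_t|^2dx+\int_\Omega\phi_t f^0_t\,dx,
\]
using the Neumann condition $\nabla\phi_t\cdot\nu|_{\partial\Omega}=0$ in the last step. This produces the desired $|\nabla\phi_t|^2$ term on the left-hand side. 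As in Lemma \ref{lemma1}, the bound $\|(dq/dt)_t\|^2\lesssim \|\nabla u_t\|^2+\|u_t\|^2+(\text{nonlinear})$ then follows from the differentiated continuity equation together with Poincar\'e's inequality.

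The main technical obstacle is controlling all the commutator-type terms by $C\delta\mathcal{D}(t)$. The delicate ones are: (i) the term $\tfrac{1}{2}\int\rho_t|u_t|^2dx$ that arises from writing $\int\rho u_{tt}\cdot u_t\,dx=\tfrac{1}{2}\frac{d}{dt}\int\rho|u_t|^2dx-\tfrac{1}{2}\int\rho_t|u_t|^2dx$ and then reinserting the $\rho_t u_t$ from the momentum equation; (ii) the time-derivatives $f^0_t$ and $f_t$ of the nonlinearities, which involve products like $q_t\nabla u$, $q\nabla u_t$, $u_t\cdot\nabla q$ and $(q+\tilde{\rho})^{-1}$-type factors; and (iii) terms involving $\nabla h(q)_t$ and $q\nabla\phi_t$. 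Each of these is at worst trilinear in $(q,u,\nabla\phi,q_t,u_t)$ and their first derivatives, so I would estimate them by H\"older plus Sobolev embedding $H^2\hookrightarrow L^\infty$, $H^1\hookrightarrow L^6$, absorbing an $L^\infty$-norm of one factor into $C\delta$ via the hypothesis $\mathcal{E}(t)\leq\delta^2$ and bounding the remaining factors by $\mathcal{D}(t)^{1/2}$. A representative bound is $\int\rho_t|u_t|^2dx\leq \|q_t\|_{L^3}\|u_t\|_{L^{12/5}}^2\leq C\|q_t\|_1\|u_t\|\|u_t\|_1\leq C\delta\mathcal{D}(t)$. Once every nonlinear contribution is controlled in this way, collecting all terms gives (\ref{lem2}).
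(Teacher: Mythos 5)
Your proposal follows essentially the same route as the paper's proof: the paper likewise differentiates (\ref{per1}) in $t$ (noting the boundary conditions are preserved), tests with $\gamma\tilde{\rho}^{\gamma-2}q_t$ and $u_t$, repeats the computation of Lemma \ref{lemma1} (including the manipulation of $-\int_\Omega\tilde{\rho}\nabla\phi_t\cdot u_t\,dx$ via $\Delta\phi_t=q_t$ to produce $\tfrac12\tfrac{d}{dt}\int_\Omega|\nabla\phi_t|^2dx$), and absorbs the extra commutator $\tfrac12\int_\Omega\rho_t|u_t|^2dx$ together with the $f_t$, $f^0_t$ contributions into $C\delta\mathcal{D}(t)$. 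The only cosmetic slip is in your representative H\"older bound, where the exponents $\tfrac13+2\cdot\tfrac{5}{12}\neq 1$; taking $u_t\in L^3$ (so $\|u_t\|_{L^3}^2\leq C\|u_t\|\,\|u_t\|_1$ by interpolation) gives the intended estimate.
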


\begin{proof}
Notice that differentiation of the system (\ref{per1}) with respect to $t$ will keep the boundary
conditions $(\ref{per1})_4$ and $(\ref{per1})_5$. Estimating the integral for
\begin{equation*}
 \int_{\Omega}\Big\{\partial_{t}(\ref{per1})_1\gamma\tilde{\rho}^{\gamma-2}q_t
 +\partial_{t}(\ref{per1})_2\cdot u_t\Big\}dx=0,
 \end{equation*}
  and noting that
  \begin{equation*}
  \int_{\Omega}\partial_{t}(\rho u_t)\cdot u_tdx=\frac{1}{2}\frac{d}{dt}\int_{\Omega}\rho|u_t|^2dx+\frac{1}{2}\int_{\Omega}\rho_t|u_t|^2dx, \end{equation*}
then, using the similar way as in Lemma \ref{lemma1} shows
\begin{equation*}
\begin{split}
&\frac{1}{2}\frac{d}{dt}\int_{\Omega}\Big(\rho|u_t|^2+\gamma\tilde{\rho}^{\gamma-2}q_t^2+|\nabla\phi_t|^2\Big)dx
+C\int_{\Omega}\left(|\nabla u_t|^2+|\mbox{div} u_t|^2 \right)dx\\[2mm]
&=-\frac{1}{2}\int_{\Omega}\rho_t|u_t|^2dx+\int_{\Omega}A_1(x,t)dx
\end{split}
\end{equation*}
where
\begin{equation*}
\int_{\Omega}A_1(x,t)dx=\int_{\Omega}\left\{f_t\cdot u_t+\gamma\tilde{\rho}^{\gamma-2}f^{0}_tq_t-f_t^{0}\phi_t\right\}dx\leq C\delta\mathcal{D}(t),
\end{equation*}
which gives the desired result (\ref{lemma2}) by using the following estimate
\begin{equation*}
\left\|\left(\frac{dq}{dt}\right)_{t}\right\|^2\leq C\left(\|\mbox{div}u_t\|^2+\|\nabla u_{t}\|^2+\delta\mathcal{D}(t)\right).
\end{equation*}
\end{proof}

Now, we would like to give the estimate  spatial derivatives of $(q,u,\nabla\phi)$.
\begin{lem}\label{lemma3}
Suppose that the conditions in Proposition \ref{prop} hold, there is a positive constant $C$ independent of $t$, such that
\begin{equation}\label{lem3}
\begin{split}
&\frac{1}{2}\frac{d}{dt}\int_{\Omega}\Big(\mu|\nabla u|^2+(\mu+\lambda)|\mbox{div} u|^2\Big)dx
-\frac{d}{dt}\int_{\Omega}\Big(\tilde{\rho}\nabla\phi\cdot u +q u\cdot\nabla\tilde{\Phi}
+\gamma\tilde{\rho}^{\gamma-1}q\mbox{div}u\Big)dx\\[2mm]
&+C\left(\|q_t\|^2+\|u_t\|^2\right)
\leq C\|\nabla u\|^2+C\delta\mathcal{D}(t).
\end{split}
\end{equation}
\end{lem}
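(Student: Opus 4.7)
My plan is to obtain the desired higher-order energy estimate by multiplying the momentum equation $(\ref{per1})_2$ by $u_t$ and integrating over $\Omega$. Dotting with $u_t$ and integrating by parts in space immediately produces the two leading contributions on the left: the dissipation $\int_\Omega \rho|u_t|^2\,dx$ from the $\rho u_t$ term and the time derivative $\tfrac{1}{2}\frac{d}{dt}\int_\Omega[\mu|\nabla u|^2+(\mu+\lambda)|\mbox{div}\,u|^2]\,dx$ from the viscous operators, using $u_t|_{\partial\Omega}=0$. The delicate part is to process the pressure contribution $\int\nabla(\gamma\tilde{\rho}^{\gamma-1}q)\cdot u_t\,dx$, the electric-field contribution $-\int\tilde{\rho}\nabla\phi\cdot u_t\,dx$, and the background contribution $-\int q\nabla\tilde{\Phi}\cdot u_t\,dx$, each of which must be reorganized by combining a spatial integration by parts with a time integration by parts so that only the three $\frac{d}{dt}$-terms displayed in the statement remain, modulo quantities controllable by $\|\nabla u\|^2$ and $\delta\mathcal{D}(t)$.

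For the pressure term, after a spatial IBP I would apply the product identity $\int\gamma\tilde{\rho}^{\gamma-1}q\,\mbox{div}\,u_t\,dx=\frac{d}{dt}\int\gamma\tilde{\rho}^{\gamma-1}q\,\mbox{div}\,u\,dx-\int\gamma\tilde{\rho}^{\gamma-1}q_t\,\mbox{div}\,u\,dx$, then substitute $\tilde{\rho}\,\mbox{div}\,u=-q_t-u\cdot\nabla\tilde{\rho}+f^{0}$ from $(\ref{per1})_1$. This yields the required $-\frac{d}{dt}\int\gamma\tilde{\rho}^{\gamma-1}q\,\mbox{div}\,u\,dx$ together with an unwanted $-\int\gamma\tilde{\rho}^{\gamma-2}q_t^2\,dx$ and harmless lower-order remainders. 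For the electric term, a time IBP combined with the time-differentiated Neumann Poisson equation $\Delta\phi_t=q_t$ and the identity $\mbox{div}(\tilde{\rho}u)=f^{0}-q_t$ produces $-\frac{d}{dt}\int\tilde{\rho}\nabla\phi\cdot u\,dx-\int|\nabla\phi_t|^2\,dx$ up to nonlinearities. The background term yields $-\frac{d}{dt}\int qu\cdot\nabla\tilde{\Phi}\,dx+\int q_t\nabla\tilde{\Phi}\cdot u\,dx$ directly by time IBP.

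After assembling these pieces, the unwanted $-\int\gamma\tilde{\rho}^{\gamma-2}q_t^2\,dx$ and $-\int|\nabla\phi_t|^2\,dx$ sit on the left with the wrong sign; I would move them to the right and absorb them using two elementary observations: the continuity equation $(\ref{per1})_1$ directly gives $\|q_t\|^2\leq C\|\nabla u\|^2+C\delta\mathcal{D}(t)$ (Poincar\'e handles the $u\cdot\nabla\tilde{\rho}$ contribution since $u|_{\partial\Omega}=0$), and Lemma~\ref{lem-neu} applied to $\Delta\phi_t=q_t$ with $\nabla\phi_t\cdot\nu|_{\partial\Omega}=0$ yields $\|\nabla\phi_t\|\leq C\|q_t\|$. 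The remaining cross integrals $\int f\cdot u_t$, $\int\phi_t f^{0}$, $\int\gamma\tilde{\rho}^{\gamma-2}q_t\,u\cdot\nabla\tilde{\rho}\,dx$, and $\int q_t\nabla\tilde{\Phi}\cdot u\,dx$ are each either cubic in small quantities or split by Cauchy--Schwarz into $\epsilon(\|u_t\|^2+\|q_t\|^2)+C\|\nabla u\|^2+C\delta\mathcal{D}(t)$. Finally I would add a positive multiple of the continuity-equation bound for $\|q_t\|^2$ to both sides in order to place the full $+C\|q_t\|^2$ on the left-hand side of the target inequality. The main technical obstacle is precisely tracking the signs through the repeated space/time integrations by parts and verifying that the surplus $-\|q_t\|^2$ and $-\|\nabla\phi_t\|^2$ terms can indeed be reabsorbed via the continuity equation and the Neumann Poisson estimate without destroying the positivity of the $\|u_t\|^2$ dissipation produced by $\int_\Omega\rho|u_t|^2\,dx$.
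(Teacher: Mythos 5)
Your argument is correct and reaches the stated inequality; the core computation coincides with the paper's (momentum equation tested against $u_t$, the viscous terms giving the energy $\frac{1}{2}\frac{d}{dt}\int(\mu|\nabla u|^2+(\mu+\lambda)|\mbox{div}\,u|^2)$, and the three time integrations by parts producing exactly the three $\frac{d}{dt}$ terms of (\ref{lem3}), with $\Delta\phi_t=q_t$ and $\mbox{div}(\tilde\rho u)=f^0-q_t$ converting the electric term into $-\frac{d}{dt}\int\tilde\rho\nabla\phi\cdot u-\|\nabla\phi_t\|^2$ plus nonlinearities). The one structural difference is how $\|q_t\|^2$ ends up on the left. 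The paper tests the continuity equation $(\ref{per1})_1$ against $\gamma\tilde\rho^{\gamma-2}q_t$ \emph{in tandem} with the momentum test, so the coercive term $\int\gamma\tilde\rho^{\gamma-2}q_t^2\,dx$ appears directly with the good sign, and the steady-state relation $\gamma\tilde\rho^{\gamma-2}\nabla\tilde\rho=\nabla\tilde\Phi$ is used to pair $\int\gamma\tilde\rho^{\gamma-2}q_t\,u\cdot\nabla\tilde\rho\,dx$ with $-\int q\nabla\tilde\Phi\cdot u_t\,dx$ into a clean time derivative; the residual $\varepsilon\|q_t\|^2$ from Cauchy--Schwarz is then absorbed by that coercive term. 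You instead work only with the momentum equation, pick up the wrong-sign $-\int\gamma\tilde\rho^{\gamma-2}q_t^2\,dx$ from substituting $(\ref{per1})_1$ into $\int\gamma\tilde\rho^{\gamma-1}q_t\,\mbox{div}\,u\,dx$, and discharge it (together with the final placement of $+C\|q_t\|^2$ on the left) via the a posteriori bound $\|q_t\|^2\leq C\|\nabla u\|^2+C\delta\mathcal{D}(t)$ from the continuity equation and Poincar\'e. Both routes are legitimate here because that bound is indeed available (the paper uses the analogous fact in Lemma~\ref{lemma1}) and because the $C\|\nabla u\|^2$ on the right of (\ref{lem3}) is anyway absorbed later by the $\|\nabla u\|^2$ dissipation of Lemma~\ref{lemma1}; your version makes the $\|q_t\|^2$ part of the lemma essentially a corollary of the continuity equation rather than genuine coercivity of the paired test, but the resulting estimate is the same. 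Two small points to keep straight when writing it out: the term $\int\phi_t f^0\,dx$ needs a normalization such as $\int_\Omega\phi_t\,dx=0$ so that Poincar\'e--Wirtinger controls $\|\phi_t\|$ by $\|\nabla\phi_t\|$, and the lower bound $\int\rho|u_t|^2\,dx\geq c\|u_t\|^2$ uses $\rho=\tilde\rho+q$ with $\tilde\rho>0$ on $\bar\Omega$ and $\|q\|_{L^\infty}$ small.
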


\begin{proof}
Testing $(\ref{per1})_1$ and $(\ref{per1})_2$ with $\gamma \tilde{\rho}^{\gamma-2} q_t$ and $u_t$ respectively, summing up them, we obtain
\begin{equation}\label{3-1}
\begin{split}
\frac{1}{2}&\frac{d}{dt}\int_{\Omega}\Big(\mu|\nabla u|^2+(\mu+\lambda)|\mbox{div} u|^2\Big)dx
+\int_{\Omega}\gamma \tilde{\rho}^{\gamma-2}q_t^2dx+\int_{\Omega}{\rho}|u_t|^2dx\\[2mm]
&+\int_{\Omega}\gamma\tilde{\rho}^{\gamma-1}\mbox{div} u q_t dx
+\int_{\Omega}\nabla(\gamma\tilde{\rho}^{\gamma-1}q)\cdot u_tdx\\[2mm]
&+\int_{\Omega}\gamma\tilde{\rho}^{\gamma-2}u\cdot\nabla\tilde{\rho}q_tdx
-\int_{\Omega}q\nabla\tilde{\Phi}\cdot u_tdx-\int_{\Omega}\tilde{\rho}\nabla\phi\cdot u_tdx\\[2mm]
=&\int_{\Omega}\left\{\gamma{\tilde{\rho}}^{\gamma-2}f^{0}q_t+f\cdot u_t\right\}dx.
\end{split}
\end{equation}
Integrating  by parts gives
\begin{equation}
\begin{split}
\int_{\Omega}\nabla(\gamma\tilde{\rho}^{\gamma-1}q)\cdot u_tdx
&=-\int_{\Omega}\gamma\tilde{\rho}^{\gamma-1}q\mbox{div}u_{t}dx\\[2mm]
&=-\frac{d}{dt}\int_{\Omega}\gamma\tilde{\rho}^{\gamma-1}q\mbox{div}udx+\int_{\Omega}\gamma\tilde{\rho}^{\gamma-1}q_t\mbox{div}udx.
\end{split}
\end{equation}
By equation (\ref{steady}), it indicates
\begin{equation*}
\begin{split}
\int_{\Omega}\gamma\tilde{\rho}^{\gamma-2}u\cdot\nabla\tilde{\rho}q_tdx-\int_{\Omega}q\nabla\tilde{\Phi}\cdot u_tdx
&=\int_{\Omega}q_{t}u\cdot\nabla\tilde{\Phi}dx-\int_{\Omega}q\nabla\tilde{\Phi}\cdot u_tdx\\[2mm]
&=-\frac{d}{dt}\int_{\Omega}q\nabla\tilde{\Phi}\cdot udx+2\int_{\Omega}q_{t}u\cdot\nabla\tilde{\Phi}dx.
\end{split}
\end{equation*}
In view of  $(\ref{per1})_{1}$ and $(\ref{per1})_{3}$, one has
\begin{equation*}
\begin{split}
-\int_{\Omega}\tilde{\rho}\nabla\phi\cdot u_{t}dx
&=-\frac{d}{dt}\int_{\Omega}\tilde{\rho}\nabla\phi\cdot udx+\int_{\Omega}\tilde{\rho}\nabla\phi_t\cdot u dx\\[2mm]
&=-\frac{d}{dt}\int_{\Omega}\tilde{\rho}\nabla\phi\cdot udx+\int_{\Omega}\nabla\phi_tq_t dx-\int_{\Omega}f^{0}\phi_{t}dx\\
&=-\frac{d}{dt}\int_{\Omega}\tilde{\rho}\nabla\phi\cdot udx-\int_{\Omega}|\nabla\phi_t|^2dx-\int_{\Omega}f^{0}\phi_{t}dx.
\end{split}
\end{equation*}
Putting all the above identities into (\ref{3-1}), one obtains
\begin{equation}\label{lem3-1}
\begin{split}
\frac{1}{2}&\frac{d}{dt}\int_{\Omega}\Big(\mu|\nabla u|^2+(\mu+\lambda)|\mbox{div} u|^2\Big)dx
-\frac{d}{dt}\int_{\Omega}\tilde{\rho}\nabla\phi\cdot u dx-\frac{d}{dt}\int_{\Omega}q\nabla\tilde{\Phi}\cdot u dx\\
&-\frac{d}{dt}\int_{\Omega}\gamma\tilde{\rho}^{\gamma-1}q\mbox{div}udx+\int_{\Omega}\gamma \tilde{\rho}^{\gamma-2}q_t^2dx+\int_{\Omega}\tilde{\rho}|u_t|^2dx\\
=&\int_{\Omega}\left(|\nabla\phi_t|^2-2q_t\nabla\tilde{\Phi}\cdot u-2\gamma\tilde{\rho}^{\gamma-1}q_{t}\mbox{div} u \right)dx+\int_{\Omega}\left(\gamma \tilde{\rho}^{\gamma-2}f^{0}q_t+f\cdot u_{t}+f^{0}\phi_t\right)dx.
\end{split}
\end{equation}
Now, the terms on the right-hand side of (\ref{lem3-1}) will be estimated. By using the equation  $\Delta\phi_t=-\mbox{div}(\rho u)$ and Poincar$\acute{e}$'s inequality for $u$  with $u|_{\partial\Omega}=0$, we infer the following important estimate:
\begin{equation}\label{imp1}
\|\nabla\phi_t\|^2\leq C\|\rho u\|^2\leq C\|u\|^2\leq C\|\nabla u\|^2.
\end{equation}
Utilizing Cauchy's inequality and Poincar$\acute{e}$'s inequality yields
\begin{equation*}
\begin{split}
\int_{\Omega}q_t\nabla\tilde{\Phi}\cdot udx&\leq \varepsilon \int_{\Omega}q_t^2dx+C\int_{\Omega}|u|^2dx\\[2mm]
&\leq  \varepsilon \|q_t\|^2+C\|\nabla u\|^2
\end{split}
\end{equation*}
and
\begin{equation*}
\int_{\Omega}\gamma\tilde{\rho}^{\gamma-1}q_{t}\mbox{div} u dx\leq \varepsilon \|q_t\|^2+C\|\nabla u\|^2.
\end{equation*}
Finally, the following nonlinear term is controlled by
\begin{equation*}
\int_{\Omega}\Big\{\gamma \tilde{\rho}^{\gamma-2}f^{0}q_t+f\cdot u_{t}+f^{0}\phi_t\Big\}dx\leq C\delta\mathcal{D}^{2}(t).
\end{equation*}
Therefore, the proof of Lemma \ref{lemma3} is completed.
\end{proof}

\begin{lem}\label{lemma4}
Under the assumptions in Proposition \ref{prop}, there exists a constant $C>0$ independent of $t$ such that
\begin{equation}\label{lem4}
\begin{split}
&\frac{1}{2}\frac{d}{dt}\int_{\Omega}\Big(\mu|\nabla u_t|^2+(\mu+\lambda)|\mbox{div}u_t|^2\Big)dx
-\frac{d}{dt}\int_{\Omega}\gamma\tilde{\rho}^{\gamma-1}q_t\mbox{div}u_tdx+C\left(\|q_{tt}\|^2+\|u_{tt}\|^2\right) \\[1mm]
&\leq C\Big(\|\nabla u_t\|^2+\|\nabla u\|^2+\|u_t\|^2+\|q_t\|^2\Big)+C\delta\mathcal{D}(t).
\end{split}
\end{equation}

\end{lem}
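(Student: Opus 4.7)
The estimate at the $\partial_t$ level parallels Lemma~\ref{lemma3} very closely: I would first differentiate $(\ref{per1})_1$ and $(\ref{per1})_2$ with respect to $t$ (noting that the Dirichlet condition $u|_{\partial\Omega}=0$ forces $u_t|_{\partial\Omega}=u_{tt}|_{\partial\Omega}=0$, so boundary terms in integration by parts vanish), then test the $\partial_t$-differentiated momentum equation with $u_{tt}$ and the $\partial_t$-differentiated mass equation with $\gamma\tilde\rho^{\gamma-2}q_{tt}$, and add. The leading contributions from the left-hand side are the dissipation producing $\tfrac12\tfrac{d}{dt}\int_\Omega(\mu|\nabla u_t|^2+(\mu+\lambda)|\text{div}\,u_t|^2)\,dx$, together with the coercive quantities $\int_\Omega \rho|u_{tt}|^2\,dx$ and $\int_\Omega \gamma\tilde\rho^{\gamma-2}q_{tt}^2\,dx$, which after using $\rho=\tilde\rho+q$ and the smallness hypothesis will dominate $C(\|u_{tt}\|^2+\|q_{tt}\|^2)$.

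\textbf{Step A (coupling terms).} The time derivative of $(\ref{per1})_2$ tested against $u_{tt}$ produces three cross terms involving $\nabla(\gamma\tilde\rho^{\gamma-1}q_t)\cdot u_{tt}$, $-\tilde\rho\nabla\phi_t\cdot u_{tt}$, and $-q_t\nabla\tilde\Phi\cdot u_{tt}$. Exactly as in Lemma~\ref{lemma3},
\begin{equation*}
\int_\Omega\nabla(\gamma\tilde\rho^{\gamma-1}q_t)\cdot u_{tt}\,dx=-\frac{d}{dt}\int_\Omega\gamma\tilde\rho^{\gamma-1}q_t\,\text{div}\,u_t\,dx+\int_\Omega\gamma\tilde\rho^{\gamma-1}q_{tt}\,\text{div}\,u_t\,dx,
\end{equation*}
and the second piece is cancelled against the term $\int_\Omega\gamma\tilde\rho^{\gamma-1}q_{tt}\,\text{div}\,u_t\,dx$ coming from testing $\partial_t(\ref{per1})_1$ with $\gamma\tilde\rho^{\gamma-2}q_{tt}$ (up to smooth coefficient commutators absorbed into the $\|\nabla u\|^2$ and $\|q_t\|^2$ terms on the right). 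Unlike in Lemma~\ref{lemma3}, the $\phi$-type couplings are \emph{not} kept as $\tfrac{d}{dt}$ boundary terms; instead, the key observation is that differentiating $\Delta\phi_t=q_t=-\text{div}(\rho u)$ once more and using Lemma~\ref{lem-neu} with $u_t|_{\partial\Omega}=0$ yields the analogue of \eqref{imp1}, namely $\|\nabla\phi_t\|^2\leq C\|\nabla u\|^2$ and, more pertinently, the cross terms $-\int_\Omega\tilde\rho\nabla\phi_t\cdot u_{tt}\,dx$ and $-\int_\Omega q_t\nabla\tilde\Phi\cdot u_{tt}\,dx$ are bounded via Cauchy's inequality by $\varepsilon\|u_{tt}\|^2+C(\|\nabla u\|^2+\|q_t\|^2)$, with the $\varepsilon\|u_{tt}\|^2$ absorbed into the coercive term $\int_\Omega\rho|u_{tt}|^2\,dx$ on the left.

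\textbf{Step B (commutators and nonlinear errors).} Differentiating in $t$ produces genuine commutator errors: $\tfrac12\int_\Omega\rho_t|u_{tt}|^2\,dx$ from moving $\partial_t$ inside $\rho u_t$, cross terms $\int_\Omega\rho_t u_t\cdot u_{tt}\,dx$, and contributions from $f_t$, $f^0_t$. Each of these is cubic or higher in $(q,u,\phi)$ and their derivatives; using the Sobolev embedding $H^2(\Omega)\hookrightarrow L^\infty(\Omega)$ together with the a~priori smallness $\mathcal{E}(t)\leq\delta^2$ (and hence $\|\rho_t\|_{L^\infty}\leq C\|q_t\|_2\leq C\delta$ from $(\ref{per1})_1$), every such term is bounded by $C\delta\mathcal{D}(t)$.

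\textbf{Main obstacle.} The subtlest point is getting the desired $C\|q_{tt}\|^2$ on the left with the correct sign. Testing $\partial_t(\ref{per1})_1$ with $\gamma\tilde\rho^{\gamma-2}q_{tt}$ gives $\int_\Omega\gamma\tilde\rho^{\gamma-2}q_{tt}^2\,dx$ directly, but one must be careful that the additional terms $\int_\Omega\gamma\tilde\rho^{\gamma-2}\partial_t(\tilde\rho\,\text{div}\,u+u\cdot\nabla\tilde\rho)q_{tt}\,dx$ and $\int_\Omega\gamma\tilde\rho^{\gamma-2}f^0_t q_{tt}\,dx$ are controlled by $\varepsilon\|q_{tt}\|^2+C(\|\nabla u_t\|^2+\|u_t\|^2)+C\delta\mathcal{D}(t)$; the nonlinear piece $f^0_t=-\partial_t\text{div}(qu)$ is dangerous because it contains $q_t\nabla u$ and $\nabla q_t\cdot u$, but both factor as $\|\cdot\|_{L^\infty}\|\cdot\|_{L^2}$ with one factor bounded by $\delta$. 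After absorbing all $\varepsilon$ terms into the coercive left-hand sides and collecting, the resulting inequality matches \eqref{lem4} exactly.
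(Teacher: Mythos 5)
Your proposal follows essentially the same route as the paper: differentiate the system in $t$, test with $u_{tt}$ and $\gamma\tilde\rho^{\gamma-2}q_{tt}$, keep the pressure coupling as a $\tfrac{d}{dt}\int\gamma\tilde\rho^{\gamma-1}q_t\,\mathrm{div}\,u_t\,dx$ term, bound the $\nabla\phi_t$ and $\nabla\tilde\Phi$ couplings directly by Cauchy's inequality together with \eqref{imp1}, and absorb the nonlinear and commutator errors into $C\delta\mathcal{D}(t)$. One correction in Step A: the two cross terms do \emph{not} cancel — integrating by parts in $\int_\Omega\nabla(\gamma\tilde\rho^{\gamma-1}q_t)\cdot u_{tt}\,dx$ produces $+\int_\Omega\gamma\tilde\rho^{\gamma-1}q_{tt}\,\mathrm{div}\,u_t\,dx$ with the \emph{same} sign as the term coming from the mass equation, so they reinforce and leave $2\int_\Omega\gamma\tilde\rho^{\gamma-1}q_{tt}\,\mathrm{div}\,u_t\,dx$ (exactly as in the paper's \eqref{lem4-2}); this leftover is harmless, being bounded by $\varepsilon\|q_{tt}\|^2+C\|\nabla u_t\|^2$ with $\|\nabla u_t\|^2$ already allowed on the right of \eqref{lem4}, so the lemma still follows.
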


\begin{proof}
Taking $\partial_{t}$ to $(\ref{per1})_1$,  $(\ref{per1})_2$, multiplying the resulting identities by $\gamma\tilde{\rho}^{\gamma-2}q_{tt}$ and $u_{tt}$ respectively, and summing up them, the following equation is arrived:
\begin{equation}\label{lem4-1}
\begin{split}
\frac{1}{2}&\frac{d}{dt}\int_{\Omega}\Big(\mu|\nabla u_t|^2+(\mu+\lambda)|div u_t|^2\Big)dx
+\int_{\Omega}\gamma \tilde{\rho}^{\gamma-2}q_{tt}^2dx+\int_{\Omega}{\rho}|u_{tt}|^2dx+\int_{\Omega}\rho_tu_t\cdot u_{tt}dx\\[2mm]
&+\int_{\Omega}\gamma\tilde{\rho}^{\gamma-1}\mbox{div} u_{t}q_{tt}dx
+\int_{\Omega}\nabla(\gamma\tilde{\rho}^{\gamma-1}q_t)\cdot u_{tt}dx\\[2mm]
&+\int_{\Omega}\gamma\tilde{\rho}^{\gamma-2}u_t\cdot\nabla\tilde{\rho}q_{tt}dx-\int_{\Omega}q_t\nabla\tilde{\Phi}\cdot u_{tt}dx
-\int_{\Omega}\tilde{\rho}\nabla\phi_t\cdot u_{tt}dx\\[2mm]
=&\int_{\Omega}\left(\gamma{\tilde{\rho}}^{\gamma-2}f_t^{0}q_{tt}+f_t\cdot u_{tt}\right)dx.
\end{split}
\end{equation}
The second row on the left-hand side of (\ref{lem4-1}) becomes
\begin{equation}\label{lem4-2}
\begin{split}
\int_{\Omega}\left[\gamma\tilde{\rho}^{\gamma-1}\mbox{div} u_{t}q_{tt}
+\nabla(\gamma\tilde{\rho}^{\gamma-1}q_t)\cdot u_{tt}\right]dx
&=\int_{\Omega}\gamma\tilde{\rho}^{\gamma-1}\mbox{div} u_{t}q_{tt}dx
-\int_{\Omega}\gamma\tilde{\rho}^{\gamma-1}q_t \mbox{div}u_{tt}dx\\[2mm]
&=-\frac{d}{dt}\int_{\Omega}\gamma\tilde{\rho}^{\gamma-1}\mbox{div} u_{t}q_{t}dx+2 \int_{\Omega}\gamma\tilde{\rho}^{\gamma-1}\mbox{div} u_{t}q_{tt}dx\\[2mm]
&\geq -\frac{d}{dt}\int_{\Omega}\gamma\tilde{\rho}^{\gamma-1}\mbox{div} u_{t}q_{t}dx- \varepsilon \|q_{tt}\|^2-C\|\nabla u_{t}\|^2.
\end{split}
\end{equation}
Different from that in Lemma \ref{lemma3}, the third row on the left-hand side of (\ref{lem4-1}) is bounded by
\begin{equation}\label{lem4-3}
\begin{split}
&\int_{\Omega}\gamma\tilde{\rho}^{\gamma-2}u_t\cdot\nabla\tilde{\rho}q_{tt}dx-\int_{\Omega}q_t\nabla\tilde{\Phi}\cdot u_{tt}dx-\int_{\Omega}\tilde{\rho}\nabla\phi_t\cdot u_{tt}dx\\[2mm]
&\leq \varepsilon \Big(\|q_{tt}\|^2+ \|u_{tt}\|^2\Big)+C\Big(\|u_{t}\|^2+\|\nabla\phi_t\|^2+\|q_t\|^2\Big)\\[2mm]
&\leq \varepsilon \Big(\|q_{tt}\|^2+ \|u_{tt}\|^2\Big)+C\Big(\|u_{t}\|^2+\|\nabla u\|^2+\|q_t\|^2\Big),
\end{split}
\end{equation}
where we have used Cauchy's inequality and the fact (\ref{imp1}). Consequently, combining (\ref{lem4-1})-(\ref{lem4-3}) yields the desired result (\ref{lem4}).
\end{proof}

We will use the momentum equation in view of (\ref{per3-2}) to get the $H^1$-norm of $q$ in the following lemma.
\begin{lem}\label{lemma5}
Under the conditions in Proposition \ref{prop},  it holds that
\begin{equation}\label{lem5}
\|\nabla\phi\|^2+\|q\|^2+\|\nabla q\|^2
\leq C\Big(\|u_t\|^2+\|D^2u\|^2\Big)+C\delta\mathcal{D}(t).
\end{equation}
\end{lem}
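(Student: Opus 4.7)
\medskip

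\noindent\emph{Proof plan for Lemma \ref{lemma5}.} The plan is to use the reformulated momentum equation (\ref{per3-2}), which can be rearranged as
\begin{equation*}
\nabla\psi = -u_{t}+\mu\tilde{\rho}^{-1}\Delta u+(\mu+\lambda)\tilde{\rho}^{-1}\nabla(\mbox{div}\,u)+g,\qquad \psi:=\gamma\tilde{\rho}^{\gamma-2}q-\phi,
\end{equation*}
so that taking $L^{2}$ norms on both sides and using that $g$ is at least quadratic in $(q,u,\nabla\phi)$ (hence $\|g\|^{2}\le C\delta\mathcal{D}(t)$ under the smallness assumption) immediately yields the crucial pointwise bound
\begin{equation*}
\|\nabla\psi\|^{2}\le C\bigl(\|u_{t}\|^{2}+\|D^{2}u\|^{2}\bigr)+C\delta\mathcal{D}(t).
\end{equation*}
This already controls the combination $\nabla(\gamma\tilde{\rho}^{\gamma-2}q-\phi)$; the task is to disentangle it to recover $\|\nabla\phi\|$, $\|q\|$ and $\|\nabla q\|$ separately.

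Next I would test the same identity against $\nabla\phi$. The key algebraic observation is that the left-hand side integrates by parts cleanly thanks to the Neumann boundary condition $\nabla\phi\cdot\nu|_{\partial\Omega}=0$ and the Poisson equation $\Delta\phi=q$: one finds
\begin{equation*}
\int_{\Omega}\nabla\psi\cdot\nabla\phi\,dx=-\int_{\Omega}\psi q\,dx=-\gamma\int_{\Omega}\tilde{\rho}^{\gamma-2}q^{2}\,dx+\int_{\Omega}\phi\,\Delta\phi\,dx=-\gamma\int_{\Omega}\tilde{\rho}^{\gamma-2}q^{2}\,dx-\|\nabla\phi\|^{2}.
\end{equation*}
The right-hand side, on the other hand, is estimated termwise by Cauchy's inequality: the $u_{t}$, $\Delta u$ and $\nabla\mbox{div}\,u$ contributions give $\varepsilon\|\nabla\phi\|^{2}+C(\|u_{t}\|^{2}+\|D^{2}u\|^{2})$, while the $g$ term is absorbed into $C\delta\mathcal{D}(t)$. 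Since $\tilde{\rho}(x)>0$ is bounded below on $\bar{\Omega}$, choosing $\varepsilon$ small gives
\begin{equation*}
\|q\|^{2}+\|\nabla\phi\|^{2}\le C\bigl(\|u_{t}\|^{2}+\|D^{2}u\|^{2}\bigr)+C\delta\mathcal{D}(t).
\end{equation*}

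Finally, to recover $\|\nabla q\|$, I would differentiate the identity $\gamma\tilde{\rho}^{\gamma-2}q=\psi+\phi$ to obtain
\begin{equation*}
\gamma\tilde{\rho}^{\gamma-2}\nabla q=\nabla\psi+\nabla\phi-\gamma q\,\nabla\tilde{\rho}^{\gamma-2},
\end{equation*}
and use the smoothness and positivity of $\tilde{\rho}$ together with the two preceding bounds on $\|\nabla\psi\|$, $\|\nabla\phi\|$, $\|q\|$. Adding everything yields (\ref{lem5}). The main (and essentially only) obstacle is spotting the testing function $\nabla\phi$ that turns $\int\nabla\psi\cdot\nabla\phi$ into a coercive expression in $q$ and $\nabla\phi$; once this is seen, the remainder is routine Cauchy--Schwarz absorption and use of the already-established $L^{2}$ control on $\nabla\psi$.
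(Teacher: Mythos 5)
Your proposal is correct and follows essentially the same route as the paper: testing the rearranged momentum equation (\ref{per3-2}) with $\nabla\phi$, using the Neumann condition and $\Delta\phi=q$ to produce the coercive quantity $\|\nabla\phi\|^2+\gamma\int_\Omega\tilde\rho^{\gamma-2}q^2\,dx$, and then recovering $\|\nabla q\|$ from the $L^2$ control of $\nabla(\gamma\tilde\rho^{\gamma-2}q-\phi)$ together with the smoothness and positivity of $\tilde\rho$. The only cosmetic difference is that you isolate the bound on $\|\nabla\psi\|$ as an explicit first step, which the paper leaves implicit.
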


\begin{proof}
The momentum equation (\ref{per3-2}) could be rewritten as
\begin{equation}\label{per2}
-\nabla\phi+\nabla(\gamma\tilde{\rho}^{\gamma-2}q)=-u_t+\frac{\mu}{q+\tilde{\rho}}\Delta u
+\frac{\mu+\lambda}{q+\tilde{\rho}}\nabla \mbox{div}u-(u\cdot\nabla)u+k(q).
\end{equation}
Taking the inner product of  (\ref{per2}) with  $-\nabla\phi$, and  using the boudary condition $\nabla\phi\cdot\nu|_{\partial\Omega}=0 $,
the left-hand side becomes
\begin{equation*}
\int_{\Omega}|\nabla\phi|^2dx+\gamma\int_{\Omega}\tilde{\rho}^{\gamma-2}q^2dx
\end{equation*}
due to
\begin{equation*}
-\int_{\Omega}\nabla(\gamma\tilde{\rho}^{\gamma-2}q)\cdot\nabla\phi dx=\int_{\Omega}\gamma\tilde{\rho}^{\gamma-2}q\Delta\phi dx=\int_{\Omega}\gamma\tilde{\rho}^{\gamma-2}q^2dx.
\end{equation*}
Therefore, we obtain
\begin{equation*}
\begin{split}
\frac{1}{2}\int_{\Omega}|\nabla\phi|^2dx+\gamma\int_{\Omega}\tilde{\rho}^{\gamma-2}q^2dx&\leq C\Big(\|u_t\|^2+\|D^2u\|^2\Big)
+C\Big(\|(u\cdot\nabla)u\|^2+\|k(q)\|^2\Big)\\
&\leq  C\Big(\|u_t\|^2+\|D^2u\|^2\Big)+C\delta\mathcal{D}(t)
\end{split}
\end{equation*}
by using (\ref{per2}) and Cauchy's inequality. Furthermore,
\begin{equation*}
\|\nabla q\|^2\leq C\Big(\|u_t\|^2+\|D^2u\|^2\Big)+C\delta\mathcal{D}(t),
\end{equation*}
by means of
\begin{equation*}
\nabla(\tilde{\rho}^{\gamma-2}q)=(\gamma-2)\tilde{\rho}^{\gamma-3}\nabla\tilde{\rho}q+\tilde{\rho}^{\gamma-2}\nabla q,
\end{equation*}
and
\begin{equation*}
\|\nabla(\tilde{\rho}^{\gamma-2}q)\|^2\leq C\left(\|\nabla\phi\|^2+\|u_t\|^2+\|D^2u\|^2\right)+C\delta\mathcal{D}(t).
\end{equation*}
Hence, we have finished the proof of this lemma.
\end{proof}

From now on, we shall separate the estimates into that away from the boundary and that near the boundary. Let $\chi_0(x)$ be any fixed $C^{\infty}(\Omega)$ cut-off function such that $supp\chi_{0}\equiv K\subset\subset\Omega$, and $\chi_0\equiv1$ in $K_1\subset\subset K$. With the help of $\chi_0(x)$, we have the estimates in the interior domain.

\begin{lem}\label{lemma6}
Assume that the conditions in Proposition \ref{prop} hold, then  for any positive $\epsilon$, it holds that
\begin{equation}\label{lem6-2}
\begin{split}
 \frac{\gamma}{2}&\frac{d}{dt}\int_{\Omega}\chi_0^2\tilde{\rho}^{\gamma-4}|D q|^2dx
-\frac{d}{dt}\Big\{\int_{\Omega}\chi_0^2\tilde{\rho}^{-2} \nabla q \cdot \nabla\phi dx
-\int_{\Omega}\chi_0^2\tilde{\rho}^{-2}\nabla(\gamma\tilde{\rho}^{\gamma-2})\cdot\nabla q q dx\Big\}\\[2mm]
&+C\left\{\|\chi_0D(\gamma\tilde{\rho}^{\gamma-2} q-\phi)\|^2
+\|\chi_0D^2u\|^2+\left\|\chi_0D\frac{dq}{dt}\right\|^2\right\}\\[2mm]
\leq&\epsilon \|D q\|^2+C\Big\{\|q_t\|^2+\|D u\|^2+\|u_t\|^2\Big\}+C\delta\mathcal{D}(t),
\end{split}
\end{equation}
and the estimates of  derivatives of high order:
 \begin{equation}\label{lem6-3}
\begin{split}
 \frac{\gamma}{2}&\frac{d}{dt}\int_{\Omega}\chi_0^2\tilde{\rho}^{\gamma-4}|D^{2} q|^2dx
-\frac{d}{dt}\Big\{\int_{\Omega}\chi_0^2\tilde{\rho}^{-2} D_{ij}q  D_{ij} \phi dx\\[2mm]
&-2\int_{\Omega}\chi_0^2\tilde{\rho}^{-2}D_{i}(\gamma\tilde{\rho}^{\gamma-2})\cdot D_{ij}q D_{j}q dx
-\int_{\Omega}\chi_0^2\tilde{\rho}^{-2}D_{ij}(\gamma\tilde{\rho}^{\gamma-2})D_{ij}q q dx\Big\}\\[2mm]
&+C\left\{\|\chi_0D^2(\gamma\tilde{\rho}^{\gamma-2} q-\phi)\|^2
+\|\chi_0D^3u\|^2+\left\|\chi_0D^2\frac{dq}{dt}\right\|^2
\right\}\\[2mm]
\leq&\epsilon \|D^2 q\|^2+C\Big\{\|q_t\|_1^2+\|D^{2} u\|^2+\|Du_t\|^2\Big\}+C\delta\mathcal{D}(t),
\end{split}
\end{equation}
and
\begin{equation}\label{lem6-4}
\begin{split}
\frac{\gamma}{2}&\frac{d}{dt}\int_{\Omega}\chi_0^2\tilde{\rho}^{\gamma-4}|D^{3} q|^2dx
-\frac{d}{dt}\Big\{\int_{\Omega}\chi_0^2\tilde{\rho}^{-2} D_{ijk}  q  D_{ijk} \phi dx
-3\int_{\Omega}\chi_0^2\tilde{\rho}^{-2}D_{i}(\gamma\tilde{\rho}^{\gamma-2})\cdot D_{ijk}q D_{jk}q dx\\[2mm]
&-3\int_{\Omega}\chi_0^2\tilde{\rho}^{-2}D_{ij}(\gamma\tilde{\rho}^{\gamma-2})D_{ijk}q D_{k}q dx
-\int_{\Omega}\chi_0^2\tilde{\rho}^{-2}D_{ijk}(\gamma\tilde{\rho}^{\gamma-2})D_{ijk}q q dx\Big\}\\[2mm]
&+C\left\{\|\chi_0D^3(\gamma\tilde{\rho}^{\gamma-2} q-\phi)\|^2
+\|\chi_0D^4u\|^2+\left\|\chi_0D^{3}\frac{dq}{dt}\right\|^2
\right\}\\[2mm]
\leq&\epsilon \|D^3 q\|^2+C\Big\{\|q_t\|_2^2+\|D^{3} u\|^2+\|D^{2}u_t\|^2\Big\}+C\delta\mathcal{D}(t).
\end{split}
\end{equation}

\end{lem}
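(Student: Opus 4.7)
The three estimates (\ref{lem6-2})--(\ref{lem6-4}) are structurally parallel: each bundles together a time derivative of $\int\chi_0^2\tilde{\rho}^{\gamma-4}|D^k q|^2\,dx$, a collection of correction time derivatives, and the coercive quantities $\|\chi_0 D^k(\gamma\tilde{\rho}^{\gamma-2}q-\phi)\|^2$, $\|\chi_0 D^{k+1}u\|^2$ and $\|\chi_0 D^k(dq/dt)\|^2$, for $k=1,2,3$. I will describe the plan for $k=1$; the higher $k$ follow the same scheme, and the additional correction terms displayed in (\ref{lem6-3})--(\ref{lem6-4}) appear as commutator pieces involving $D^m(\gamma\tilde{\rho}^{\gamma-2})$ that can be reorganized into exact time derivatives.

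\textbf{Main identity.} The plan is to rewrite (\ref{per3-2}) as
\[
\nabla(\gamma\tilde{\rho}^{\gamma-2}q-\phi)=-u_t+\mu\tilde{\rho}^{-1}\Delta u+(\mu+\lambda)\tilde{\rho}^{-1}\nabla\mbox{div}\,u+g,
\]
so that the ``effective pressure'' $\gamma\tilde{\rho}^{\gamma-2}q-\phi$ inherits an extra spatial derivative from the viscous term. Applying $D_l$ to this identity and testing against $\chi_0^2\tilde{\rho}^{-2}\nabla D_l q$, summed in $l=1,2,3$, I would read off $\|\chi_0 D(\gamma\tilde{\rho}^{\gamma-2}q-\phi)\|^2$ on the left after moving the $u_t$, $\Delta u$, $\nabla\mbox{div}\,u$ and $g$ contributions to the right and using Cauchy's inequality against $\|u_t\|^2+\|\chi_0 D^2u\|^2+C\delta\mathcal{D}(t)$. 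Because $\chi_0$ has compact support in $\Omega$, every integration by parts is free of boundary contributions; the viscous pieces simultaneously supply $\|\chi_0 D^2u\|^2$ by elliptic coercivity.

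\textbf{Producing the time derivative and the Poisson corrections.} To generate the prefactor $\frac{\gamma}{2}\frac{d}{dt}\int\chi_0^2\tilde{\rho}^{\gamma-4}|Dq|^2\,dx$ I would pair $D_l(\ref{per3-1})$ with $\chi_0^2\gamma\tilde{\rho}^{\gamma-4}D_l q$: the term $\partial_tD_lq\cdot D_lq$ yields the time derivative, and $D_l\mbox{div}(\tilde{\rho}u)\cdot D_lq$ after integration by parts reproduces $\|\chi_0 D(dq/dt)\|^2$ through the relation $dq/dt=-\mbox{div}(\tilde{\rho}u)+g^0$. The cross-coupling with $\phi$ is rewritten using the Poisson equation $(\ref{per1})_3$: the integrand $\chi_0^2\tilde{\rho}^{-2}\nabla D_l\phi\cdot\nabla D_lq$ converts, via integration by parts and the identity $\Delta\phi=q$, into a full time derivative $-\frac{d}{dt}\int\chi_0^2\tilde{\rho}^{-2}\nabla q\cdot\nabla\phi\,dx$ modulo lower-order pieces; the analogous manipulation for the $q$-self-coupling extracts the second correction $\frac{d}{dt}\int\chi_0^2\tilde{\rho}^{-2}\nabla(\gamma\tilde{\rho}^{\gamma-2})\cdot\nabla q\,q\,dx$, matching the bracketed corrections in (\ref{lem6-2}). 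Residual commutators $[D,\chi_0^2]$ and $[D,\tilde{\rho}^{-2}]$ contribute terms bounded by $\epsilon\|Dq\|^2+C(\|q_t\|^2+\|Du\|^2+\|u_t\|^2)+C\delta\mathcal{D}(t)$.

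\textbf{Main obstacle and the higher-order cases.} The delicate point is that the correction terms on the left must emerge as \emph{exact} time derivatives, not merely bounded perturbations; this forces careful bookkeeping when commuting spatial derivatives through $\tilde{\rho}^{\gamma-2}$ and when replacing $q_t$ by $\Delta\phi_t$ via the Poisson equation. For (\ref{lem6-3}) and (\ref{lem6-4}) one applies $D_{ij}$ and $D_{ijk}$, respectively, to the same identities and tests against $\chi_0^2\tilde{\rho}^{-2}D^{k+1}q$; each additional derivative spawns commutators with $\tilde{\rho}^{-2}$ and $\chi_0^2$ which either enter the displayed time-derivative brackets (as the $D_i(\gamma\tilde{\rho}^{\gamma-2})D_{ij}q\,D_jq$, $D_{ij}(\gamma\tilde{\rho}^{\gamma-2})D_{ij}q\,q$, etc., terms) or are controlled by $\epsilon\|D^kq\|^2$ together with the already-available bounds $\|q_t\|_{k-1}$ and $\|D^{k-1}u_t\|$, which is precisely why the right-hand sides of (\ref{lem6-3})--(\ref{lem6-4}) display $\|q_t\|_1^2,\|q_t\|_2^2$ and $\|Du_t\|^2,\|D^2u_t\|^2$ respectively. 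The chief technical challenge throughout is ensuring that the bookkeeping closes without losing a derivative, which is where the fact $\nabla(\gamma\tilde{\rho}^{\gamma-2}q-\phi)$ is ``elliptically controlled'' by $u_t$ and $\Delta u$ is repeatedly invoked.
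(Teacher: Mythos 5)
Your overall philosophy (energy estimates localized by $\chi_0^2$, the effective pressure $\gamma\tilde{\rho}^{\gamma-2}q-\phi$, time-derivative corrections from commutators with $\tilde{\rho}^{\gamma-2}$, and the use of $(\ref{per1})_3$ and $\|\nabla\phi_t\|\leq C\|\nabla u\|$ to absorb the coupling) matches the paper, but the mechanism you describe for producing the three dissipation terms on the left of (\ref{lem6-2}) has genuine gaps. The paper uses \emph{two distinct pairings}, and your plan collapses them into one in a way that does not close. Pairing one tests $\nabla(\ref{per1})_1$ with $(2\mu+\lambda)\chi_0^2\tilde{\rho}^{-2}\nabla(\gamma\tilde{\rho}^{\gamma-2}q-\phi)$ and (\ref{per3-2}) with $\chi_0^2\nabla(\gamma\tilde{\rho}^{\gamma-2}q-\phi)$; the specific weight $(2\mu+\lambda)\tilde{\rho}^{-2}$ is chosen so that the $(2\mu+\lambda)\nabla\operatorname{div}u$ part of the viscous term cancels against the $\nabla\operatorname{div}(\tilde{\rho}u)$ contribution from the continuity equation, leaving only $\mu(\Delta u-\nabla\operatorname{div}u)=\mu\,\partial_{x_j}(\partial_{x_j}u^i-\partial_{x_i}u^j)$, whose antisymmetric structure lets one integrate by parts so that the derivative falls only on $\chi_0^2\tilde{\rho}^{-1}$ and the cost is $C\|Du\|^2$, \emph{not} $C\|D^2u\|^2$. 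Your plan instead handles the viscous term ``using Cauchy's inequality against $\|u_t\|^2+\|\chi_0 D^2u\|^2$'': that puts $C\|\chi_0 D^2u\|^2$ on the right-hand side, where it is not yet controlled at this stage of the scheme, and it cannot ``simultaneously supply $\|\chi_0 D^2u\|^2$ by elliptic coercivity'' on the left — the same term cannot be both paid for and gained.

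The second and third good terms, $\|\chi_0 D^2u\|^2$ and $\|\chi_0 D\frac{dq}{dt}\|^2$, come from a \emph{separate} pairing that your proposal omits: $\nabla(\ref{per1})_1\cdot\chi_0^2\nabla(\gamma\tilde{\rho}^{\gamma-2}q-\phi)+\nabla(\ref{per3-2})^i\cdot\chi_0^2\nabla(\tilde{\rho}u^i)$. There the squares arise because $-\int\chi_0^2\partial_{x_j}(\tilde{\rho}^{-1}\Delta u^i)\partial_{x_j}(\tilde{\rho}u^i)\,dx\gtrsim\int\chi_0^2|D^2u|^2dx-C\|Du\|^2$ and likewise $-\int\chi_0^2\partial_{x_j}(\tilde{\rho}^{-1}\partial_{x_i}\operatorname{div}u)\partial_{x_j}(\tilde{\rho}u^i)\,dx\gtrsim\int\chi_0^2|D\operatorname{div}u|^2dx-C\|Du\|^2\gtrsim\int\chi_0^2|D\frac{dq}{dt}|^2dx-\cdots$, while the two cross terms coupling $\nabla\operatorname{div}(\tilde{\rho}u)$ with $\nabla(\gamma\tilde{\rho}^{\gamma-2}q-\phi)$ combine by integration by parts into a harmless lower-order remainder. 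Your claim that the linear cross term ``$D_l\operatorname{div}(\tilde{\rho}u)\cdot D_lq$ after integration by parts reproduces $\|\chi_0 D(dq/dt)\|^2$'' is not correct — a cross term cannot produce a signed square. Similarly, the correction $-\frac{d}{dt}\int\chi_0^2\tilde{\rho}^{-2}\nabla q\cdot\nabla\phi\,dx$ does not come from the integrand $\chi_0^2\tilde{\rho}^{-2}\nabla D_l\phi\cdot\nabla D_lq$ (which contains no time derivative); it comes from $\int\chi_0^2\tilde{\rho}^{-2}\nabla q_t\cdot\nabla(-\phi)\,dx$ after pulling out $\frac{d}{dt}$ and controlling the residue $\int\chi_0^2\tilde{\rho}^{-2}\nabla q\cdot\nabla\phi_t\,dx$ via (\ref{imp1}). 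These are not bookkeeping details: without the two-pairing structure and the antisymmetric/cancellation tricks for the viscous term, the estimate (\ref{lem6-2}) as stated cannot be derived, and the same issues propagate to (\ref{lem6-3}) and (\ref{lem6-4}).
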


\begin{proof}
 Testing $\nabla (\ref{per1})_1$, (\ref{per3-2}) with  $(2\mu+\lambda)\chi_0^2\tilde{\rho}^{-2}\nabla(\gamma\tilde{\rho}^{\gamma-2} q-\phi)$ and $\chi_0^2\nabla(\gamma\tilde{\rho}^{\gamma-2} q-\phi)$ respectively, then integrating over $\Omega$, one obtains
\begin{equation}\label{lem6-1-1}
\begin{split}
(2\mu&+\lambda)\int_{\Omega}\chi_0^2\tilde{\rho}^{-2}\nabla q_{t}\cdot\nabla(\gamma\tilde{\rho}^{\gamma-2} q-\phi)dx
 +\int_{\Omega}\chi_0^2|\nabla(\gamma\tilde{\rho}^{\gamma-2} q-\phi)|^2dx\\[2mm]
=&\int_{\Omega}\mu\chi_0^2\tilde{\rho}^{-1}\Big(\Delta u-\nabla(\mbox{div}u)\Big)\cdot
 \nabla(\gamma\tilde{\rho}^{\gamma-2} q-\phi)dx
 -\int_{\Omega}\chi_0^2 u_{t}\cdot\nabla(\gamma\tilde{\rho}^{\gamma-2} q-\phi)dx\\[2mm]
 &-(2\mu+\lambda)\int_{\Omega}\chi_0^2\tilde{\rho}^{-2}\Big(\nabla\tilde{\rho}\mbox{div}u+\nabla(u\cdot\nabla\tilde\rho)\Big)
\cdot\nabla(\gamma\tilde{\rho}^{\gamma-2} q-\phi)dx\\[2mm]
 &+\int_{\Omega}\chi_0^2\Big\{(2\mu+\lambda)\tilde{\rho}^{-2}\nabla f^0+g\Big\}\cdot\nabla(\gamma\tilde{\rho}^{\gamma-2} q-\phi)dx.
\end{split}
\end{equation}
The first term on the  left-hand side of (\ref{lem6-1-1}) has the following lower bound:
\begin{equation}\label{first}
\begin{split}
\int_{\Omega}&\chi_0^2\tilde{\rho}^{-2}\nabla q_{t}\cdot\nabla(\gamma\tilde{\rho}^{\gamma-2} q-\phi)dx\\[1mm]
=&\frac{\gamma}{2}\frac{d}{dt}\int_{\Omega}\chi_0^2\tilde{\rho}^{\gamma-4}|\nabla q|^2dx
+\frac{d}{dt}\int_{\Omega}\chi_0^2\tilde{\rho}^{-2}\nabla(\gamma\tilde{\rho}^{\gamma-2})\cdot\nabla q q dx
-\int_{\Omega}\chi_0^2\tilde{\rho}^{-2}\nabla(\gamma\tilde{\rho}^{\gamma-2})\cdot\nabla q q_t dx\\[1mm]
&-\frac{d}{dt}\int_{\Omega}\chi_0^2\tilde{\rho}^{-2} \nabla q \cdot \nabla\phi dx
+\int_{\Omega}\chi_0^2 \tilde{\rho}^{-2}\nabla q \cdot \nabla\phi_t dx\\[2mm]
\geq& \frac{\gamma}{2}\frac{d}{dt}\int_{\Omega}\chi_0^2\tilde{\rho}^{\gamma-4}|\nabla q|^2dx
+\frac{d}{dt}\int_{\Omega}\chi_0^2\tilde{\rho}^{-2}\nabla(\gamma\tilde{\rho}^{\gamma-2})\cdot\nabla q q dx\\[1mm]
&-\frac{d}{dt}\int_{\Omega}\chi_0^2\tilde{\rho}^{-2} \nabla q \cdot \nabla\phi dx
-\varepsilon\|\nabla q\|^2-C\Big(\|q_t\|+\|\nabla u\|^2\Big),
\end{split}
\end{equation}
where we have used the estimate $\|\nabla\phi_t\|^2\leq C\|\nabla u\|^2$ in the last step. The first term on the  right-hand side (\ref{lem6-1-1}) of can be estimated as
\begin{equation*}
\begin{split}
&\int_{\Omega}\mu\chi_0^2\tilde{\rho}^{-1}\Big(\Delta u-\nabla(\mbox{div} u)\Big)\cdot
 \nabla(\gamma\tilde{\rho}^{\gamma-2} q-\phi)dx\\[2mm]
&=-\mu \int_{\Omega}\partial_{x_j}\Big(\chi_0^2\tilde{\rho}^{-1}(\gamma \tilde{\rho}^{\gamma-2}q-\phi)_{x_i}\Big)u^{i}_{x_j} dx
+\mu \int_{\Omega}\partial_{x_j}\Big(\chi_0^2\tilde{\rho}^{-1}(\gamma \tilde{\rho}^{\gamma-2}q-\phi)_{x_i}\Big)u^{j}_{x_i} dx \\[2mm]
&=-\mu \int_{\Omega}\partial_{x_j}(\chi_0^2\tilde\rho^{-1}) (\gamma \tilde{\rho}^{\gamma-2}q-\phi)_{x_i}u^{i}_{x_j} dx
+\mu \int_{\Omega}\partial_{x_j}(\chi_0^2\tilde\rho^{-1})(\gamma \tilde{\rho}^{\gamma-2}q-\phi)_{x_i}u^{j}_{x_i} dx\\[2mm]
&\leq  \frac{1}{4} \int_{\Omega}\chi_0^2|\nabla(\gamma\tilde{\rho}^{\gamma-2} q-\phi)|^2dx+ C\int_{\Omega}|\nabla u|^2dx.
\end{split}
\end{equation*}
 Putting all the above inequalities into (\ref{lem6-1-1}), it implies
\begin{equation}\label{lem6-1}
\begin{split}
 \frac{\gamma}{2}&\frac{d}{dt}\int_{\Omega}\chi_0^2\tilde{\rho}^{\gamma-4}|\nabla q|^2dx
+\gamma(\gamma-2)\frac{d}{dt}\int_{\Omega}\chi_0^2\tilde{\rho}^{\gamma-5}q\nabla q\cdot\nabla\tilde{\rho} dx\\[2mm]
&-\frac{d}{dt}\int_{\Omega}\chi_0^2\tilde{\rho}^{-2} \nabla q \cdot \nabla\phi dx
+\frac{1}{4}\int_{\Omega}\chi_0^2|\nabla(\gamma\tilde{\rho}^{\gamma-2} q-\phi)|^2dx\\[2mm]
\leq&\epsilon \|\nabla q\|^2+C\left(\|q_t\|^2+\|\nabla u\|^2+\|u_t\|^2\right)dx+C\delta\mathcal{D}(t)
\end{split}
\end{equation}
after using Cauchy's inequality.

Next, we deal with  $\nabla(\ref{per1})_1\cdot\chi_0^2\nabla(\gamma\tilde{\rho}^{\gamma-2} q-\phi)+ \nabla(\ref{per3-2})^{i}\cdot \chi_0^2\nabla(\tilde{\rho} u^{i})$, and integrate the yeilding result over $\Omega$ to have
\begin{equation}\label{6step4-3}
\begin{split}
\int_{\Omega}&\chi_0^2  \nabla q_t\cdot \nabla(\gamma\tilde{\rho}^{\gamma-2} q-\phi)dx
+\int_{\Omega}\chi_0^2  \nabla u^{i}_t \cdot\nabla (\tilde{\rho}u^{i})dx\\[2mm]
&+ \int_{\Omega} \Big\{\chi_0^2 \nabla \mbox{div}(\tilde{\rho}u)\cdot \nabla(\gamma\tilde{\rho}^{\gamma-2} q-\phi)
+ \chi_0^2  \nabla\partial_{x_i}(\gamma\tilde\rho^{\gamma-2} q-\phi)\cdot\nabla(\tilde\rho u^{i})\Big\}dx\\[2mm]
&- \int_{\Omega} \Big\{\mu\chi_0^2 \nabla(\tilde{\rho}^{-1}\Delta u^{i})\cdot \nabla(\tilde\rho u^{i})
+ (\mu+\lambda)\chi_0^2  \nabla(\tilde\rho^{-1}\partial_{x_i}(\mbox{div}u))\cdot\nabla(\tilde\rho u^{i})\Big\}dx
\\[2mm]
=&\int_{\Omega}\chi_0^2\nabla f^{0}\cdot\nabla(\gamma\tilde\rho^{\gamma-2}q-\phi)dx+\int_{\Omega}\chi_0^2\nabla g^{i}\cdot\nabla (\tilde\rho u^{i}) dx.
\end{split}
\end{equation}
By the same argument as (\ref{first}), it showes us that
\begin{equation*}
\begin{split}
\int_{\Omega}&\chi_0^2\nabla q_{t}\cdot\nabla(\gamma\tilde{\rho}^{\gamma-2} q-\phi)dx\\[1mm]
\geq &\frac{\gamma}{2}\frac{d}{dt}\int_{\Omega}\chi_0^2\tilde{\rho}^{\gamma-2}|\nabla q|^2dx
+\frac{d}{dt}\int_{\Omega}\chi_0^2\nabla(\gamma\tilde{\rho}^{\gamma-2})\cdot\nabla q q dx\\[2mm]
&-\frac{d}{dt}\int_{\Omega}\chi_0^2 \nabla q \cdot \nabla\phi dx
-\varepsilon\|D q\|^2-C\Big(\|q_t\|^2+\|D u\|^2\Big).
\end{split}
\end{equation*}
 It is easy to see that
\begin{equation*}
\begin{split}
\int_{\Omega} \chi_0^2  \nabla u^{i}_t \cdot\nabla (\tilde{\rho}u^{i}) dx
&=\frac{1}{2}\frac{d}{dt}\int_{\Omega}\chi_0^2|\nabla u|^2dx+\int_{\Omega}\chi_0^2\nabla u^{i}_{t}\cdot\nabla\tilde\rho u^{i}dx\\[2mm]
&=\frac{1}{2}\frac{d}{dt}\int_{\Omega}\chi_0^2|\nabla u|^2dx-\int_{\Omega}u^{i}_{t}\partial_{x_j}\left(\chi_0^2\partial_{x_j}\tilde\rho u^{i}\right)dx\\[2mm]
&\geq\frac{1}{2}\frac{d}{dt}\int_{\Omega}\chi_0^2|\nabla u|^2dx-C(\|D u\|^2+\|u_t\|^2).
\end{split}
\end{equation*}
by Cauchy's inequality and Poincar$\acute{e}$'s inequality. While for the following terms of (\ref{6step4-3}), one has
\begin{equation*}
\begin{split}
& \int_{\Omega} \Big\{\chi_0^2 \nabla \mbox{div}(\tilde{\rho}u)\cdot \nabla(\gamma\tilde{\rho}^{\gamma-2} q-\phi)
+ \chi_0^2  \nabla\partial_{x_i}(\gamma\tilde\rho^{\gamma-2} q-\phi)\cdot\nabla(\tilde\rho u^{i})\Big\}dx\\[2mm]
&=- \int_{\Omega} \partial_{x_i}\chi_0^2\nabla(\gamma\tilde\rho^{\gamma-2}q-\phi)\cdot\nabla(\tilde\rho u^{i})dx\\[2mm]
&\geq -\frac{1}{8}\int_{\Omega} \chi_0^2|\nabla(\gamma\tilde\rho^{\gamma-2} q-\phi)|^2dx -C\|D u\|^2
\end{split}
\end{equation*}
due to Cauchy's inequality and Poincar$\acute{e}$'s inequality.  In the meantime, we have
\begin{equation*}
\begin{split}
- \int_{\Omega} \chi_0^2 \partial_{x_j}(\tilde\rho^{-1}\Delta u^{i})\cdot  \partial_{x_j} (\tilde\rho u^{i})dx
\geq \frac{1}{2}\int_{\Omega} \chi_0^2 |D^2u|^2dx-C\|D u\|^2,
\end{split}
\end{equation*}
by utilizing integration by parts, Cuachy's inequality and the elliptic estimate for bounded domain to have $C^{-1}\|D^2u\|^2\leq \|\Delta u\|^2\leq C\|D^2u\|^2$. It is clear that
\begin{equation*}
\begin{split}
-\int_{\Omega} \chi_0^2 \partial_{x_j}(\tilde\rho^{-1}\partial_{x_i}\mbox{div}u)\cdot \partial_{x_j} (\tilde\rho u^{i})dx
&\geq \frac{1}{2}\int_{\Omega} \chi_0^2 |D \mbox{div}u|^2dx-C\|D u\|^2\\[2mm]
&\geq  C\int_{\Omega} \chi_0^2 \left|D\frac{dq}{dt}\right|^2dx-C\|D u\|^2.
\end{split}
\end{equation*}
Putting all the above inequalities into (\ref{6step4-3}), it implies that
\begin{equation*}
\begin{split}
\frac{d}{dt}&\int_{\Omega}\left(\frac{\gamma}{2}\chi_0^2\tilde{\rho}^{\gamma-2}|\nabla q|^2+\frac{1}{2}\chi_0^2|\nabla u|^2
+\gamma(\gamma-2)\chi_0^2\tilde{\rho}^{\gamma-5}q\nabla q\cdot\nabla\tilde{\rho}
-\chi_0^2 \nabla q \cdot \nabla\phi\right) dx\\[2mm]
&+\int_{\Omega} \Big\{\chi_0^2 |D^2u|^2+ \chi_0^2 |D\mbox{div}u|^2\Big\}dx \\[2mm]
\leq& \frac{1}{8}\int_{\Omega}\chi_0^2 |\nabla(\gamma\tilde\rho^{\gamma-2} q-\phi)|^2dx+C\Big(\|q_t\|^2+\|\nabla u\|^2+\|u_t\|^2\Big)+C\delta\mathcal{D}(t),
\end{split}
\end{equation*}
which together with (\ref{lem6-1}) yields (\ref{lem6-2}).

The estimate (\ref{lem6-3}) for second-order derivatives could be obtained similarly, i.e.,  estimating the following two integrals
 \begin{equation}{\label{int1}}
\int_{\Omega}\left\{D_{ij}(\ref{per1})_1 (2\mu+\lambda)\chi_0^2\tilde{\rho}^{-2}D_{ij}(\gamma\tilde{\rho}^{\gamma-2} q-\phi)+ D_{j}(\ref{per3-2})^{i} \chi_0^2D_{ij}(\gamma\tilde{\rho}^{\gamma-2} q-\phi)\right\}dx=0,
\end{equation}
and
\begin{equation}{\label{int2}}
\int_{\Omega}\left\{D_{jk}(\ref{per1})_1 \chi_0^2D_{jk}(\gamma\tilde{\rho}^{\gamma-2} q-\phi)+ D_{jk}(\ref{per3-2})^{i} \chi_0^2D_{jk}(\tilde{\rho} u^{i})\right\}dx=0.
\end{equation}
Here, we only show how to handle with the following terms for the first integral (\ref{int1}). While other terms for(\ref{int1}) and (\ref{int2}) could be controlled similarly as (\ref{lem6-2}).
\begin{equation}\label{lem6-1-9}
\begin{split}
(2\mu&+\lambda)\int_{\Omega}\chi_0^2\tilde{\rho}^{-2}D_{ij}q_{t} D_{ij}(\gamma\tilde{\rho}^{\gamma-2} q-\phi)dx
 +\int_{\Omega}\chi_0^2|D_{ij}(\gamma\tilde{\rho}^{\gamma-2} q-\phi)|^2dx\\[2mm]
=&\int_{\Omega}\mu\chi_0^2D_{j}\left(\tilde{\rho}^{-1}\Big(\Delta u^{i}- D_{i}(\mbox{div} u)\Big)\right)
 D_{ij}(\gamma\tilde{\rho}^{\gamma-2} q-\phi)dx
 -\int_{\Omega}\chi_0^2 D_{j}u^{i}_{t}D_{ij}(\gamma\tilde{\rho}^{\gamma-2} q-\phi)dx\\[2mm]
 &-(2\mu+\lambda)\int_{\Omega}\chi_0^2D_{j}\left(\tilde{\rho}^{-2}\Big(D_{i}\tilde{\rho}\mbox{div}u+D_{i}(u\cdot\nabla\tilde\rho)\Big)\right)
D_{ij}(\gamma\tilde{\rho}^{\gamma-2} q-\phi)dx\\[2mm]
 &+\int_{\Omega}\chi_0^2\Big\{(2\mu+\lambda)\tilde{\rho}^{-2}D_{ij}f^0+D_{j}g^{i}\Big\}\cdot D_{ij}(\gamma\tilde{\rho}^{\gamma-2} q-\phi)dx\\[2mm]
 &-(4\mu+2\lambda)\int_{\Omega}\chi_0^2\tilde{\rho}^{-2}D_{j}\tilde{\rho}D_{i}\mbox{div} uD_{ij}(\gamma\tilde{\rho}^{\gamma-2} q-\phi).
\end{split}
\end{equation}
The first term on the  left-hand side of (\ref{lem6-1-9}) becomes
\begin{equation}
\begin{split}
\int_{\Omega}&\chi_0^2\tilde{\rho}^{-2}D_{ij} q_{t} D_{ij}  (\gamma\tilde{\rho}^{\gamma-2} q-\phi)dx\\[2mm]
\geq& \frac{\gamma}{2}\frac{d}{dt}\int_{\Omega}\chi_0^2\tilde{\rho}^{\gamma-4}|D^{2} q|^2dx
-\frac{d}{dt}\int_{\Omega}\chi_0^2\tilde{\rho}^{-2} D_{ij}  q \cdot D_{ij} \phi dx\\[2mm]
&+\gamma(\gamma-2)\frac{d}{dt}\int_{\Omega}\chi_0^2\tilde{\rho}^{\gamma-5}q D_{ij}q\cdot D_{ij}\tilde{\rho} dx
+2\gamma(\gamma-2)\frac{d}{dt}\int_{\Omega}\chi_0^2\tilde{\rho}^{\gamma-5}q D_{ij}q\cdot D_{i}\tilde{\rho} D_{j}qdx\\[2mm]
&+\gamma(\gamma-2)(\gamma-3)\frac{d}{dt}\int_{\Omega}\chi_0^2\tilde{\rho}^{\gamma-6}q D_{ij}q\cdot D_i\tilde{\rho} D_{j}\tilde{\rho} dx
-\varepsilon\|D^2 q\|^2-C\Big(\|q_t\|+\|\nabla q_t\|^2\Big),
\end{split}
\end{equation}
where we have used the elliptic estimate $\|D^2\phi_t\|^2\leq C\|q_t\|^2$. The first term on the  right-hand side of (\ref{lem6-1-9}) could be bounded by
\begin{equation*}
\begin{split}
\mu&\int_{\Omega}\chi_0^2D_{j}\left(\tilde{\rho}^{-1}\Big(\Delta u^{i}- D_{i}(\mbox{div} u)\Big)\right)
 D_{ij}(\gamma\tilde{\rho}^{\gamma-2} q-\phi)dx\\[2mm]
=&\mu\int_{\Omega}\chi_0^2\tilde{\rho}^{-1}D_{j}\left(\Big(\Delta u^{i}- D_{i}(\mbox{div} u)\Big)\right)
 D_{ij}(\gamma\tilde{\rho}^{\gamma-2} q-\phi)dx\\[2mm]
 &+\mu\int_{\Omega}\chi_0^2D_{j}\tilde{\rho}^{-1}\Big(\Delta u^{i}- D_{i}(\mbox{div} u)\Big)
 D_{ij}(\gamma\tilde{\rho}^{\gamma-2} q-\phi)dx \\[2mm]
 =&-\mu\int_{\Omega}D_{k}(\chi_0^2\tilde{\rho}^{-1})(D_{jk} u^{i}-D_{ij}u^{k})  D_{ij}(\gamma\tilde{\rho}^{\gamma-2} q-\phi)dx
\\[2mm]
 &+\mu\int_{\Omega}\chi_0^2D_{j}\tilde{\rho}^{-1}\Big(\Delta u^{i}-D_{i}(\mbox{div} u)\Big)
 D_{ij}(\gamma\tilde{\rho}^{\gamma-2} q-\phi)dx \\[2mm]
\leq&  \frac{1}{4} \int_{\Omega}\chi_0^2|D^2(\gamma\tilde{\rho}^{\gamma-2} q-\phi)|^2dx+ C\int_{\Omega}|D^2 u|^2dx.
\end{split}
\end{equation*}
 At  last, the third-order derivatives (\ref{lem6-4}) can be proved similarly by estimating the following integrals:
 \begin{equation*}
\int_{\Omega}\left\{D_{ijk}(\ref{per1})_1 (2\mu+\lambda)\chi_0^2\tilde{\rho}^{-2}D_{ijk}(\gamma\tilde{\rho}^{\gamma-2} q-\phi)+ D_{jk}(\ref{per3-2})^{i} \chi_0^2D_{ijk}(\gamma\tilde{\rho}^{\gamma-2} q-\phi)\right\}dx=0,
\end{equation*}
and
\begin{equation*}
\int_{\Omega}\left\{D_{jkl}(\ref{per1})_1 \chi_0^2D_{jkl}(\gamma\tilde{\rho}^{\gamma-2} q-\phi)+ D_{jkl}(\ref{per3-2})^{i} \chi_0^2D_{jkl}(\tilde{\rho} u^{i})\right\}dx=0.
\end{equation*}
 Therefore, the proof is complete.
\end{proof}

Our next goal is to
 establish the estimates near the boundary $\partial\Omega$. For this purpose, we choose a
finite number of bounded open sets $\{O_l\}^{N}_{l=1}$ in $\mathbb{R}^3$ such that
\begin{equation*}
\partial\Omega\subset \bigcup_{j=1}^{N}O_{j},
\end{equation*}
Following the idea of \cite{MatsumuraNi1983}, local coordinates $(\xi,\zeta,r)$ will be seted up in each set $O_l$ as follows:\\
$(i)$ The boundary $O_{l}\cap\Omega$ is the image of smooth functions $z=z^i(\xi,\zeta)$ satisfying
\begin{equation*}
|z_{\xi}|=1,\quad z_{\xi}z_{\zeta}=0, \quad |z_{\zeta}|\geq \widetilde{\tau}>0,
\end{equation*}
where $\widetilde{\tau}$ is some positive constant independent of $j=1,2,\cdots, N.$\\
(ii)Any $x$ in $O_l$ is represented by
\begin{equation}{\label{transformation}}
x^{i}=x^{i}(\xi,\zeta,r)=rn^{i}(\xi,\zeta)+z^{i}(\xi,\zeta),
\end{equation}
where $n^i(\xi,\zeta)$ is the external unit normal vector at the point of the boundary coordinate $(\xi,\zeta)$. Here and in what follows we omit the suffix $l$ for simplicity. Bases on $z^i$, we introduce the unit vectors $e_1^i$ and $e_2^i$ as $e_1^i=z^i_\xi$, $e_2^i=z^i_\zeta/|z^i_\zeta|$. Thanks to Frenet's formula, there exists smooth functions $(m_1,m_2,m_3, m'_1,m'_2,m'_3)$ of $(\xi,\zeta)$ such that
\begin{equation*}
\frac{\partial}{\partial\xi}\left(\begin{matrix}e_1\\e_2\\n\end{matrix}\right)^i=\left(\begin{matrix}0&-m_3&-m_1\\
m_3&0&-m_2\\
m_1&m_2&0\end{matrix}\right)\left(\begin{matrix}e_1\\e_2\\n\end{matrix}\right)^i,
\end{equation*}
\begin{equation*}
\frac{\partial}{\partial\zeta}\left(\begin{matrix}e_1\\e_2\\n\end{matrix}\right)^i=\left(\begin{matrix}0&-m'_3&-m'_1\\
m'_3&0&-m'_2\\
m'_1&m'_2&0\end{matrix}\right)\left(\begin{matrix}e_1\\e_2\\n\end{matrix}\right)^i.
\end{equation*}

Hence the Jacobian $J$ of the transformation (\ref{transformation}) is given by
\begin{equation}{\label{jacobian}}
J=|x_\xi\times x_\zeta|=|z_\zeta|+(m_1|z_\zeta|+m_2')r+(m_1m_2'-m_2m_1')r^2.
\end{equation}
From (\ref{jacobian}), the transformation (\ref{transformation}) is regular choosing $r$ small if needed, which implies the functions $(\xi,\zeta,r)_{x_i}(x)$ make senses and we have
\begin{equation}{\label{jacobian2}}
\begin{split}
&\xi_{x_i}=\frac{1}{J}(x_\zeta\times x_r)_i=\frac{1}{J}(Ae_1^i+Be_2^i),\\
&\zeta_{x_i}=\frac{1}{J}(x_r\times x_\xi)_i=\frac{1}{J}(Ce_1^i+De_2^i),\\
&r_{x_i}=\frac{1}{J}(x_\xi\times x_\zeta)_i=n_i,
\end{split}
\end{equation}
where $A=|z_{\zeta}|+m_2'r$, $B=-m_1'r$, $C=-m_2r$, $D=1+m_1r$ and $J=AD-BC>0$. So (\ref{jacobian2}) gives us
\begin{equation*}
\partial_{x_{i}}=\frac{1}{J}\Big(Ae^i_{1}+Be^i_2\Big)\partial_{\xi}+\frac{1}{J}\Big(Ce^i_{1}+De^i_2\Big)\partial_{\zeta}+n^{i}\partial_{r}.
\end{equation*}
 Denote the tangential derivatives by $\bar{\partial}=(\partial_{\xi}, \partial_{\zeta})$, then the following estimate for the commutator hold:
\begin{equation}\label{comm}
\left\|[\partial_{x_i},\bar{\partial}]v\right\|^2\leq C \|\nabla v\|^2, \quad \mbox{for any function $v$.}
\end{equation}

 Let $\chi_{l}$ $(1\leq l\leq N)$ be any fixed cut-off function in $C^{\infty}_0(O_{l})$, we would like to derive the estimates for tangential derivatives  of order up to three.
 \begin{lem}\label{lem7}
 Assume that the conditions in Proposition \ref{prop} hold, then  for any positive $\epsilon$, it holds that
 \begin{equation}\label{lemma7}
\begin{split}
\frac{1}{2}&\frac{d}{dt} \Big\{\gamma\int_{\Omega}\chi_{l}^2\tilde\rho^{\gamma-2}|\bar{\partial}q|^2dx
+\int_{\Omega}\chi_{l}^2\tilde{\rho}|\bar{\partial} u|^2dx +\int_{\Omega}\chi_{l}^2|\bar{\partial}\nabla\phi|^2dx
\Big\}\\[2mm]
&+\frac{d}{dt}\int_{\Omega}\chi_{l}^2\bar{\partial}(\gamma\tilde{\rho}^{\gamma-2}) q \bar{\partial} q  dx
+C\Big\{\|\chi_l \bar{\partial} \nabla u\|^2+\left\|\chi_l\bar{\partial} \frac{dq}{dt}\right\|^2\Big\}\\[2mm]
\leq&\epsilon\Big(\|q\|^2+\|\nabla q\|^2+\|\nabla\phi\|^2\Big)+C\Big(\|D u\|^2+\| u_{t}\|^2+\|q_t\|^2\Big)+C\delta\mathcal{D}(t),
\end{split}
\end{equation}

\begin{equation}\label{lemma7step2}
\begin{split}
\frac{1}{2}&\frac{d}{dt} \Big\{\gamma\int_{\Omega}\chi_{l}^2\tilde\rho^{\gamma-2}|\bar{\partial}^2q|^2dx
+\int_{\Omega}\chi_{l}^2\tilde{\rho}|\bar{\partial}^2 u|^2dx +\int_{\Omega}\chi_{l}^2|\bar{\partial}^2\nabla\phi|^2dx
\Big\}\\[2mm]
&+\frac{d}{dt}\Big\{2\int_{\Omega}\chi_{l}^2\bar{\partial}(\gamma\tilde{\rho}^{\gamma-2}) \bar{\partial}q \bar{\partial}^2 q  dx
+\int_{\Omega}\chi_{l}^2\bar{\partial}^2(\gamma\tilde{\rho}^{\gamma-2}) q \bar{\partial}^2 q  dx\Big\}\\[2mm]
&+C\Big\{\|\chi_l \bar{\partial}^2 \nabla u\|^2+\left\|\chi_l\bar{\partial}^2 \frac{dq}{dt}\right\|^2\Big\}\\[2mm]
\leq&\epsilon \|D^2q\|^2+C\Big(\|q\|_{1}^2+\|D u\|_1^2+\| Du_{t}\|^2+\|q_t\|_1^2\Big)+C\delta\mathcal{D}(t),
\end{split}
\end{equation}
and
\begin{equation}\label{lemma7step3}
\begin{split}
\frac{1}{2}&\frac{d}{dt} \Big\{\gamma\int_{\Omega}\chi_{l}^2\tilde\rho^{\gamma-2}|\bar{\partial}^3q|^2dx
+\int_{\Omega}\chi_{l}^2\tilde{\rho}|\bar{\partial}^3 u|^2dx +\int_{\Omega}\chi_{l}^2|\bar{\partial}^3\nabla\phi|^2dx
\Big\}\\[2mm]
&+\frac{d}{dt}\Big\{3\int_{\Omega}\chi_{l}^2\bar{\partial}(\gamma\tilde{\rho}^{\gamma-2}) \bar{\partial}^2q \bar{\partial}^3 q  dx
+3\int_{\Omega}\chi_{l}^2\bar{\partial}^2(\gamma\tilde{\rho}^{\gamma-2})\bar{\partial} q \bar{\partial}^3 q  dx
+\int_{\Omega}\chi_{l}^2\bar{\partial}^3(\gamma\tilde{\rho}^{\gamma-2}) q \bar{\partial}^3 q  dx\Big\}\\[2mm]
&+C\Big\{\|\chi_l \bar{\partial}^3 \nabla u\|^2+\left\|\chi_l\bar{\partial}^3 \frac{dq}{dt}\right\|^2\Big\}\\[2mm]
\leq&\epsilon\|D^3q\|^2+C\Big(\|q\|_{2}^2+\|\nabla u\|_2^2+\| D^2u_{t}\|^2+\|q_t\|_2^2\Big)+C\delta\mathcal{D}(t).
\end{split}
\end{equation}
\end{lem}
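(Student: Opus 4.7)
The plan is to mimic the structure of the basic energy estimates in Lemmas \ref{lemma1}--\ref{lemma2}, but applied to the tangentially differentiated system near the boundary, exploiting the crucial fact that $\bar{\partial}=(\partial_\xi,\partial_\zeta)$ is tangent to $\partial\Omega$. Once localized by $\chi_l\in C^\infty_0(O_l)$, the operator $\chi_l\bar{\partial}$ preserves the Dirichlet condition $u|_{\partial\Omega}=0$ and the Neumann condition $\nabla\phi\cdot\nu|_{\partial\Omega}=0$, so the standard integration-by-parts manipulations carry over. Commutator errors between $\bar{\partial}$ and the Cartesian derivatives $\partial_{x_i}$ are controlled by (\ref{comm}).

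For (\ref{lemma7}), I would apply $\bar{\partial}$ to $(\ref{per1})_1$ and $(\ref{per1})_2$ and test them respectively against $\gamma\tilde{\rho}^{\gamma-2}\chi_l^2\bar{\partial} q$ and $\chi_l^2\bar{\partial} u$, then add. The viscous terms, after one integration by parts, yield the good dissipation $\mu\|\chi_l\bar{\partial}\nabla u\|^2$ plus commutator remainders bounded by $\|Du\|^2$ via (\ref{comm}). The pressure and convective couplings produce the standard cancellation $\int\gamma\tilde{\rho}^{\gamma-1}\bar{\partial} q\,\mbox{div}(\bar{\partial} u)\,dx + \int\nabla(\gamma\tilde{\rho}^{\gamma-1}\bar{\partial} q)\cdot\bar{\partial} u\,dx = 0$ up to lower-order pieces involving $\bar{\partial}\tilde{\rho}$ and $\bar{\partial}\chi_l$; the ``correction'' term $\frac{d}{dt}\int\chi_l^2\bar{\partial}(\gamma\tilde{\rho}^{\gamma-2})q\,\bar{\partial} q\,dx$ appearing on the left of (\ref{lemma7}) arises because $\bar{\partial}$ does not commute cleanly with the variable coefficient $\tilde{\rho}^{\gamma-2}$, and is naturally collected as a total time derivative. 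To treat the Poisson coupling, I would mimic Lemma \ref{lemma1}: use $\bar{\partial}$ of $(\ref{per1})_1$ together with $\bar{\partial}$ of $(\ref{per1})_3$ to convert $-\int\chi_l^2\tilde{\rho}\nabla\bar{\partial}\phi\cdot\bar{\partial} u\,dx$ into $\tfrac{1}{2}\frac{d}{dt}\int\chi_l^2|\bar{\partial}\nabla\phi|^2\,dx$ plus manageable remainders, with the tangential Neumann condition ensuring the boundary integrals vanish. The nonlinear contributions from $f^0$ and $f$ are cubic and hence bounded by $C\delta\mathcal{D}(t)$ under the smallness hypothesis. Finally, the dissipation of $\bar{\partial}(dq/dt)$ follows from $(\ref{per3-1})$, since $\bar{\partial}(dq/dt) = -\tilde{\rho}\,\bar{\partial}\mbox{div}\,u + \text{lower order}$, so $\|\chi_l\bar{\partial}(dq/dt)\|^2\lesssim\|\chi_l\bar{\partial}\nabla u\|^2 + \|Du\|^2 + \delta\mathcal{D}(t)$.

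The higher-order estimates (\ref{lemma7step2}) and (\ref{lemma7step3}) are obtained by iterating the same scheme with multipliers $\gamma\tilde{\rho}^{\gamma-2}\chi_l^2\bar{\partial}^k q$ and $\chi_l^2\bar{\partial}^k u$ for $k=2,3$. The only new bookkeeping is that each extra tangential derivative falling on the coefficient $\tilde{\rho}^{\gamma-2}$ via the Leibniz rule produces one additional correction term of the form $\int\chi_l^2\bar{\partial}^j(\gamma\tilde{\rho}^{\gamma-2})\,\bar{\partial}^{k-j} q\,\bar{\partial}^k q\,dx$ with $1\le j\le k$; these are precisely the two, respectively three, extra time-derivative terms displayed in the statements, and each is gathered into the left-hand side as a total derivative.

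The main obstacle, as in the interior case of Lemma \ref{lemma6}, is the careful management of commutators at higher order. For $k=2,3$ the symbol $[\partial_{x_i},\bar{\partial}^k]$ involves derivatives of the curvature functions $(m_i,m'_i)$ from the local frame, and one must repeatedly apply (\ref{comm}) together with Poincar\'e's inequality to show that all error terms reduce to lower-order norms $\|Du\|^2_{k-1}$, $\|Du_t\|^2_{k-2}$, $\|q_t\|^2_{k-1}$, $\|q\|^2_{k-1}$ that already appear on the right-hand sides of (\ref{lemma7step2})--(\ref{lemma7step3}), together with an $\varepsilon\|D^k q\|^2$ to be absorbed later by adding the normal-derivative estimate from Lemma \ref{lem8}. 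Once this is arranged, each of (\ref{lemma7}), (\ref{lemma7step2}), (\ref{lemma7step3}) follows by Cauchy's inequality with small parameter $\varepsilon$.
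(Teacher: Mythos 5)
Your proposal follows essentially the same route as the paper: apply $\bar{\partial}^k$ to the reformulated system, test against weighted tangential derivatives of $q$ and $u$ localized by $\chi_l^2$, use that tangential derivatives preserve both boundary conditions, convert the electric coupling into $\tfrac{1}{2}\tfrac{d}{dt}\int_\Omega\chi_l^2|\bar{\partial}^k\nabla\phi|^2dx$ via the continuity and Poisson equations, control commutators by (\ref{comm}), and recover $\|\chi_l\bar{\partial}^k\tfrac{dq}{dt}\|^2$ from $\|\chi_l\bar{\partial}^k\mbox{div}\,u\|^2$. The only cosmetic difference is that the paper tests $\bar{\partial}^k(L^0-g^0)$ and $\bar{\partial}^k(L-g)$ against $\chi_l^2\bar{\partial}^k(\gamma\tilde{\rho}^{\gamma-2}q)$ and $\chi_l^2\bar{\partial}^k(\tilde{\rho}u)$ — which is where the correction terms $\tfrac{d}{dt}\int_\Omega\chi_l^2\bar{\partial}^{j}(\gamma\tilde{\rho}^{\gamma-2})\bar{\partial}^{k-j}q\,\bar{\partial}^k q\,dx$ actually originate, rather than from the pressure coupling as you suggest — instead of your multipliers $\gamma\tilde{\rho}^{\gamma-2}\chi_l^2\bar{\partial}^k q$ and $\chi_l^2\bar{\partial}^k u$.
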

\begin{proof}
Estimating the integral for
  \begin{equation}\label{tag1}
 \int_{\Omega}\Big\{\bar{\partial}(L^0-g^0)\chi_l^2\bar{\partial}(\gamma \tilde{\rho}^{\gamma-2} q)
 +\bar{\partial}(L^i-g^i)\chi_l^2\bar{\partial}(\tilde{\rho}u^{i})\Big\}dx=0.
 \end{equation}
The terms involved  $u_t$ and $q_t$ become
\begin{equation*}
 \begin{split}
 &\int_{\Omega}\chi_{l}^{2}\bar{\partial}\frac{dq}{dt}\bar{\partial}(\gamma\tilde\rho^{\gamma-2}q)dx
 \geq\int_{\Omega}\chi_{l}^{2}\bar{\partial}q_t\bar{\partial}(\gamma\tilde\rho^{\gamma-2}q)dx-C\delta\mathcal{D}(t)\\[2mm]
 &=\frac{\gamma}{2}\frac{d}{dt}\int_{\Omega}\chi_{l}^2\tilde\rho^{\gamma-2}|\bar{\partial}q|^2dx
 +\gamma\frac{d}{dt}\int_{\Omega}\chi_{l}^2\bar{\partial}(\tilde{\rho}^{\gamma-2}) q \bar{\partial} q  dx
 -\int_{\Omega}\chi_{l}^2\bar{\partial}(\tilde{\rho}^{\gamma-2}) q_t \bar{\partial} q  dx-C\delta\mathcal{D}(t)\\[2mm]
 &\geq\frac{\gamma}{2}\frac{d}{dt}\int_{\Omega}\chi_{l}^2\tilde\rho^{\gamma-2}|\bar{\partial}q|^2dx
 +\gamma\frac{d}{dt}\int_{\Omega}\chi_{l}^2\bar{\partial}(\tilde{\rho}^{\gamma-2}) q \bar{\partial} q  dx
-\epsilon \|\nabla q\|^2-C \|q_t\|^2-C\delta\mathcal{D}(t),
  \end{split}
 \end{equation*}
and
\begin{equation*}
\begin{split}
\int_{\Omega}\chi_{l}^2\bar{\partial} u^{i}_t\bar{\partial}(\tilde{\rho}u^{i})dx
&=\frac{1}{2}\frac{d}{dt}\int_{\Omega}\chi_{l}^2\tilde{\rho}|\bar{\partial} u|^2dx
+\int_{\Omega}\chi_{l}^2\bar{\partial}\tilde{\rho}\bar{\partial} u^i_t u^i dx\\[2mm]
&=\frac{1}{2}\frac{d}{dt}\int_{\Omega}\chi_{l}^2\tilde{\rho}|\bar{\partial} u|^2dx
-\int_{\Omega} u^i_t \bar{\partial}(\chi_{l}^2\bar{\partial}\tilde{\rho}u^i) dx\\[2mm]
&\geq \frac{1}{2}\frac{d}{dt}\int_{\Omega}\chi_{l}^2\tilde{\rho}|\bar{\partial} u|^2dx- C\Big\{\|\nabla u\|^2+\|u_t\|^2\Big\}.
\end{split}
\end{equation*}

Next, the following terms will be estimated by applying the inequality (\ref{comm}) for commutators, integration by part, Cauchy's inequality and Poincar$\acute{e}$'s inequality. Firstly, we have
\begin{equation*}
\begin{split}
-\int_{\Omega}\chi_{l}^2\bar{\partial}\partial_{x_i}\phi\bar{\partial}(\tilde{\rho}u^{i})dx
&=-\int_{\Omega}\chi_{l}^2\partial_{x_i}\bar{\partial}\phi\bar{\partial}(\tilde{\rho}u^{i})dx
-\int_{\Omega}\chi_{l}^2[\bar{\partial},\partial_{x_i}]\phi\bar{\partial}(\tilde{\rho}u^{i})dx\\[2mm]
&\geq\int_{\Omega}\chi_{l}^2\bar{\partial}\phi\mbox{div}(\bar{\partial}(\tilde{\rho}u^{i}))dx-\epsilon\|\nabla\phi\|^2-C\|\nabla u\|^2\\[2mm]
&\geq\int_{\Omega}\chi_{l}^2\bar{\partial}\phi\bar{\partial}\mbox{div}(\tilde{\rho}u)dx
-\epsilon\|\nabla\phi\|^2-C\|\nabla u\|^2\\[2mm]
&\geq-\int_{\Omega}\chi_{l}^2\bar{\partial}\phi\bar{\partial}q_tdx-\epsilon\|\nabla\phi\|^2-C\|\nabla u\|^2-C\delta\mathcal{D}(t)\\[2mm]
&\geq-\int_{\Omega}\chi_{l}^2\bar{\partial}\phi\bar{\partial}\Delta\phi_tdx-\epsilon\|\nabla\phi\|^2-C\|\nabla u\|^2-C\delta\mathcal{D}(t)\\[2mm]
&\geq -\int_{\Omega}\chi_{l}^2\bar{\partial}\phi\partial_{x_{j}}\bar{\partial}\partial_{x_j}\phi_tdx
-\int_{\Omega}\chi_{l}^2\bar{\partial}\phi[\bar{\partial},\partial_{x_{j}}]\partial_{x_j}\phi_tdx
-\epsilon\|\nabla\phi\|^2-C\|\nabla u\|^2-C\delta\mathcal{D}(t)\\[2mm]
&\geq \int_{\Omega}\chi_{l}^2\partial_{x_{j}}\bar{\partial}\phi\bar{\partial}\partial_{x_j}\phi_tdx
-\epsilon\|\nabla\phi\|^2-C\|\nabla u\|^2-C\|q_t\|^2-C\delta\mathcal{D}(t)\\[2mm]
&\geq\frac{1}{2}\frac{d}{dt}\int_{\Omega}\chi_{l}^2|\bar{\partial}\nabla\phi|^2dx-\epsilon\|\nabla\phi\|^2-C\|\nabla u\|^2-C\|q_t\|^2-
C\delta\mathcal{D}(t),
\end{split}
\end{equation*}
where we have used the fact $\left\|[\bar{\partial},\partial_{x_{j}}]\partial_{x_j}\phi_t\right\|^2\leq C\|\nabla^{2}\phi_t\|^2\leq C\|q_t\|^2$.
Similarly, one can get
\begin{equation*}
\begin{split}
\int_{\Omega}\chi_{l}^2\bar{\partial}\mbox{div}(\tilde{\rho}u)\bar{\partial}(\gamma\tilde{\rho}^{\gamma-2}q)dx
+\int_{\Omega}\chi_{l}^2\bar{\partial}\nabla(\gamma \tilde{\rho}^{\gamma-2} q)
\cdot\bar{\partial}(\tilde{\rho}u)dx
\geq -\epsilon(\|q\|^2+\|\nabla q\|^2)-C\|\nabla u\|^2.
\end{split}
\end{equation*}
While the principal terms could be dealt with as follows:
\begin{equation*}\label{L1}
\begin{split}
-&\int_{\Omega}\chi_{l}^2\bar{\partial} \Delta (\tilde{\rho}^{-1}u^{i})\bar{\partial}(\tilde{\rho}u^{i})dx\\[2mm]
=&-\int_{\Omega}\chi_{l}^2\partial_{x_j}\bar{\partial}\partial_{x_j}(\tilde{\rho}^{-1}u^{i})\bar{\partial}(\tilde{\rho}u^{i})dx
-\int_{\Omega}\chi_{l}^2[\bar{\partial},\partial_{x_j}]\partial_{x_j}(\tilde{\rho}^{-1}u^{i})\bar{\partial}(\tilde{\rho}u^{i})dx\\[2mm]
\geq&\int_{\Omega}\chi_{l}^2\bar{\partial}\partial_{x_j}(\tilde{\rho}^{-1}u^{i})\partial_{x_j}\bar{\partial}(\tilde{\rho}u^{i})dx
+\int_{\Omega}\chi_{l}^2\partial_{x_j}(\tilde{\rho}^{-1}u^{i})[\bar{\partial},\partial_{x_j}]\bar{\partial}(\tilde{\rho}u^{i})dx
-\varepsilon\|\chi_{l}\bar{\partial}\nabla u \|^2-C\|\nabla u\|^2\\[2mm]
\geq&\int_{\Omega}\chi_{l}^2\bar{\partial}\partial_{x_j}(\tilde{\rho}^{-1}u^{i})\bar{\partial}\partial_{x_j}(\tilde{\rho}u^{i})dx
-\int_{\Omega}\chi_{l}^2\bar{\partial}\partial_{x_j}(\tilde{\rho}^{-1}u^{i})[\bar{\partial},\partial_{x_j}](\tilde{\rho}u^{i})dx\\[2mm]
&+\int_{\Omega}\chi_{l}^2\partial_{x_j}(\tilde{\rho}^{-1}u^{i})[\bar{\partial},\partial_{x_j}]\bar{\partial}(\tilde{\rho}u^{i})dx
-\varepsilon\|\chi_{l}\bar{\partial}\nabla u \|^2-C\|\nabla u\|^2\\[2mm]
\geq& \frac{1}{2}\int_{\Omega}\chi_{l}^2|\bar{\partial}\nabla u|^2 dx-C\|\nabla u\|^2,
\end{split}
\end{equation*}
and
\begin{equation*}
\begin{split}
-\int_{\Omega}\chi_{l}^2\bar{\partial}\left(\tilde{\rho}^{-1} \partial_{x_i}\mbox{div}u\right)\bar{\partial}(\tilde{\rho}u^{i})dx
&\geq \frac{1}{2}\int_{\Omega}\chi_{l}^2|\bar{\partial}\mbox{div} u|^2 dx-C\|\nabla u\|^2\\[2mm]
&\geq C\int_{\Omega}\chi_{l}^2|\bar{\partial}\frac{dq}{dt}|^2 dx-C\|\nabla u\|^2.
\end{split}
\end{equation*}
Finally, the rest of nonlinar terms for (\ref{tag1}) could be dominated by $C\delta\mathcal{D}(t)$. Therefore, combining the above inequalities with (\ref{tag1}), it induces the desired result (\ref{lemma7}).

The proofs for  (\ref{lemma7step2}) and (\ref{lemma7step3}) are achieved by estimating the following integrals
  \begin{equation*}
 \int_{\Omega}\Big\{\bar{\partial}^{2}(L^0-g^0)\chi_l^2\bar{\partial}^{2}(\gamma \tilde{\rho}^{\gamma-2} q)
 +\bar{\partial}^2(L-g)\cdot\chi_l^2\bar{\partial}^2(\tilde{\rho}u)\Big\}dx=0,
 \end{equation*}
 \begin{equation*}
 \int_{\Omega}\Big\{\bar{\partial}^{3}(L^0-g^0)\chi_l^2\bar{\partial}^{3}(\gamma \tilde{\rho}^{\gamma-2} q)
 +\bar{\partial}^2(L-g)\cdot\chi_l^2\bar{\partial}^{3}(\tilde{\rho}u)\Big\}dx=0,
 \end{equation*}
respectively, and utilizing the similar argument as (\ref{lemma7}). Thus, this lemma has been completed.
\end{proof}
\vspace{2mm}

Next,   we estimate the normal derivatives and the mixed  (tangential-normal) derivatives of the solutions. for this purpose, in each $O_j$, rewriting the equations (\ref{per3-1}), (\ref{per3-2})
  by local coordinates $(\xi,\zeta,r)$ as
\begin{equation*}
\begin{split}
\bar{L}^{0}\equiv&\frac{d q}{dt}+\Big\{\frac{1}{J}(Ae^i_{1}+Be^i_2)(\tilde{\rho}u^i)_{\xi}+\frac{1}{J}(Ce^i_{1}+De^i_2)(\tilde{\rho}u^i)_{\zeta}
+n^{i}(\tilde{\rho}u^i)_{r}\Big\}=g^{0},\\[3mm]
\bar{L}^{i}\equiv&u_{t}^{i}-\mu\frac{1}{\tilde{\rho}}\Big\{\frac{1}{J^2}(A^2+B^2)u^{i}_{\xi\xi}+\frac{2}{J^2}(AC+BD)u^{i}_{\xi\zeta}
+\frac{1}{J^2}(C^2+D^2)u^{i}_{\zeta\zeta}+u^{i}_{rr}\Big\}\\[2mm]
&+\mbox{first order and zero order terms of $u$}\\[2mm]
&+(\mu+\lambda)\frac{1}{\tilde{\rho}}\Big\{\frac{1}{J}(Ae^{i}_1+Be^{i}_2)\Big(\frac{1}{\tilde{\rho}}\frac{dq}{dt}\Big)_{\xi}
+\frac{1}{J}(Ce^{i}_1+De^{i}_2)\Big(\frac{1}{\tilde{\rho}}\frac{dq}{dt}\Big)_{\zeta}
+n^{i}\Big(\frac{1}{\tilde{\rho}}\frac{dq}{dt}\Big)_{r}\Big\}\\[2mm]
&+\left\{\frac{1}{J}(Ae^{i}_1+Be^{i}_2)\Big(\gamma \tilde{\rho}^{\gamma-2} q-\phi\Big)_{\xi}
+\frac{1}{J}(Ce^{i}_1+De^{i}_2)\Big(\gamma \tilde{\rho}^{\gamma-2} q-\phi\Big)_{\zeta}
+n^{i}\Big(\gamma\tilde{\rho}^{\gamma-2} q-\phi\Big)_{r}\right\}\\[2mm]
=&(\mu+\lambda)\frac{1}{\tilde{\rho}}\left(\frac{1}{\tilde{\rho}}g^0\right)_{x_i}+g^{i}, \quad i=1,2,3,
\end{split}
\end{equation*}
where we have used $\mbox{div}u=\tilde{\rho}^{-1}\Big(g^{0}-u\cdot\nabla\tilde{\rho}-\frac{dq}{dt}\Big)$.

One can rewrite $\partial_{r}(\bar{L}^0-g^0)=0$ and $n^{i}(\bar{L}^i-g^i)=0$ as:
\begin{equation}\label{norm1}
\begin{split}
&\left(\frac{d q}{dt}\right)_{r}+\Big\{\frac{1}{J}(Ae^i_{1}+Be^i_2)\tilde{\rho}u^i_{r\xi}+\frac{1}{J}(Ce^i_{1}+De^i_2)\tilde{\rho}u^i_{r\zeta}
+n^{i}\tilde{\rho}u^i_{rr}\Big\}\\[2mm]
&+\mbox{first order and zero order terms  of $u$}=g_{r}^0,
\end{split}
\end{equation}
and
\begin{equation}\label{norm2}
\begin{split}
&n^{i}u_{t}^{i}-\mu\frac{1}{\tilde{\rho}}\Big\{\frac{1}{J^2}(A^2+B^2)n^{i}u^{i}_{\xi\xi}+\frac{2}{J^2}(AC+BD)n^{i}u^{i}_{\xi\zeta}
+\frac{1}{J^2}(C^2+D^2)n^{i}u^{i}_{\zeta\zeta}+n^{i}u^{i}_{rr}\Big\}\\[2mm]
&+\mbox{first order and zero order  terms of $u$}\\[2mm]
&+(\mu+\lambda)\frac{1}{\tilde{\rho}}\Big(\frac{1}{\tilde{\rho}}\frac{dq}{dt}\Big)_{r}+(\gamma \tilde{\rho}^{\gamma-2}q-\phi)_{r}=(\mu+\lambda)\frac{1}{\tilde{\rho}}\left(\frac{1}{\tilde{\rho}}g^0\right)_{r}+n^{i}g^{i},
\end{split}
\end{equation}
where we have used the following equality
\begin{equation*}
\left(\frac{1}{\tilde{\rho}}g^0\right)_{x_i}=\frac{1}{J}(Ae^i_{1}+Be^i_2)\left(\frac{1}{\tilde{\rho}}g^0\right)_{\xi}
+\frac{1}{J}(Ce^i_{1}+De^i_2)\left(\frac{1}{\tilde{\rho}}g^0\right)_{\zeta}
+n^{i}\left(\frac{1}{\tilde{\rho}}g^0\right)_{r}.
\end{equation*}
Eliminating the  term $n^{i}u^{i}_{rr}$ from (\ref{norm1}) and (\ref{norm2}), one has
\begin{equation}\label{norm3}
\begin{split}
(2\mu&+\lambda)\frac{1}{\tilde\rho^2}\left(\frac{d q}{dt}\right)_{r}
+(\gamma \tilde{\rho}^{\gamma-2}q-\phi)_{r}\\
=&-n^{i}u_{t}^{i}+\frac{\mu}{\tilde{\rho}}\Big\{\frac{1}{J^2}(A^2+B^2)n^{i}u^{i}_{\xi\xi}+\frac{2}{J^2}(AC+BD)n^{i}u^{i}_{\xi\zeta}
+\frac{1}{J^2}(C^2+D^2)n^{i}u^{i}_{\zeta\zeta}\\[2mm]
&-\frac{1}{J}(Ae^i_{1}+Be^i_2)u^{i}_{r\xi}
-\frac{1}{J}(Ce^i_{1}+De^i_2)u^{i}_{r\zeta}\Big\}\\[2mm]
&+\mbox{first order and zero terms of $u$}
+\mu \frac{1}{\tilde\rho^2} g_r^0+(\mu+\lambda)\frac{1}{\tilde{\rho}}\left(\frac{1}{\tilde{\rho}}g^0\right)_{r}+n^{i}g^{i}.
\end{split}
\end{equation}

\begin{lem}\label{lem8}
Under the assumptions in Proposition \ref{prop}, then  for any positive $\epsilon$, it holds that

(i)\ the estimate of normal derivative:
 \begin{equation}\label{lemma8}
\begin{split}
\frac{\gamma}{2}&\frac{d}{dt}\int_{\Omega}\chi_{l}^2\tilde\rho^{\gamma-4}|q_r|^2dx
-\frac{d}{dt}\Big\{
\int_{\Omega}\chi_{l}^2\tilde\rho^{-2}q_{r}\phi_{r}dx
-\int_{\Omega}\chi_{l}^2\tilde\rho^{-2}(\gamma\tilde\rho^{\gamma-2})_{r}q_{r}q dx
\Big\}\\[2mm]
&+C\left\{\|\chi_l (\gamma \tilde\rho^{\gamma-2}q-\phi)_{r}\|^2
+\left\|\chi_l\left(\frac{dq}{dt}\right)_{r}\right\|^2\right\}\\[2mm]
\leq&\epsilon\|Dq\|^2+C\Big(\|q_t\|^2+\|D u\|^2+\|u_t\|^2+\|\chi_l\bar{\partial}D u\|^2\Big)+C\delta\mathcal{D}(t).
\end{split}
\end{equation}
(ii)\ For $k+m=1$, it holds that
\begin{equation}\label{lemma8step2}
\begin{split}
\frac{d}{dt}&\frac{\gamma}{2}\int_{\Omega}\chi_{l}^2\tilde\rho^{\gamma-4}|\bar{\partial}^{k}\partial_{r}^{m}q_r|^2dx
-\frac{d}{dt}\Big\{\int_{\Omega}\chi_{l}^2\tilde\rho^{-2}\bar{\partial}^{k}\partial_{r}^{m}q_{r}\bar{\partial}^{k}\partial_{r}^{m}\phi_{r}dx\\[2mm]
&-\int_{\Omega}\chi_{l}^2 \tilde\rho^{-2}\bar{\partial}^{k}\partial_{r}^{m}q_{r}
\Big(\bar{\partial}^{k}\partial_{r}^{m}(\gamma\tilde\rho^{\gamma-2})q_{r}+ (\gamma\tilde\rho^{\gamma-2})_r \bar{\partial}^{k}\partial_{r}^{m}q
+\bar{\partial}^{k}\partial_{r}^{m}(\gamma\tilde\rho^{\gamma-2})_rqdx\Big) \Big\}\\[2mm]
&+C\left\{\|\chi_l\bar{\partial}^{k}\partial_{r}^{m} (\gamma \tilde\rho^{\gamma-2}q-\phi)_{r}\|^2
+\left\|\chi_l\bar{\partial}^{k}\partial_{r}^{m}\left(\frac{dq}{dt}\right)_{r}\right\|^2\right\}\\[2mm]
\leq&C\left(\left\|\left(\frac{dq}{dt}\right)_{r}\right\|^2+\|q\|_{1}^2+\|q_t\|_1^2
+\|Du_t\|^2+\|Du\|_{1}^2
+\|\chi_{l}\bar{\partial}^{k+1}\partial_{r}^{m}Du\|^2\right)\\[2mm]
&+\epsilon\|D^2q\|^2+C\delta\mathcal{D}(t).
\end{split}
\end{equation}
(iii)\ For $k+m=2$, it holds that
\begin{equation}\label{lemma8step3}
\begin{split}
\frac{\gamma}{2}&\frac{d}{dt}\int_{\Omega}\chi_{l}^2\tilde\rho^{\gamma-4}|\bar{\partial}^{k}\partial_{r}^{m}q_r|^2dx
-\frac{d}{dt}\Big\{\int_{\Omega}\chi_{l}^2\tilde\rho^{-2}\bar{\partial}^{k}\partial_{r}^{m}q_{r}\bar{\partial}^{k}\partial_{r}^{m}\phi_{r}dx
-\int_{\Omega}G_{k,m}dx \Big\}\\[2mm]
&+C\left\{\|\chi_l\bar{\partial}^{k}\partial_{r}^{m} (\gamma \tilde\rho^{\gamma-2}q-\phi)_{r}\|^2
+\left\|\chi_l\bar{\partial}^{k}\partial_{r}^{m}\left(\frac{dq}{dt}\right)_{r}\right\|^2\right\}\\[2mm]
\leq&C\left(\left\|\left(\frac{dq}{dt}\right)_{r}\right\|_1^2+\|q\|_2^2+\|q_t\|_2^2+\|D^2u_t\|^2+\|Du\|_{2}^2
+\|\chi_{l}\bar{\partial}^{k}\partial_{r}^{m}\bar{\partial}Du\|^2\right)\\[2mm]
&+\epsilon\|D^3q\|^2+C\delta\mathcal{D}(t),
\end{split}
\end{equation}
where
\begin{equation*}
\begin{split}
&G_{2,0}=\chi_{l}^2 \tilde\rho^{-2}\bar{\partial}^{2}q_{r}\Big\{
(\gamma\tilde\rho^{\gamma-2})_r\bar{\partial}^{2}q
+2\bar{\partial}(\gamma\tilde{\rho}^{\gamma-2})\bar{\partial}q_{r}
+\bar{\partial}^{2}(\gamma\tilde\rho^{\gamma-2})q_{r}+\bar{\partial}(\gamma\rho^{\gamma-2})_{r}\bar{\partial}q
+\bar{\partial}^{2}(\gamma\tilde\rho^{\gamma-2})_rqdx\Big\},\\[2mm]
&G_{1,1}=\chi_{l}^2 \tilde\rho^{-2}\bar{\partial}q_{rr}\Big\{
\bar{\partial}(\gamma\tilde{\rho}^{\gamma-2})q_{rr}
2(\gamma\tilde\rho^{\gamma-2})_r\bar{\partial}q_r
+\bar{\partial}(\gamma\tilde\rho^{\gamma-2})_{r}q_{r}+(\gamma\rho^{\gamma-2})_{rr}\bar{\partial}q
+\bar{\partial}(\gamma\tilde\rho^{\gamma-2})_{rr}qdx\Big\},\\[2mm]
&G_{0,2}=\chi_{l}^2 \tilde\rho^{-2}q_{rrr}
\Big\{
3(\gamma\tilde\rho^{\gamma-2})_{r}q_{rr}
+3(\gamma\tilde{\rho}^{\gamma-2})_{rr}q_{r}
+(\gamma\tilde\rho^{\gamma-2})_{rrr}q\Big\}.
\end{split}
\end{equation*}

For simply, we denote  the estimate of normal-normal derivative (i.e. (\ref{lemma8step2}) with $k=0$, $m=1$) by
\begin{equation}\label{h3}
\begin{split}
\frac{d}{dt}&(H_3+F_3)+C\left\{\|\chi_l(\gamma \tilde\rho^{\gamma-2}q-\phi)_{rr}\|^2
+\left\|\chi_l\left(\frac{dq}{dt}\right)_{rr}\right\|^2\right\}\\[2mm]
\leq&C\left(\left\|\left(\frac{dq}{dt}\right)_{r}\right\|^2+\|q\|_{1}^2+\|q_t\|_1^2
+\|Du_t\|^2+\|Du\|_{1}^2
+\|\chi_{l}\bar{\partial}\partial_{r}Du\|^2\right)\\[2mm]
&+\epsilon\|D^2q\|^2+C\delta\mathcal{D}(t).
\end{split}
\end{equation}
Denote the estimate of  tangential-normal-normal derivative (i.e. (\ref{lemma8step3}) with $k=1$, $m=1$) by
\begin{equation}
\begin{split}
\frac{d}{dt}&(H_5+F_5)+C\Big\{\|\chi_l\bar{\partial}(\gamma \tilde\rho^{\gamma-2}q-\phi)_{rr}\|^2
+\left\|\chi_l\bar{\partial}\left(\frac{dq}{dt}\right)_{rr}\right\|^2\Big\}\\[2mm]
\leq&C\left(\left\|\left(\frac{dq}{dt}\right)_{r}\right\|_1^2+\|q\|_2^2+\|q_t\|_2^2+\|D^2u_t\|^2+\|Du\|_{2}^2
+\|\chi_{l}\bar{\partial}^2\partial_{r}Du\|^2\right)\\[2mm]
&+\epsilon\|D^3q\|^2+C\delta\mathcal{D}(t),
\end{split}
\end{equation}
and the estimate of  normal-normal-normal derivative (i.e. (\ref{lemma8step3}) with $k=0$, $m=2$) by
\begin{equation}\label{step6}
\begin{split}
\frac{d}{dt}&(H_6+F_6)+C\Big\{\|\chi_l\partial_{r}^{3} (\gamma \tilde\rho^{\gamma-2}q-\phi)\|^2
+\left\|\chi_l\partial_{r}^{3}\left(\frac{dq}{dt}\right)\right\|^2\Big\}\\[2mm]
\leq& C\left(\left\|\left(\frac{dq}{dt}\right)_{r}\right\|_1^2+\|q\|_2^2+\|q_t\|_2^2+\|D^2u_t\|^2+\|Du\|_{2}^2
+\|\chi_{l}\partial_{r}^{2}\bar{\partial}Du\|^2\right)\\[2mm]
&+\epsilon\|D^3q\|^2+C\delta\mathcal{D}(t).
\end{split}
\end{equation}
\end{lem}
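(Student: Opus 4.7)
The crucial device is the algebraic identity (\ref{norm3}), obtained by combining $\partial_r(\bar L^0-g^0)=0$ with $n^i(\bar L^i-g^i)=0$ so as to eliminate the second-order normal derivative $n^iu^i_{rr}$. On its left side (\ref{norm3}) contains precisely the two normal quantities $\left(\frac{dq}{dt}\right)_r$ and $(\gamma\tilde\rho^{\gamma-2}q-\phi)_r$ whose $L^2$-norms we wish to control, while its right side involves only $u_t$, tangential-tangential and mixed tangential-normal derivatives of $u$ (namely $u^i_{\xi\xi},u^i_{\xi\zeta},u^i_{\zeta\zeta},u^i_{r\xi},u^i_{r\zeta}$), lower-order terms in $u$, and the nonlinearities $g^0,g^0_r,g$. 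Each right-hand piece is either controlled by $\|u_t\|^2$, $\|Du\|^2$, the mixed-derivative quantity $\|\chi_l\bar\partial Du\|^2$ provided by Lemma \ref{lem7}, or absorbed into $C\delta\mathcal{D}(t)$.

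To prove (\ref{lemma8}), I would test (\ref{norm3}) against $\chi_l^2\tilde\rho^{-2}\left(\frac{dq}{dt}\right)_r$ and integrate over $\Omega$. The first term on the left yields the coercive piece $(2\mu+\lambda)\int\chi_l^2\tilde\rho^{-4}|(dq/dt)_r|^2\,dx$. Writing $\left(\frac{dq}{dt}\right)_r=q_{tr}+(u\cdot\nabla q)_r$ and expanding $(\gamma\tilde\rho^{\gamma-2}q-\phi)_r=\gamma\tilde\rho^{\gamma-2}q_r+(\gamma\tilde\rho^{\gamma-2})_r q-\phi_r$, the principal contribution of the cross term $\gamma\tilde\rho^{\gamma-2}q_rq_{tr}$ equals $\frac{\gamma}{2}\frac{d}{dt}\int\chi_l^2\tilde\rho^{\gamma-4}|q_r|^2\,dx$, while the remaining pieces $qq_{tr}$ and $\phi_rq_{tr}$ are integrated in $t$ to produce exactly the correction time-derivatives $\frac{d}{dt}\int\chi_l^2\tilde\rho^{-2}q_r\phi_r\,dx$ and $\frac{d}{dt}\int\chi_l^2\tilde\rho^{-2}(\gamma\tilde\rho^{\gamma-2})_rq_rq\,dx$ that appear in the statement. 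The remainder terms involving $\phi_{tr}$, $q_t q_r$, and the nonlinear $(u\cdot\nabla q)_r$ are handled respectively by the elliptic estimate $\|D^2\phi_t\|\le C\|q_t\|$ (from $\Delta\phi_t=q_t$), by Cauchy's inequality, and by the smallness of $\delta$. Finally, the second dissipative quantity $\|\chi_l(\gamma\tilde\rho^{\gamma-2}q-\phi)_r\|^2$ is recovered by solving (\ref{norm3}) algebraically for $(\gamma\tilde\rho^{\gamma-2}q-\phi)_r$ and squaring, again dominated by $\|\chi_l(dq/dt)_r\|^2$ plus the right-hand ingredients listed above.

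The higher-order estimates (\ref{lemma8step2}) and (\ref{lemma8step3}) follow the same template, now applied to $\bar\partial^k\partial_r^m$ of (\ref{norm3}) for $k+m=1$ and $k+m=2$, tested against $\chi_l^2\tilde\rho^{-2}\bar\partial^k\partial_r^m\left(\frac{dq}{dt}\right)_r$. One obtains the coercive piece $\|\chi_l\bar\partial^k\partial_r^m(dq/dt)_r\|^2$ and the principal time derivative $\frac{\gamma}{2}\frac{d}{dt}\int\chi_l^2\tilde\rho^{\gamma-4}|\bar\partial^k\partial_r^m q_r|^2\,dx$; the Leibniz expansion of $\bar\partial^k\partial_r^m[(\gamma\tilde\rho^{\gamma-2})q-\phi]$ followed by integration in $t$ generates precisely the correction integrals $F_3,F_5,F_6$ and the $G_{k,m}$ displayed in the statement. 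Mixed derivatives of $u$ of the form $\|\chi_l\bar\partial^{k+1}\partial_r^m Du\|^2$ which appear on the right are supplied by Lemma \ref{lem7} for pure tangential ones, and by the lower-order instances of the present lemma for those already involving $\partial_r$, so the estimates close in a hierarchy: interior to boundary, and within the boundary patch, tangential first, then mixed, then purely normal.

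The main obstacle is bookkeeping rather than conceptual. The local representation drags along the factors $1/J,A,B,C,D$ and the Frenet coefficients of (\ref{jacobian2}), and the commutators $[\bar\partial,\partial_r]$ and $[\partial_{x_i},\bar\partial]$ generate a large number of lower-order terms. Inequality (\ref{comm}) and standard Sobolev interpolation ensure that every commutator contribution is either lower order---absorbed into $\|Du\|_{k+m}^2$, $\|q_t\|_{k+m}^2$ or $\|(dq/dt)_r\|_{k+m-1}^2$---or quadratically nonlinear and therefore absorbed into $C\delta\mathcal{D}(t)$ by the running assumption $\sup_t\mathcal{E}(t)\le\delta^2$. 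The remaining genuinely dangerous cross terms (those involving $\|D^{k+m+1}q\|$) are absorbed on the left by Cauchy's inequality at scale $\epsilon$, producing the $\epsilon\|D^{k+m+1}q\|^2$ term on the right of each displayed estimate.
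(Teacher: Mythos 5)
Your proposal is correct and follows essentially the same route as the paper: both rest on the identity (\ref{norm3}) (which eliminates $n^iu^i_{rr}$), test it against the normal quantities $\left(\frac{dq}{dt}\right)_r$ and $(\gamma\tilde\rho^{\gamma-2}q-\phi)_r$ so that the cross term $\int\chi_l^2\tilde\rho^{-2}\left(\frac{dq}{dt}\right)_r(\gamma\tilde\rho^{\gamma-2}q-\phi)_r\,dx$ generates the displayed time derivatives and correction integrals, and repeat with $\bar\partial^k\partial_r^m$ applied to (\ref{norm3}) for the higher orders. The only cosmetic difference is that you obtain the second dissipative norm by solving (\ref{norm3}) algebraically and squaring, whereas the paper performs a second inner product; these are equivalent.
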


 \begin{proof}
 Taking the inner product of (\ref{norm3}) with $\chi_{l}^2(\gamma\tilde{\rho}^{\gamma-2} q-\phi)_{r}$, then the left-hand side is
 \begin{equation}\label{norm4}
\begin{split}
&(2\mu+\lambda)\int_{\Omega}\chi_{l}^2\frac{1}{\tilde\rho^2}\left(\frac{d q}{dt}\right)_{r} (\gamma\tilde{\rho}^{\gamma-2} q-\phi)_{r}dx
+\|\chi_{l}(\gamma \tilde{\rho}^{\gamma-2}q-\phi)_{r}\|^2\\
&\equiv LHS_1+LHS_2+\|\chi_{l}(\gamma \tilde{\rho}^{\gamma-2}q-\phi)_{r}\|^2.
\end{split}
\end{equation}
A simple calculation gives
\begin{equation*}
\begin{split}
LHS_1\equiv&\int_{\Omega}\chi_{l}^2\frac{1}{\tilde\rho^2}\left(\frac{d q}{dt}\right)_{r} (\gamma\tilde{\rho}^{\gamma-2} q)_{r}dx\\[2mm]
=&\gamma\int_{\Omega}\chi_{l}^2\tilde\rho^{\gamma-4}q_{tr}q_r dx
 +\int_{\Omega}\chi_{l}^2\tilde\rho^{-2}(\gamma\tilde\rho^{\gamma-2})_{r}q_{tr}q dx
+\int_{\Omega}\chi_{l}^2\tilde\rho^{-2}(u\cdot\nabla q)_{r} (\gamma\tilde{\rho}^{\gamma-2} q)_{r}dx\\[2mm]
 \geq&\frac{\gamma}{2}\frac{d}{dt}\int_{\Omega}\chi_{l}^2\tilde\rho^{\gamma-4}|q_r|^2dx
+\frac{d}{dt}\int_{\Omega}\chi_{l}^2\tilde\rho^{-2}(\gamma\tilde\rho^{\gamma-2})_{r}q_{r}q dx-\epsilon\|\nabla q\|^2
-C\|q_t\|^2-C\delta\mathcal{D}(t),
\end{split}
\end{equation*}
and
\begin{equation*}
\begin{split}
LHS_2\equiv&-\int_{\Omega}\chi_{l}^2\frac{1}{\tilde\rho^2}\left(\frac{d q}{dt}\right)_{r} \phi_{r}dx\\[2mm]
=&-\frac{d}{dt}\int_{\Omega}\chi_{l}^2{\tilde\rho^{-2}}q_{r}\phi_{r}dx+\int_{\Omega}\chi_{l}^2{\tilde\rho^{-2}}q_{r}\phi_{tr}dx
-\int_{\Omega}\chi_{l}^2\tilde\rho^{-2}(u\cdot\nabla q)_{r}\phi_{r}dx\\[2mm]
\geq&-\frac{d}{dt}\int_{\Omega}\chi_{l}^2\tilde\rho^{-2}q_{r}\phi_{r}dx-\epsilon\|\nabla q\|^2-C\|\nabla u\|^2-C\delta\mathcal{D}(t),
\end{split}
\end{equation*}
where we have used (\ref{imp1}) in the last inequality.

Putting the above two inequalities with (\ref{norm4}), and using Cauchy's inequality, one obtains that
\begin{equation}\label{norm5}
\begin{split}
\frac{d}{dt}&\Big\{\frac{\gamma}{2}\int_{\Omega}\chi_{l}^2\tilde\rho^{\gamma-4}|q_r|^2dx
+\gamma(\gamma-2)\int_{\Omega}\chi_{l}^2\tilde\rho^{\gamma-5}\tilde{\rho}_{r}q_{r}qdx
-\int_{\Omega}\chi_{l}^2\tilde\rho^{-2}q_{r}\phi_{r}dx\Big\}\\[2mm]
&+\frac{1}{2}\|\chi_l (\gamma \tilde\rho^{\gamma-2}q-\phi)_{r}\|^2\\[2mm]
\leq&\epsilon\|\nabla q\|^2+C\Big(\|q_t\|^2+\|\nabla u\|^2+\|u_t\|^2+\|\chi_l\bar{\partial} D u\|^2\Big)+C\delta\mathcal{D}(t).
\end{split}
\end{equation}
Meanwhile, taking the inner product of (\ref{norm3}) with $\chi_{l}^2\left(\frac{dq}{dt}\right)_{r}$, a similar argument as (\ref{norm5}) gives the desired estimate (\ref{lemma8}). Furthermore, the estimates (\ref{lemma8step2}) and  (\ref{lemma8step3}) can be obtained
in a similar way as before, we thus omit the proof.
\end{proof}

At last,  taking  $\chi_{l}\bar{\partial}_{k}( k=1,2)$ to equations (\ref{per3-1}), (\ref{per3-2}) in a similar manner to Lemma \ref{lemmstoke},    one obtains
\begin{lem}\label{lem10}
For  $k=1,2$ and   $k+m=1,2$, it holds
\begin{equation*}
\begin{split}
&\|\chi_lD^{m+2}\bar{\partial}^{k}u\|^2+\|\chi_lD^{m+1}\bar{\partial}^{k}(\gamma \tilde{\rho}^{\gamma-2}q-\phi)\|^2\\[1mm]
&\leq C\left(\left\|\chi_l\bar{\partial}^{k}\frac{dq}{dt}\right\|^2_{{m+1}}+
\|g^0\|^2_{{k+m+1}}+\|u_t\|^2_{k+m}+\|g\|^2_{{k+m}}+\|D q\|^2_{{k+m-1}}+\|Du\|^2_{{k+m}}\right).
\end{split}
\end{equation*}
\end{lem}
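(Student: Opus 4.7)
The plan is to reduce the claim to the Stokes estimate of Lemma \ref{lemmstoke} applied to the tangentially differentiated unknown $\chi_l \bar{\partial}^k U$, where $U = \tilde{\rho}^{-1} u$. The key structural observation is that since $u|_{\partial\Omega} = 0$ and $\bar{\partial} = (\partial_{\xi},\partial_{\zeta})$ is tangential to $\partial\Omega$, the homogeneous Dirichlet condition $\bar{\partial}^k U|_{\partial\Omega} = 0$ is preserved inside $O_l$, which is exactly what the Stokes estimate requires.

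First I would start from the Stokes reformulation already derived in the proof of Lemma \ref{lemmstoke}, namely
\begin{equation*}
\mbox{div}\, U = \tilde{\rho}^{-2}\Big(-\frac{dq}{dt} - 2u\cdot\nabla\tilde{\rho} + g^0\Big), \qquad -\mu\Delta U + \nabla(\gamma\tilde{\rho}^{\gamma-2}q - \phi) = F, \qquad U|_{\partial\Omega} = 0,
\end{equation*}
where $F = -u_t + (\mu+\lambda)\tilde{\rho}^{-1}\nabla\big(\tilde{\rho}^{-1}(g^0 - \frac{dq}{dt} - \nabla\tilde{\rho}\cdot u)\big) + g + \mu(2\nabla\tilde{\rho}^{-1}\cdot\nabla u + \Delta\tilde{\rho}^{-1}u)$. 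Then I would apply $\bar{\partial}^k$ to both equations and localize by the cutoff $\chi_l$. Since $\chi_l\bar{\partial}^k U$ still vanishes on $\partial\Omega$, the standard interior/boundary regularity for the Stokes system yields
\begin{equation*}
\|\chi_l D^{m+2}\bar{\partial}^k U\|^2 + \|\chi_l D^{m+1}\bar{\partial}^k(\gamma\tilde{\rho}^{\gamma-2}q-\phi)\|^2 \leq C\big(\|\chi_l\bar{\partial}^k \mbox{div}\, U\|_{m+1}^2 + \|\chi_l\bar{\partial}^k F\|_{m}^2 + \mathcal{R}\big),
\end{equation*}
where $\mathcal{R}$ collects commutator and cutoff remainders.

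Next I would expand the right-hand side using the explicit expressions for $\mbox{div}\, U$ and $F$. This reproduces directly the four principal contributions $\|\chi_l\bar{\partial}^k \frac{dq}{dt}\|_{m+1}^2$, $\|g^0\|_{k+m+1}^2$, $\|u_t\|_{k+m}^2$, and $\|g\|_{k+m}^2$ that appear in the statement. Every commutator of the form $[\bar{\partial}^k,\partial_{x_i}]$ and every term in which derivatives fall on $\chi_l$ or on the smooth positive coefficients $\tilde{\rho},\tilde{\rho}^{-1},\tilde{\rho}^{\gamma-2}$ produces strictly lower-order derivatives of $u$ or $q$; by the commutator estimate (\ref{comm}) and the smoothness of $\tilde{\rho}$ and $\chi_l$, these are bounded by $C(\|Du\|_{k+m}^2 + \|Dq\|_{k+m-1}^2)$. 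Finally, converting from $U=\tilde{\rho}^{-1}u$ back to $u$ costs only multiplicative constants since $\tilde{\rho} \in C^{\infty}(\bar{\Omega})$ is bounded above and below by positive constants.

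The main obstacle, and the step that deserves the most care, is the careful bookkeeping of the higher-order pressure-like piece $(\mu+\lambda)\tilde{\rho}^{-1}\nabla\big(\tilde{\rho}^{-1}\frac{dq}{dt}\big)$ inside $F$: after applying $\bar{\partial}^k$ and taking $m$ further spatial derivatives, this contributes exactly the $\|\chi_l\bar{\partial}^k \frac{dq}{dt}\|_{m+1}^2$ term on the right, and one must verify that this contribution is genuinely controlled on the right-hand side rather than recoupling to the pressure $\gamma\tilde{\rho}^{\gamma-2}q-\phi$ on the left. Because the Stokes system is elliptic and $\bar{\partial}^k U$ inherits the homogeneous boundary data, no additional boundary trace contributions arise, and the estimate closes with the stated right-hand side.
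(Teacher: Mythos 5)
Your argument is correct and is essentially the paper's own route: the paper gives no detailed proof, stating only that Lemma \ref{lem10} follows by applying $\chi_l\bar{\partial}^{k}$ to (\ref{per3-1})--(\ref{per3-2}) ``in a similar manner to Lemma \ref{lemmstoke}'', and your proposal fills this in exactly as intended --- tangential differentiation preserves the homogeneous Dirichlet condition for $U=\tilde{\rho}^{-1}u$, the localized Stokes estimate applies, the pressure-like term $(\mu+\lambda)\tilde{\rho}^{-1}\nabla(\tilde{\rho}^{-1}\frac{dq}{dt})$ together with the divergence constraint produces the $\|\chi_l\bar{\partial}^{k}\frac{dq}{dt}\|_{m+1}^2$ contribution, and all commutators with $[\bar{\partial}^{k},\partial_{x_i}]$, with the cutoff, and with the smooth coefficients are one order lower and are absorbed into $\|Du\|_{k+m}^2$ and $\|Dq\|_{k+m-1}^2$ (using Poincar\'e and the Neumann elliptic estimate for the $q$ and $\phi$ pieces). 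The only technical point worth recording is that a fully rigorous localization uses a second cutoff equal to $1$ on the support of $\chi_l$, or absorbs the resulting discrepancy into the lower-order global norms; this is standard and does not affect the stated bound.
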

\vspace{1mm}

Now we are ready to prove Proposition  \ref{prop} by the following steps.
\begin{proof} [\textbf{Proof of Proposition \ref{prop}}]

{\bf Step 1.}  Adding the results of Lemma \ref{lemma1}, Lemma \ref{lemma3},  (\ref{lem6-2}), (\ref{lemma7}),
and (\ref{lemma8}) with some small suitable constants, then one obtains that
\begin{equation}\label{step1}
\begin{split}
\frac{1}{2}&\frac{d}{dt}\Big\{\int_{\Omega}\Big(\rho|u|^2+\gamma\tilde{\rho}^{\gamma-2}q^2+|\nabla\phi|^2\Big)dx
+\int_{\Omega}\Big(\mu|\nabla u|^2+(\mu+\lambda)|\mbox{div} u|^2\Big)dx
+\gamma\int_{\Omega}\chi_0^2\tilde{\rho}^{\gamma-4}|\nabla q|^2dx\\[2mm]
&+\frac{\gamma}{2}\int_{\Omega}\chi_{l}^2\tilde\rho^{\gamma-2}|\bar{\partial}q|^2dx
+\int_{\Omega}\chi_{l}^2\tilde{\rho}|\bar{\partial} u|^2dx +\int_{\Omega}\chi_{l}^2|\bar{\partial}\nabla\phi|^2dx
+\frac{\gamma}{2}\int_{\Omega}\chi_{l}^2\tilde\rho^{\gamma-4}|q_r|^2dx
\Big\}\\[2mm]
&-\frac{d}{dt}\Big\{\int_{\Omega}\tilde{\rho}\nabla\phi\cdot u dx+\int_{\Omega}q u\cdot\nabla\tilde{\Phi}dx
+\int_{\Omega}\gamma\tilde{\rho}^{\gamma-1}q\mbox{div}udx
 -\int_{\Omega}\chi_0^2\tilde{\rho}^{-2}q\nabla q\cdot\nabla(\gamma\tilde{\rho}^{\gamma-2}) dx\\[2mm]
&+\int_{\Omega}\chi_0^2\tilde{\rho}^{-2} \nabla q \cdot \nabla\phi dx
 -\int_{\Omega}\chi_{l}^2\bar{\partial}(\gamma\tilde{\rho}^{\gamma-2}) q \bar{\partial} q  dx
+\int_{\Omega}\chi_{l}^2\tilde\rho^{-2}(\gamma\tilde\rho^{\gamma-2})_{r}q_{r}q dx\\[2mm]
&+\int_{\Omega}\chi_{l}^2\tilde\rho^{-2}q_{r}\phi_{r}dx\Big\}+C\left\{
\|D u\|^2+\|q_t\|^2+\|u_t\|^2+\left\|\frac{dq}{dt}\right\|^2_{1}\right\}\\[2mm]
\leq& \epsilon (\|q\|^2+\|\nabla q\|^2)+C\delta\mathcal{D}(t).
\end{split}
\end{equation}
Further, utilizing Lemma \ref{lemmstoke} with $k=0$, Lemma \ref{lemma5} and Poincar$\acute{e}$'s inequality, we have
\begin{equation*}
\begin{split}
&\|q\|^2+\|\nabla q\|^2+\|\nabla\phi\|^2\leq C\left\{\|D^{2}u\|^2+\|D(\gamma\tilde\rho^{\gamma-2}q-\phi)\|^2+\delta\mathcal{D}(t)\right\}\\[2mm]
&\leq C\left\{\left\|\frac{dq}{dt}\right\|^2_{1}+\|u\|^2_{1}+\|u_t\|^2+\delta\mathcal{D}(t)\right\}\\[2mm]
&\leq C\left\{\left\|\frac{dq}{dt}\right\|^2_{1}+\|\nabla u\|^2+\|u_t\|^2+\delta\mathcal{D}(t)\right\}.
\end{split}
\end{equation*}
Denoting the time derivative  of (\ref{step1}) by  $\frac{d}{dt}H_{1}(t)+\frac{d}{dt}F_1(t)$, then for $\epsilon$ small enough, (\ref{step1})  gives
\begin{equation}\label{step1-1}
\begin{split}
\frac{d}{dt}\Big(H_{1}(t)+F_1(t)\Big)
+C\left\{\|q\|_{1}^2+\|q_t\|_{1}^2+\|\nabla\phi\|^2+
\|D u\|_{1}^2+\|u_t\|^2+\left\|\frac{dq}{dt}\right\|^2_{1}\right\}\leq C\delta\mathcal{D}(t).
\end{split}
\end{equation}

{\bf Step 2.} In view of Lemma \ref{lemma2}, Lemma \ref {lemma6}, Lemma \ref{lem7} and  Lemma \ref{lem8} with $k=1$, $m=0$, one has

\begin{equation}\label{step2}
\begin{split}
\frac{1}{2}&\frac{d}{dt}\Big\{\int_{\Omega}\Big(\rho|u_t|^2+\gamma\tilde{\rho}^{\gamma-2}q_t^2+|D\phi_t|^2\Big)dx+
\gamma\int_{\Omega}\chi_0^2\tilde{\rho}^{\gamma-4}|D^{2} q|^2dx\\[2mm]
&+\gamma\int_{\Omega}\chi_{l}^2\tilde\rho^{\gamma-2}|\bar{\partial}^2q|^2dx
+\int_{\Omega}\chi_{l}^2\tilde{\rho}|\bar{\partial}^2 u|^2dx +\int_{\Omega}\chi_{l}^2|\bar{\partial}^2D\phi|^2dx
+\gamma\int_{\Omega}\chi_{l}^2\tilde\rho^{\gamma-4}|\bar{\partial}q_r|^2dx\Big\}\\[2mm]
&+\frac{d}{dt}\Big\{
-\int_{\Omega}\chi_0^2\tilde{\rho}^{-2} D_{ij}q \cdot D_{ij} \phi dx
+2\int_{\Omega}\chi_0^2\tilde{\rho}^{-2}D_{i}(\gamma\tilde{\rho}^{\gamma-2})\cdot D_{ij}q D_{j}q dx
+\int_{\Omega}\chi_0^2\tilde{\rho}^{-2}D_{ij}(\gamma\tilde{\rho}^{\gamma-2})D_{ij}q q dx\\[2mm]
&+2\int_{\Omega}\chi_{l}^2\bar{\partial}(\gamma\tilde{\rho}^{\gamma-2}) \bar{\partial}q \bar{\partial}^2 q  dx
+\int_{\Omega}\chi_{l}^2\bar{\partial}^2(\gamma\tilde{\rho}^{\gamma-2}) q \bar{\partial}^2 q  dx
-\int_{\Omega}\chi_{l}^2\tilde\rho^{-2}\bar{\partial}q_{r}\bar{\partial}\phi_{r}dx\\[2mm]
&+\int_{\Omega}\chi_{l}^2 \tilde\rho^{-2}\bar{\partial}q_{r}
\Big(\bar{\partial}(\gamma\tilde\rho^{\gamma-2})q_{r}+ (\gamma\tilde\rho^{\gamma-2})_r \bar{\partial}q
+\bar{\partial}(\gamma\tilde\rho^{\gamma-2})_rqdx\Big) \Big\}\\[3mm]
&+C\Big\{\|D u_t\|^2+\left\|\left(\frac{dq}{dt}\right)_t\right\|^2
+\|\chi_0D^{2}(\gamma\tilde{\rho}^{\gamma-2}q-\phi)\|^2+\|\chi_0D^3u\|^2
+\left\|\chi_0 D^2\frac{dq}{dt}\right\|^2\\[2mm]
&+\|\chi_l \bar{\partial}^2 D u\|^2+\|\chi_l\bar{\partial}^2 \frac{dq}{dt}\|^2
+\|\chi_l\bar{\partial}(\gamma \tilde\rho^{\gamma-2}q-\phi)_{r}\|^2
+\left\|\chi_l\bar{\partial}\left(\frac{dq}{dt}\right)_{r}\right\|^2\Big\}\\[2mm]
\leq&\epsilon \|D^2 q\|^2+C\Big\{\|q\|_{1}^2+\|q_t\|_1^2+\|Du\|_{1}^2+\| Du_{t}\|^2
+\left\|\left(\frac{dq}{dt}\right)_{r}\right\|^2
+\|\chi_{l}\bar{\partial}^2Du\|^2
\Big\}
+C\delta\mathcal{D}(t).
\end{split}
\end{equation}
Denoting the time derivative  of (\ref{step2}) by  $\frac{d}{dt}H_{2}(t)+\frac{d}{dt}F_2(t)$, then (\ref{step2}) and (\ref{step1-1})  infer that
\begin{equation}\label{step2-1}
\begin{split}
\frac{d}{dt}&\Big\{H_{1}(t)+F_1(t)+\eta_2H_2(t)+\eta_2F_{2}(t)\Big\}
+C\Big\{\|q\|_{1}^2+\|q_t\|_{1}^2+\|\nabla\phi\|^2+
\|D u\|_{1}^2+\|u_t\|_{1}^2\\[2mm]
&+\left\|\frac{dq}{dt}\right\|^2_{1}+\left\|\left(\frac{dq}{dt}\right)_t\right\|^2
+\|\chi_0D^{2}(\gamma\tilde{\rho}^{\gamma-2}q-\phi)\|^2+\|\chi_0D^3u\|^2
+\left\|\chi_0 D^2\frac{dq}{dt}\right\|^2\\[2mm]
&+\|\chi_l \bar{\partial}^2 D u\|^2+\|\chi_l\bar{\partial}^2 \frac{dq}{dt}\|^2
+\|\chi_l\bar{\partial}(\gamma \tilde\rho^{\gamma-2}q-\phi)_{r}\|^2
+\left\|\chi_l\bar{\partial}\left(\frac{dq}{dt}\right)_{r}\right\|^2\Big\}\\[2mm]
\leq& \epsilon \|D^2 q\|^2+C\delta\mathcal{D}(t).
\end{split}
\end{equation}

{\bf Step 3.} Lemma \ref{lem8} with $k=0$, $m=1$, i.e. (\ref{h3}) tells that

\begin{equation}\label{step3}
\begin{split}
&\frac{d}{dt}\left(H_{3}(t)+F_3(t)\right)+C\Big\{\|\chi_l(\gamma \tilde\rho^{\gamma-2}q-\phi)_{rr}\|^2
+\left\|\chi_l\left(\frac{dq}{dt}\right)_{rr}\right\|^2\Big\}\\[2mm]
&\leq\epsilon\|D^2q\|^2+C\Big\{\left\|\left(\frac{dq}{dt}\right)_{r}\right\|^2+\|q\|_{1}^2+\|q_t\|_1^2
+\|Du_t\|^2+\|Du\|_{1}^2
+\|\chi_{l}\partial_{r}\bar{\partial}Du\|^2\Big\}+C\delta\mathcal{D}(t)\\[2mm]
&\leq\epsilon\|D^2q\|^2+C\Big\{\left\|\left(\frac{dq}{dt}\right)_{r}\right\|^2+\|q\|_{1}^2+\|q_t\|_1^2
+\|Du_t\|^2+\|Du\|_{1}^2\Big\}+C\left\|\chi_l\bar{\partial}\frac{dq}{dt}\right\|^2_{1}+
C\delta\mathcal{D}(t)\\[2mm]
\end{split}
\end{equation}
where we have used Lemma \ref{lem10} with $k=1$, $m=0$ in the last inequality.
Then (\ref{step3}) and (\ref{step2-1}) imply that, for $\eta_3$ small, it holds
\begin{equation*}
\begin{split}
\frac{d}{dt}&\Big\{H_{1}(t)+F_1(t)+\eta_2H_2(t)+\eta_2F_{2}(t)+\eta_3H_3(t)+\eta_3F_3(t)\Big\}\\[2mm]
&+C\Big\{\|q\|_{1}^2+\|q_t\|_{1}^2+\|\nabla\phi\|^2+
\|D u\|_{1}^2+\|u_t\|_{1}^2+\left\|\frac{dq}{dt}\right\|^2_{1}\\[2mm]
&+\left\|\left(\frac{dq}{dt}\right)_t\right\|^2
+\|\chi_0D^{2}(\gamma\tilde{\rho}^{\gamma-2}q-\phi)\|^2+\|\chi_0D^3u\|^2
+\left\|\chi_0 D^2\frac{dq}{dt}\right\|^2\\[2mm]
&+\|\chi_l \bar{\partial}^2 D u\|^2+\left\|\chi_l\bar{\partial}^2 \frac{dq}{dt}\right\|^2
+\|\chi_l\bar{\partial}(\gamma \tilde\rho^{\gamma-2}q-\phi)_{r}\|^2
+\left\|\chi_l\bar{\partial}\left(\frac{dq}{dt}\right)_{r}\right\|^2\\[2mm]
&+\|\chi_l(\gamma \tilde\rho^{\gamma-2}q-\phi)_{rr}\|^2
+\left\|\chi_l\left(\frac{dq}{dt}\right)_{rr}\right\|^2\Big\}\\[2mm]
\leq &\epsilon \|D^2 q\|^2+C\delta\mathcal{D}(t),
\end{split}
\end{equation*}
that is
\begin{equation}\label{step3-1}
\begin{split}
\frac{d}{dt}&\Big\{H_{1}(t)+F_1(t)+\eta_2H_2(t)+\eta_2F_{2}(t)+\eta_3H_3(t)+\eta_3F_3(t)\Big\}\\[2mm]
&+C\Big\{\|q\|_{1}^2+\|q_t\|_{1}^2+\|\nabla\phi\|^2+
\|D u\|_{1}^2+\|u_t\|_{1}^2+\left\|\frac{dq}{dt}\right\|^2_{2}
+\left\|\left(\frac{dq}{dt}\right)_t\right\|^2
\Big\}\\[2mm]
\leq &\epsilon \|D^2 q\|^2+C\delta\mathcal{D}(t).
\end{split}
\end{equation}

On the other hand, using Lemma \ref{lemmstoke} with $k=1$, one has
\begin{equation*}
\|D^{3}u\|^2+\|D^{2}(\gamma\tilde\rho^{\gamma-2}q-\phi)\|^2\leq C\Big\{\left\|\frac{dq}{dt}\right\|_{2}^2+\|u\|_{2}^2+\|g^0\|_{2}^2+\|u_t\|_{1}^2+\|g\|_{1}^2\Big\},
\end{equation*}
which together with the elliptic estimate $\|D^2\phi\|^2\leq C\|q\|^2$
gives
\begin{equation*}
\begin{split}
\|D^2q\|^2&\leq C\left\{\|D^{2}(\gamma\tilde\rho^{\gamma-2}q)\|^2+\|q\|_1^2\right\}
\leq\left\{\|D^{2}(\gamma\tilde\rho^{\gamma-2}q-\phi)\|^2+\|q\|_1^2\right\}\\[2mm]
&\leq C\Big\{\left\|\frac{dq}{dt}\right\|_{2}^2+\|u\|_{2}^2+\|u_t\|_{1}^2+\|q\|_1^2\Big\}+C\delta\mathcal{D}(t),
\end{split}
\end{equation*}
Therefore, for $\epsilon$ small enough, (\ref{step3-1}) becomes
\begin{equation}\label{step3-2}
\begin{split}
\frac{d}{dt}&\Big\{H_{1}(t)+F_1(t)+\eta_2H_2(t)+\eta_2F_{2}(t)+\eta_3H_3(t)+\eta_3F_3(t)\Big\}\\[2mm]
&+C\Big\{\|q\|_{2}^2+\|q_t\|_{1}^2+\|\nabla\phi\|^2+
\|D u\|_{2}^2+\|u_t\|_{1}^2+\left\|\frac{dq}{dt}\right\|^2_{2}
+\left\|\left(\frac{dq}{dt}\right)_t\right\|^2
\Big\}\\[2mm]
\leq& C\delta\mathcal{D}(t).
\end{split}
\end{equation}

{\bf Step 4.} Adding the results on Lemma \ref {lemma6}, Lemma \ref{lem7} and  Lemma \ref{lem8}
with $k=2$, $m=0$, then for $\eta$ small, it holds that
\begin{equation}\label{step4}
\begin{split}
\frac{1}{2}&\frac{d}{dt}\Big\{
\int_{\Omega}\Big(\mu|D u_t|^2+(\mu+\lambda)|\mbox{div}u_t|^2\Big)dx
+\gamma\int_{\Omega}\chi_0^2\tilde{\rho}^{\gamma-4}|D^{3} q|^2dx\\[2mm]
&+\gamma\int_{\Omega}\chi_{l}^2\tilde\rho^{\gamma-2}|\bar{\partial}^3q|^2dx
+\int_{\Omega}\chi_{l}^2\tilde{\rho}|\bar{\partial}^3 u|^2dx +\int_{\Omega}\chi_{l}^2|\bar{\partial}^3D\phi|^2dx
+\eta\gamma\int_{\Omega}\chi_{l}^2\tilde\rho^{\gamma-4}|\bar{\partial}^{2}q_r|^2dx
\Big\}\\[2mm]
&+\frac{d}{dt}\Big\{-\int_{\Omega}\gamma\tilde{\rho}^{\gamma-1}q_t\mbox{div}u_tdx
-\int_{\Omega}\chi_0^2\tilde{\rho}^{-2} D_{ijk}  q \cdot D_{ijk} \phi dx\\
&+3\int_{\Omega}\chi_0^2\tilde{\rho}^{-2}D_{i}(\gamma\tilde{\rho}^{\gamma-2})\cdot D_{ijk}q D_{jk}q dx
+3\int_{\Omega}\chi_0^2\tilde{\rho}^{-2}D_{ij}(\gamma\tilde{\rho}^{\gamma-2})D_{ijk}q D_{k}q dx\\
&+\int_{\Omega}\chi_0^2\tilde{\rho}^{-2}D_{ijk}(\gamma\tilde{\rho}^{\gamma-2})D_{ijk}q q dx
+3\int_{\Omega}\chi_{l}^2\bar{\partial}(\gamma\tilde{\rho}^{\gamma-2}) \bar{\partial}^2q \bar{\partial}^3 q  dx\\
&+3\int_{\Omega}\chi_{l}^2\bar{\partial}^2(\gamma\tilde{\rho}^{\gamma-2})\bar{\partial} q \bar{\partial}^3 q  dx
+\int_{\Omega}\chi_{l}^2\bar{\partial}^3(\gamma\tilde{\rho}^{\gamma-2}) q \bar{\partial}^3 q  dx\\[2mm]
&-\int_{\Omega}\chi_{l}^2\tilde\rho^{-2}\bar{\partial}^{k}\partial_{r}^{m}q_{r}\bar{\partial}^{k}\partial_{r}^{m}\phi_{r}dx
-\int_{\Omega}\chi_{l}^2 \tilde\rho^{-2}\bar{\partial}^{2}q_{r}\Big(
(\gamma\tilde\rho^{\gamma-2})_r\bar{\partial}^{2}q
+2\bar{\partial}(\gamma\tilde{\rho}^{\gamma-2})\bar{\partial}q_{r}\\[2mm]
&+\bar{\partial}^{2}(\gamma\tilde\rho^{\gamma-2})q_{r}+\bar{\partial}(\gamma\rho^{\gamma-2})_{r}\bar{\partial}q
+\bar{\partial}^{2}(\gamma\tilde\rho^{\gamma-2})_rqdx\Big)dx
\Big\}\\[2mm]
&+C\Big\{ \|q_{tt}\|^2+\|u_{tt}\|^2+
\|\chi_0D^{3}(\gamma\tilde{\rho}^{\gamma-2}q-\phi)\|^2
+\|\chi_0D^4u\|^2+\left\|\chi_0D^{3}\frac{dq}{dt}\right\|^2\\[2mm]
&+\|\chi_l \bar{\partial}^3 D u\|^2+\|\chi_l\bar{\partial}^3 \frac{dq}{dt}\|^2
+\|\chi_l\bar{\partial}^{2} (\gamma \tilde\rho^{\gamma-2}q-\phi)_{r}\|^2
+\left\|\chi_l\bar{\partial}^{2}\left(\frac{dq}{dt}\right)_{r}\right\|^2
\Big\}\\[2mm]
\leq& \epsilon \|D^3 q\|^2+C\Big\{\|Du\|_{2}^2+\|u_t\|_{2}^2+\|q\|_{2}^2+\|q_t\|_2^2+\left\|\left(\frac{dq}{dt}\right)_{r}\right\|_1^2
\Big\}+C\delta\mathcal{D}(t).
\end{split}
\end{equation}
Denoting the $t-$ derivative  of (\ref{step4}) by  $\frac{d}{dt}H_{4}(t)+\frac{d}{dt}F_4(t)$, then putting (\ref{step4}) together with (\ref{step3-2}), it implies that, for $\eta_4$ small,
\begin{equation}\label{step4-1}
\begin{split}
\frac{d}{dt}&\Big\{H_{1}(t)+F_1(t)+\sum_{i=2}^{4}\left(\eta_{i}H_{i}(t)+\eta_{i}F_{i}(t)\right)\Big\}\\[2mm]
&+C\Big\{\|q\|_{2}^2+\|q_t\|_{1}^2+\|D\phi\|^2+
\|D u\|_{2}^2+\|u_t\|_{1}^2+\|q_{tt}\|^2+\|u_{tt}\|^2+\left\|\frac{dq}{dt}\right\|^2_{2}
+\left\|\left(\frac{dq}{dt}\right)_t\right\|^2
\Big\}\\[2mm]
&+\|\chi_0D^{3}(\gamma\tilde{\rho}^{\gamma-2}q-\phi)\|^2
+\|\chi_0D^4u\|^2+\left\|\chi_0D^{3}\frac{dq}{dt}\right\|^2
+\|\chi_l \bar{\partial}^3 D u\|^2+\|\chi_l\bar{\partial}^3 \frac{dq}{dt}\|^2\\[2mm]
&+\|\chi_l\bar{\partial}^{2} (\gamma \tilde\rho^{\gamma-2}q-\phi)_{r}\|^2
+\left\|\chi_l\bar{\partial}^{2}\left(\frac{dq}{dt}\right)_{r}\right\|^2
\Big\}\\[2mm]
\leq& \epsilon \|D^3 q\|^2+C\delta\mathcal{D}(t).
\end{split}
\end{equation}

{\bf Step 5.} Lemma \ref{lem8} with $k=1$,  $m=1$ implies that
\begin{equation}\label{step5}
\begin{split}
&\frac{d}{dt}\left(H_5(t)+F_5(t)\right)
+C\Big\{\|\chi_l\bar{\partial}\partial_{r} (\gamma \tilde\rho^{\gamma-2}q-\phi)_{r}\|^2
+\left\|\chi_l\bar{\partial}\partial_{r}\left(\frac{dq}{dt}\right)_{r}\right\|^2\Big\}\\[2mm]
&\leq\epsilon\|D^3q\|^2+C\left(\left\|\left(\frac{dq}{dt}\right)_{r}\right\|_1^2+\|q\|_2^2+\|q_t\|_2^2+\|D^2u_t\|^2+\|Du\|_{2}^2
+\|\chi_{l}\bar{\partial}^2\partial_{r}Du\|^2\right)+C\delta\mathcal{D}(t)\\[2mm]
&\leq \epsilon\|D^3q\|^2+C\Big\{\left\|\left(\frac{dq}{dt}\right)_{r}\right\|_1^2+\|q\|_2^2+\|q_t\|_2^2+\|u_t\|_2^2+\|Du\|_{2}^2
+\left\|\chi_l\bar{\partial}^{2}\frac{dq}{dt}\right\|^2_{1}\Big\}+C\delta\mathcal{D}(t),
\end{split}
\end{equation}
where we have used Lemma \ref{lem10} with $k=2$, $m=0$.
Adding (\ref{step5}) and (\ref{step4-1}) implies that, for $\eta_5$ small, it holds
\begin{equation}\label{step5-1}
\begin{split}
\frac{d}{dt}&\Big\{H_{1}(t)+F_1(t)+\sum_{i=2}^{5}\left(\eta_iH_i(t)+\eta_iF_{i}(t)\right)\Big\}\\[2mm]
&+C\Big\{\|q\|_{2}^2+\|q_t\|_{1}^2+\|D\phi\|^2+
\|D u\|_{2}^2+\|u_t\|_{1}^2+\|q_{tt}\|^2+\|u_{tt}\|^2+\left\|\frac{dq}{dt}\right\|^2_{2}
+\left\|\left(\frac{dq}{dt}\right)_t\right\|^2\\[2mm]
&+\|\chi_0D^{3}(\gamma\tilde{\rho}^{\gamma-2}q-\phi)\|^2
+\|\chi_0D^4u\|^2+\left\|\chi_0D^{3}\frac{dq}{dt}\right\|^2
+\|\chi_l \bar{\partial}^3 D u\|^2+\left\|\chi_l\bar{\partial}^3 \frac{dq}{dt}\right\|^2\\[2mm]
&+\|\chi_l\bar{\partial}^{2} (\gamma \tilde\rho^{\gamma-2}q-\phi)_{r}\|^2
+\left\|\chi_l\bar{\partial}^{2}\left(\frac{dq}{dt}\right)_{r}\right\|^2
+\|\chi_l\bar{\partial} (\gamma \tilde\rho^{\gamma-2}q-\phi)_{rr}\|^2
+\left\|\chi_l\bar{\partial}\left(\frac{dq}{dt}\right)_{rr}\right\|^2
\Big\}\\[2mm]
\leq &\epsilon \|D^3 q\|^2+C\delta\mathcal{D}(t).
\end{split}
\end{equation}

{\bf Step 6.} It is obvious to see that
\begin{equation}\label{step6}
\begin{split}
&\frac{d}{dt}\left(H_{6}(t)+F_6(t)\right)
+C\Big\{\|\chi_l\partial_{r}^{3} (\gamma \tilde\rho^{\gamma-2}q-\phi)\|^2
+\left\|\chi_l\partial_{r}^{3}\left(\frac{dq}{dt}\right)\right\|^2\Big\}\\[2mm]
&\leq\epsilon\|D^3q\|^2+C\left(\left\|\left(\frac{dq}{dt}\right)_{r}\right\|_1^2+\|q\|_2^2+\|q_t\|_2^2+\|D^2u_t\|^2+\|Du\|_{2}^2
+\|\chi_{l}\partial_{r}^{2}\bar{\partial}Du\|^2\right)+C\delta\mathcal{D}(t)\\[2mm]
&\leq \epsilon\|D^3q\|^2+C\Big\{\left\|\left(\frac{dq}{dt}\right)_{r}\right\|_1^2+\|q\|_2^2+\|q_t\|_2^2+\|u_t\|_{2}^2+\|Du\|_{2}^2
+\left\|\chi_l\bar{\partial}^{2}\frac{dq}{dt}\right\|^2_{2}\Big\}+C\delta\mathcal{D}(t),
\end{split}
\end{equation}
from Lemma \ref{lem8} with $k=0$,  $m=2$, where we have used Lemma \ref{lem10} with $k=1$, $m=1$. Therefore, (\ref{step6}) and (\ref{step5-1}) imply that, for $\eta_6$ small, it holds
\begin{equation*}
\begin{split}
\frac{d}{dt}&\Big\{H_{1}(t)+F_1(t)+\sum_{i=2}^{6}(\eta_iH_i(t)+\eta_iF_{i}(t))\Big\}\\[2mm]
&+C\Big\{\|q\|_{2}^2+\|q_t\|_{1}^2+\|\nabla\phi\|^2+
\|D u\|_{2}^2+\|u_t\|_{1}^2+\|q_{tt}\|^2+\|u_{tt}\|^2+\left\|\frac{dq}{dt}\right\|^2_{2}
+\left\|\left(\frac{dq}{dt}\right)_t\right\|^2\\[2mm]
&+\|\chi_0D^{3}(\gamma\tilde{\rho}^{\gamma-2}q-\phi)\|^2
+\|\chi_0D^4u\|^2+\left\|\chi_0D^{3}\frac{dq}{dt}\right\|^2
+\|\chi_l \bar{\partial}^3 \nabla u\|^2+\left\|\chi_l\bar{\partial}^3 \frac{dq}{dt}\right\|^2\\[2mm]
&+\|\chi_l\bar{\partial}^{2} (\gamma \tilde\rho^{\gamma-2}q-\phi)_{r}\|^2
+\left\|\chi_l\bar{\partial}^{2}\left(\frac{dq}{dt}\right)_{r}\right\|^2
+\|\chi_l\bar{\partial} (\gamma \tilde\rho^{\gamma-2}q-\phi)_{rr}\|^2
+\left\|\chi_l\bar{\partial}\left(\frac{dq}{dt}\right)_{rr}\right\|^2\\[2mm]
&+\|\chi_l\partial_{r}^{3} (\gamma \tilde\rho^{\gamma-2}q-\phi)\|^2
+\left\|\chi_l\partial_{r}^{3}\left(\frac{dq}{dt}\right)\right\|^2
\Big\}\\[2mm]
\leq& \epsilon \|D^3 q\|^2+C\delta\mathcal{D}(t),
\end{split}
\end{equation*}
which yields that
\begin{equation}\label{1step6-1}
\begin{split}
\frac{d}{dt}&\Big\{H_{1}(t)+F_1(t)+\sum_{i=2}^{6}(\eta_iH_i(t)+\eta_iF_{i}(t))\Big\}\\[2mm]
&+C\Big\{\|q\|_{2}^2+\|q_t\|_{1}^2+\|\nabla\phi\|^2+
\|D u\|_{2}^2+\|u_t\|_{1}^2+\|q_{tt}\|^2+\|u_{tt}\|^2+\left\|\frac{dq}{dt}\right\|^2_{3}
+\left\|\left(\frac{dq}{dt}\right)_t\right\|^2\\[2mm]
&+\|\chi_0D^{3}(\gamma\tilde{\rho}^{\gamma-2}q-\phi)\|^2
+\|\chi_0D^4u\|^2
+\|\chi_l \bar{\partial}^3 \nabla u\|^2\\[2mm]
&+\|\chi_l\bar{\partial}^{2} (\gamma \tilde\rho^{\gamma-2}q-\phi)_{r}\|^2
+\|\chi_l\bar{\partial} (\gamma \tilde\rho^{\gamma-2}q-\phi)_{rr}\|^2
+\|\chi_l\partial_{r}^{3} (\gamma \tilde\rho^{\gamma-2}q-\phi)\|^2
\Big\}\\[2mm]
\leq& \epsilon \|D^3 q\|^2+C\delta\mathcal{D}(t).
\end{split}
\end{equation}
On the other hand, using Lemma \ref{lemmstoke} with $k=2$, one has
\begin{equation*}
\|D^{4}u\|^2+\|D^{3}(\gamma\tilde\rho^{\gamma-2}q-\phi)\|^2\leq C\Big\{\left\|\frac{dq}{dt}\right\|_{3}+\|u\|_{3}+\|g^0\|_{3}+\|u_t\|_{2}+\|g\|_{2}\Big\},
\end{equation*}
which together with the elliptic estimate $\|D^3\phi\|^2\leq C\|q\|_{1}^2$
gives
\begin{equation*}
\begin{split}
\|D^3q\|^2&\leq C\left\{\|D^{3}(\gamma\tilde\rho^{\gamma-2}q)\|^2+\|q\|_2^2\right\}
\leq\left\{\|D^{3}(\gamma\tilde\rho^{\gamma-2}q-\phi)\|^2+\|q\|_2^2\right\}\\[1mm]
&\leq C\Big\{\left\|\frac{dq}{dt}\right\|_{3}^2+\|u\|_{3}^2+\|u_t\|_{3}^2+\|q\|_{2}^2\Big\}+C\delta\mathcal{D}(t).
\end{split}
\end{equation*}
Therefore, for $\epsilon$ small enough, (\ref{1step6-1}) is controlled as
\begin{equation}\label{step6-1}
\begin{split}
\frac{d}{dt}&\Big\{H_{1}(t)+F_1(t)+\sum_{i=2}^{6}(\eta_iH_i(t)+\eta_iF_{i}(t))\Big\}\\[2mm]
&+C\Big\{\|q\|_{3}^2+\|q_t\|_{1}^2+\|\nabla\phi\|^2+
\|D u\|_{3}^2+\|u_t\|_{1}^2+\|q_{tt}\|^2+\|u_{tt}\|^2+\left\|\frac{dq}{dt}\right\|^2_{3}
\Big\}\leq C\delta\mathcal{D}(t).
\end{split}
\end{equation}

{\bf Step 7.} Let
\begin{equation*}
\widetilde{\mathcal{E}}(t)\equiv H_{1}(t)+F_1(t)+\sum_{i=2}^{6}\left(\eta_iH_i(t)+\eta_iF_{i}(t)\right),
\end{equation*}
and
\begin{equation*}
\widetilde{\mathcal{D}}(t)\equiv\|q\|_{3}^2+\|q_t\|_{1}^2+\|\nabla\phi\|^2+
\|D u\|_{3}^2+\|u_t\|_{1}^2+\|q_{tt}\|^2+\|u_{tt}\|^2+\left\|\frac{dq}{dt}\right\|^2_{3},
\end{equation*}
then we obtian
\begin{equation}\label{last3}
\frac{d}{dt}\widetilde{\mathcal{E}}(t)+C\widetilde{\mathcal{D}}(t)\leq  C\delta\mathcal{D}(t).
\end{equation}
Recalling the definitions of $\mathcal{E}(t)$, $\mathcal{D}(t)$, it is directly to see that
\begin{equation}\label{last4}
\mathcal{E}(t)\leq\mathcal{D}(t).
\end{equation}
Now we claim  that
\begin{equation}\label{last1}
\mathcal{D}(t)\leq C\widetilde{\mathcal{D}}(t)+C\delta\mathcal{D}(t),
\end{equation}
which implies for $\delta$ small, it holds
\begin{equation}\label{last2}
\mathcal{D}(t)\leq C\widetilde{\mathcal{D}}(t).
\end{equation}
Indeed, we can show that
\begin{equation*}
\|D^2q_{t}\|^2\leq C\Big\{\left\|D^{2}\frac{dq}{dt}\right\|^2+\|D^{2}(u\cdot\nabla q)\|^2\Big\}\leq C\widetilde{\mathcal{D}}(t)+C\delta\mathcal{D}(t).
\end{equation*}
and
\begin{equation*}
\begin{split}
\|D^{2}u_t\|^2&\leq C\Big\{\|u_{tt}\|^2+\|\nabla q_t\|^2+\|\nabla\phi_t\|^2+\| u_t\|^2+\|f_t\|^2+\|\nabla u_t\|^2\Big\}\\[2mm]
&\leq C\Big\{\|u_{tt}\|^2+\|\nabla q_t\|^2+\|u\|^2+\| u_t\|^2+\|f_t\|^2+\|\nabla u_t\|^2\Big\}\\[2mm]
&\leq C\widetilde{\mathcal{D}}(t)+C\delta\mathcal{D}(t).
\end{split}
\end{equation*}
Therefore, the claim (\ref{last1}) is proved.

On the other hand, H$\ddot{o}$lder's inequality implies that
\begin{equation}\label{last5}
C^{-1}\mathcal{E}_1(t)\leq \widetilde{\mathcal{E}}(t)\leq C\mathcal{E}_1(t).
\end{equation}
where
\begin{equation*}
\mathcal{E}_1(t)=\|q\|_{3}^2+\|q_t\|^2+\|u\|_1^2+\|u_t\|_1^2+\|D\phi\|^2.
\end{equation*}
Notice that Lemma \ref{lem-neu} yields that
\begin{equation*}
\|\nabla\phi\|_{4}^2\leq C\|q\|_{3}^2.
\end{equation*}
By using Lemma \ref{lemell}, one has
\begin{equation*}
\|D^2u\|^2+\|D^3u\|^2\leq C\mathcal{E}_1(t)+C\delta\mathcal{D}(t).
\end{equation*}
Then, in view of the  equation $(\ref{per1})_1$, we can obtain the estimate of $\|\nabla q_t\|^2$ and $\|D^{2}q_t\|^2$. Therefore,
\begin{equation*}
C^{-1}\mathcal{E}(t)\leq \mathcal{E}_1(t)\leq \mathcal{E}(t).
\end{equation*}
Then, putting (\ref{last4}), (\ref{last2})  and  (\ref{last5}) into (\ref{last3}), we obtain
\begin{equation*}
\frac{d}{dt}\widetilde{\mathcal{E}}(t)+\sigma\widetilde{\mathcal{E}}(t)\leq 0,
\end{equation*}
which gives
\begin{equation*}
\widetilde{\mathcal{E}}(t)\leq e^{-\sigma t}\widetilde{\mathcal{E}}(0).
\end{equation*}
Then
\begin{equation*}
\mathcal{E}(t)\leq C e^{-\sigma t}\mathcal{E}(0).
\end{equation*}
The proof is completed.
\end{proof}

\centerline{\bf Acknowledgements}
Liu's research is supported by National Natural Science
Foundation of China (No.11926418).
The authors are grateful to Professor Tao Luo for helpful suggestions and discussions.

\end{document}